\newenvironment{remark}[1][Remark]{\begin{trivlist}
\item[\hskip \labelsep {\bfseries #1}]}{\end{trivlist}}
\newtheorem{theorem}{Theorem}[section]
\newtheorem{lemma}[theorem]{Lemma}
\newtheorem{proposition}[theorem]{Proposition}
\newtheorem{corollary}[theorem]{Corollary}
\newtheorem{definition}[theorem]{Definition}
\newcommand{\Rn}{\mathbb{R}^n}
\newcommand{\defi}{\coloneqq}
\newcommand{\C}{\mathbb C}
\newcommand{\R}{\mathbb{R}}
\newcommand{\Z}{\mathbb{Z}}
\newcommand{\N}{\mathbb{N}}
\newcommand{\Sph}{\mathbb{S}^1}
\newcommand{\W}{\mathcal{W}}
\newcommand{\Szw}{\mathbb{S}^2}
\newcommand{\Quat}{\mathbb H}
\newcommand{\ord}{\operatorname{ord}}
\newcommand{\conf}{\operatorname{conf}}
\newcommand{\diam}{\operatorname{diam}}
\newcommand{\abs}[1]{\lvert#1\rvert}
\def\a#1{\left\llbracket{#1}\right\rrbracket}
\newcommand{\QR}[2]{{\left.\raisebox{.2em}{$#1$}\middle/\raisebox{-.2em}{$#2$}\right.}}
\title[Klein bottles]{Existence of Minimizing Willmore Klein bottles in euclidean four-space}
\author[P. Breuning]{Patrick Breuning}
\address[Patrick Breuning]{Pastor-Felke-Str. 1, 76131 Karlsruhe}
\email{pbreuning@yahoo.de}
\author[J. Hirsch]{Jonas Hirsch}
\address[Jonas Hirsch]{Karlsruhe Institute of Technology, Institute for Analysis, Englerstr. 2, 76131 Kalrsruhe, Germany}
\email{jonas.hirsch@kit.edu}
\author[E. M\"ader-Baumdicker]{Elena M\"ader-Baumdicker}
\address[Elena M\"ader-Baumdicker]{Karlsruhe Institute of Technology, Institute for Analysis, Englerstr. 2, 76131 Kalrsruhe, Germany}
\email{elena.maeder-baumdicker@kit.edu}
\begin{document}


\begin{abstract}
Let $K = \R P^2 \sharp \R P^2$ be a Klein bottle. 
We show that the infimum of the Willmore energy among all immersed Klein bottles $f:K\to\R^n$, $n\geq 4$, is attained by a smooth embedded Klein bottle. We know from \cite{Massey, Hirsch} that there are three distinct regular homotopy classes of immersions $f:K\to\R^4$ each one containing an embedding. One is characterized by the property that it contains the minimizer just mentioned.
 For the other two regular homotopy classes we show $\W(f)\geq 8\pi$.
We give a classification of the minimizers of these two regular homotopy classes. In particular, we prove the existence of infinitely many distinct  embedded Klein bottles in $\R^4$ that have Euler normal number $-4$ or $+4$ and Willmore energy $8\pi$. The surfaces are distinct even when we allow conformal transformations of $\R^4$.
As they are all minimizers in their regular homotopy class they are Willmore surfaces.
\end{abstract}

\maketitle

 \section{Introduction}
 
For a two-dimensional manifold $\Sigma$ immersed into $\R^n$ via $f:\Sigma \to\R^n$, the Willmore energy is defined as
\begin{align*}
 \W(f)\defi\frac{1}{4} \int \abs{H}^2 d \mu_g,
\end{align*}
where $H$ is the mean curvature vector of the immersed surface, i.e.\ the trace of the second fundamental form. Integration is due to the area measure with respect to the induced metric $g= f^\ast \delta_{\text{eucl}}$. 

In this paper, we consider closed non-orientable manifolds $\Sigma$ of (non-orientable) genus $p=1,2$, i.e.\ our surfaces are of the type of $\R P^2$ or $K\defi\R P^2 \sharp \R P^2$ (a Klein bottle). We are interested in the existence and the properties of immersions 
 $f:\Sigma\to\R^n$, $n\geq 4$, that are regularly homotopic to an embedding and that have low Willmore energy.

Concerning a lower bound on the Willmore energy, a result of Li and Yau is very useful for closed surfaces immersed into $\R^n$ (see~\cite{LiYau}): Let $x\in\R^n$ be a point and $\theta(x)\defi |\{z\in \Sigma: f(z) = x\}|$ the (finite) number of distinct pre-images of $x$. Then $$\W(f)\geq 4\pi \theta(x).$$ 
As any immersed $\R P^2$  in $\R^3$ has at least one triple point \cite{Banchoff} it follows that $\W(f)\geq 12 \pi$ for any such immersion. Equality holds for example for Boy's surface, see~\cite{Kusner}. 
Similarly, as an immersed Klein bottle in $ \R^3$ must have double points we have that $\W(f)\geq 8\pi$ for such immersions. Kusner conjectured that Lawson's minimal Klein bottle in $\mathbb S^3$ is (after inverse stereographic projection) the minimizer of the Willmore energy for all Klein bottles immersed into $\R^3$, see \cite{Kusner, Lawson}. This immersion has energy about $9.7\pi$.

Since any $m$-dimensional manifold can be embedded into $\R^{2m}$ (\cite{Whitney})  it is natural to ask what is known about $\R P^2$'s and Klein bottles immersed into $\R^4$. Li and Yau showed that $\W(f)\geq 6\pi$ for any immersed $\R P^2$ in $\R^4$, and equality holds if and only if the immersion is the Veronese embedding \cite{LiYau}. It turns out that the Veronese embedding and the reflected Veronese embedding are representatives of the only two distinct regular homotopy classes of immersions containing an embedding. The number of regular homotopy classes is due to Whitney and Massey \cite{Massey} and Hirsch \cite{Hirsch}, see Section~\ref{Section2}.

 As in the case of $\R P^2$ we can count the number of distinct regular homotopy classes of immersions of a Klein bottle containing an embedding. There are three of them. 
 By a gluing construction of Bauer and Kuwert there is a Klein bottle embedded in $\R^4$ with Willmore energy strictly less than $8\pi$, see \cite[Theorem~1.3]{BauerKuwert}. We repeat parts of this gluing construction in Section~\ref{Section3} and conclude that this gives a Klein bottle in the regular homotopy class characterized by Euler normal number zero. As we can add arbitrary dimensions this construction yields an embedded Klein bottle $f:K\to\R^n$, $n\geq 4$, with $W(f)<8\pi$.
 It follows that the infimum of the Willmore energy among all immersed Klein bottles is less than $8\pi$. E.~Kuwert and Y.~Li  proved in \cite{KuwertLi} a compactness theorem for so called $W^{2,2}$-conformal immersions and a theorem about the removability of point singularities. With these methods we prove that the infimum among immersed Klein bottles is attained by an embedding. We know that the minimizer is smooth by the work of T.~Rivi{\`e}re \cite{Riviere2, Riviere}. Note that T.~Rivi{\`e}re proved independently a compactness result similar to the one of Kuwert and Li mentioned above, see \cite[Theorem~III.1]{Riviere}.\\
 The existence of the minimizer among immersed Klein bottles gives a partial answer to a question that was stated by F.\ Marques and A.\ Neves in \cite[Section~4]{MarquesNeves}: They asked about the infimum of the Willmore energy in $\R^3$ or $\R^4$ among all non-orientable surfaces of a given genus or among all surfaces in a given regular homotopy class and they asked whether it is attained.  
  Here is the first existence result: 
  
 \begin{theorem} \label{thm0}
  Let $S$ be the class of all immersions $f:\Sigma \to\R^n$ where $\Sigma$ is a Klein bottle. Consider
 \begin{align*}
  \beta_2^n\defi \inf \left\{\W(f): f\in S\right\}.
 \end{align*}
Then we have that $\beta_2^n< 8\pi $ for $n\geq 4$. Furthermore, $\beta_2^n$ is attained by a smooth embedded Klein bottle for $n\geq 4$.
\end{theorem}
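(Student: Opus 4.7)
The plan is to prove the two claims of the theorem separately, using the Bauer--Kuwert gluing construction together with the compactness and regularity theory of Kuwert--Li and Rivi\`ere already cited in the introduction.

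For the strict inequality $\beta_2^n<8\pi$, I would invoke the Bauer--Kuwert gluing construction recalled in Section~\ref{Section3}: it furnishes an embedded Klein bottle $\tilde f:K\to\R^4$ with $\W(\tilde f)<8\pi$. For $n\geq 5$ one simply post-composes with the isometric inclusion $\R^4\hookrightarrow\R^n$, which leaves the Willmore energy unchanged. Hence $\beta_2^n<8\pi$ for every $n\geq 4$.

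For the attainment, I would begin with a minimizing sequence $f_j:K\to\R^n$ satisfying $\W(f_j)\to\beta_2^n$. By uniformization for non-orientable surfaces, I may precompose $f_j$ with a diffeomorphism so as to make it conformal with respect to a flat metric $g_j$ of unit area on $K$; after a translation I can also assume that $0\in f_j(K)$, and after a dilation that the diameter of the image is controlled. The conformal class $[g_j]$ then moves in the moduli space of flat Klein bottles. In my view the main obstacle is to rule out that $[g_j]$ escapes every compact subset of that moduli space. Degeneration in moduli would correspond, after rescaling, to pinching a simple closed curve on $K$; the bubble limits are then spheres or projective planes which must be reassembled into a Klein bottle by non-trivial identifications. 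Such identifications produce a point of multiplicity at least two, so the Li--Yau inequality $\W\geq 4\pi\theta$ forces the limit's total Willmore energy to be at least $8\pi$. By lower semicontinuity this contradicts $\liminf \W(f_j)=\beta_2^n<8\pi$. Consequently the conformal structures stay in a compact part of moduli space and, after passing to a subsequence, $g_j\to g_\infty$.

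With the conformal structure controlled, I would apply the Kuwert--Li compactness theorem for $W^{2,2}$-conformal immersions \cite{KuwertLi}. It yields, after extracting a further subsequence and excluding at most finitely many concentration points $p_1,\dots,p_N$, a weak $W^{2,2}_{loc}$-limit $f_\infty:K\setminus\{p_1,\dots,p_N\}\to\R^n$ which is a $W^{2,2}$-conformal immersion with $\W(f_\infty)\leq\beta_2^n$. The removability of point singularities of Kuwert--Li, and independently Rivi\`ere \cite{Riviere}, extends $f_\infty$ to a (possibly branched) $W^{2,2}$-conformal immersion on all of $K$. Because $\W(f_\infty)\leq\beta_2^n<8\pi$, the Li--Yau inequality forces $\theta(x)=1$ at every point, so $f_\infty$ is unbranched and injective, hence an embedded Klein bottle realizing the infimum. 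Smoothness of this Willmore minimizer finally follows from Rivi\`ere's regularity theory \cite{Riviere2, Riviere}.
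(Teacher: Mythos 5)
Your proposal follows the same overall strategy as the paper: Bauer--Kuwert gluing to get below $8\pi$, then Kuwert--Li/Rivi\`ere compactness and regularity, guarded by a non-degeneration-in-moduli lemma. The first half and the skeleton of the second are fine. The gap is in the non-degeneration step, which the paper isolates as Theorem~\ref{theo:divergimoduli} and proves at some length.

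You argue that a degenerating sequence pinches a curve on $K$, the bubbles are spheres or projective planes, the identification point has multiplicity at least two, and therefore Li--Yau forces the limiting configuration to have energy at least $8\pi$. This accounting is not correct in the projective-plane case. When the pinched curve is the separating curve in $K=\R P^2\#\R P^2$, the two bubbles are $\R P^2$'s. In the Kuwert--Li bubble picture one obtains the pre-pinched surface essentially by inverting one bubble at the joining point and implanting it into the other; by \cite[formula~(3.1)]{KuwertLi} the inversion at a point of multiplicity two \emph{subtracts} $8\pi$ from that bubble's energy rather than contributing a $+4\pi$ from a density bound. Here the $8\pi$ lower bound comes instead from the Li--Yau $6\pi$ bound for $\R P^2$'s in $\R^n$ \cite{LiYau}, combined with the inversion loss: on the oriented double cover, $\W \ge 12\pi + (12\pi - 8\pi) = 16\pi$, hence $\ge 8\pi$ for the Klein bottle. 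Your multiplicity shortcut would work for the other topological degeneration, a sphere carrying a double point (Case~1 in the paper, where $I(z)=-\bar z + \tau/2$ swaps the two halves of the collapsing torus), but not for the $\R P^2\vee\R P^2$ degeneration (Case~2, where $I(z)=\bar z + \tfrac12$), and one needs to distinguish these two cases in the first place. The paper does so via Theorem~\ref{thminvolution}, the classification of fixed-point-free antiholomorphic involutions on tori. One further technical point: the compactness, removable-singularity and Li--Yau monotonicity results in \cite{KuwertLi, Riviere} are formulated for orientable Riemann surfaces, so the whole argument has to be run on the oriented double cover $T^2\to K$ equipped with the involution $I$, with limits and branch orders tracked in $I$-invariant pairs; your proposal works directly on $K$ and does not address this.
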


We want to point out that the upper bound $\beta_2^n< 8 \pi$ can be improved. Let $\tilde{\tau}_{3,1}$ be the bipolar surface of Lawson's $\tau_{3,1}$-torus \cite{Lawson}. It is an embedded minimal Klein bottle in $\mathbb S^4$. After stereographic projection one obtains a Klein bottle $f: K \to \R^4$ with Willmore energy $\W(f)= 6 \pi \operatorname E \left(\frac{2\sqrt{2}}{3}\right)$ \cite{Lapointe}. Here, $\operatorname E(.)$ is the complete elliptic integral of second kind. We conclude that  $\beta_2^n \le  6 \pi \operatorname E \left(\frac{2\sqrt{2}}{3}\right) \approx 6.682 \pi< 8 \pi$. 
There is some indication that $\tilde{\tau}_{3,1}$ is the actual minimizer among immersed Klein bottles in $\R^4$, compare the forthcoming paper \cite{we}.\\

%
We will show in Section~\ref{Section2} that immersions in one of the other two regular homotopy classes of immersed Klein bottles in $\R^4$ satisfy $\W(f)\geq 8\pi$. There are minimizing representative embeddings $f_i: K\to\R^4$, $i=1,2$ with  Euler normal number $-4$ for $f_1$ and $+4$ for $f_2$ (for the definition of the Euler normal number, see Section~\ref{Section2}). 

We prove the following:
\begin{theorem} \label{thm1}
There is a one parameter family of smooth embedded Klein bottles $f^r_i:K \to\R^4$, $i=1,2$, $r\in\R^+$, with $W(f^r_i)=8\pi$ for $i=1,2$. The embeddings $f^r_1$ have Euler normal number $e(\nu)=-4$. The oriented double cover of the surfaces $\tilde f^r_1: M_r\to\R^4$ are conformal, where $M_r = \QR{\C}{\Gamma_r}$ is the torus generated by $(1,ir)$. Furthermore, $\tilde f^r_1$ are twistor holomorphic.
 The second embeddings $f^r_2$ are obtained by reflecting $f^r_1(K)$ in $\R^4$, and they have Euler normal number  $+4$. Every embedding $f^r_1,f^r_2$ is a minimizer of the Willmore energy in its regular homotopy class. Thus, all discovered surfaces are Willmore surfaces.

 For $r\neq r'$ the surfaces $f^r_1(K)$ and $f^{r'}_1(K)$ are different in the following sense: For all conformal transformations $\Phi$ of $\R^4$ we have  $f^r_1(K)\neq \Phi \circ f^{r'}_1(K)$ for $r\neq r'$.
 
 Furthermore, there is a classification (including a concrete formula) of immersed Klein bottles in $\R^4$ that satisfy $\W(f)= 8\pi$ and $|e(\nu)| = 4$.
\end{theorem}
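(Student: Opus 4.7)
The plan is to reduce the classification to known results on twistor holomorphic tori in $S^4$. Given an immersed Klein bottle $f:K\to\R^4$ with $\W(f)=8\pi$ and $|e(\nu)|=4$, I pass to the oriented double cover $\tilde f:M\to\R^4$, which is an immersed torus with $\W(\tilde f)=16\pi$ and $|e(\tilde\nu)|=8$. By the Wintgen-type bound $\W(\tilde f)\geq 2\pi\bigl(\chi(M)+|e(\tilde\nu)|\bigr)$ already used in Section~\ref{Section2}, equality must hold everywhere, so $\tilde f$ is superconformal. After stereographic projection $\R^4\hookrightarrow S^4$, such surfaces lift through the twistor fibration $\pi:\C P^3\to S^4$ to holomorphic curves $\tilde F:M\to\C P^3$, and the Willmore energy is $4\pi\deg(\tilde F)$. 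Hence $\deg(\tilde F)=4$.

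Next I would construct the family explicitly. For each $r>0$ set $M_r=\QR{\C}{(\Z\oplus ir\Z)}$ and exhibit a degree-four holomorphic map $\tilde F_r:M_r\to\C P^3$ built from theta- or $\wp$-function data, equivariant under a fixed-point-free antiholomorphic involution $\sigma_r$ whose quotient is the Klein bottle $K$. Because $\sigma_r$ lifts to an involution of $\C P^3$ covering an orientation-preserving isometry of $S^4$, the composition $\pi\circ\tilde F_r$ descends to a map $f_1^r:K\to S^4\subset\R^4$. The family $f_2^r$ is then obtained from $f_1^r$ by post-composition with an orientation-reversing isometry of $\R^4$, which flips the sign of the normal Euler number.

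I would then verify embeddedness using the Li--Yau multiplicity bound $\W\geq 4\pi\theta$ together with a Pl\"ucker-type analysis of $\tilde F_r$ to rule out residual self-intersections of the $\sigma_r$-quotient; compute $e(\nu)=-4$ from the degrees of the two natural projections of the twistor space (the twistor and anti-twistor components); and deduce conformal distinctness as follows. Any conformal transformation $\Phi$ of $\R^4$ lifts to a holomorphic automorphism of $\C P^3$, hence induces a biholomorphism of $M_r$, so the modulus $r$ is a conformal invariant of $f_1^r(K)$; distinct $r$ therefore give non-equivalent embeddings. Minimality within the regular homotopy class (and therefore the Willmore surface property) is immediate from the lower bound $\W\geq 8\pi$ in that class.

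The classification is the converse direction: every $f:K\to\R^4$ with $\W(f)=8\pi$ and $|e(\nu)|=4$ arises from the above twistor construction, and unwinding the theta-function representation gives the promised explicit formula. The main obstacle I anticipate is establishing embeddedness (and not merely immersedness) of every $f_i^r$ and showing that the whole one-parameter family sits inside a single regular homotopy class; a secondary technical point is identifying $\sigma_r$ and the curves $\tilde F_r$ explicitly enough that embeddedness can be read off directly and no hidden discrete parameters survive.
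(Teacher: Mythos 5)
Your overall plan --- pass to the double cover, use the Wintgen equality to obtain twistor holomorphicity, produce degree-four holomorphic twistor data equivariant under a fixed-point-free antiholomorphic involution, and read off minimality from the lower bound --- is the same strategy the paper follows; the paper works in Friedrich's twistor space $H\oplus H$ of $\R^4$ rather than $\C P^3$ over $S^4$, but those two pictures differ only by a conformal compactification. The problem is that the proposal leaves the load-bearing technical steps unfilled.

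Concretely: (a) You never establish that the double cover of $K$ is biholomorphic to a rectangular torus $M_r$ with involution normalized to $I(z)=\bar z + \tfrac{1}{2}$; that classification of fixed-point-free antiholomorphic involutions on tori (Section~\ref{Section1}) is what makes any equivariant construction tractable. (b) The existence of a degree-four meromorphic $g$ on $M_r$ with $g\circ I = -1/\bar g$ is the crux and is not automatic: the divisor of $g$ must satisfy a congruence modulo $\Gamma_r$, and one must verify via the Legendre relation and a parity condition on the pole positions that a certain multiplicative constant can be forced to equal $-1$ (Lemma~\ref{lemmaexg}); ``built from theta- or $\wp$-function data'' is a promissory note, not a construction. (c) The embeddedness claim is hand-waved; the paper's argument is a specific device (invert and reflect at a hypothetical double point, conclude the resulting $I$-equivariant superminimal immersion is locally a pair of (anti)holomorphic functions by Eisenhart and hence constant, contradiction) that one would not extract from a generic Pl\"ucker analysis. (d) The conformal-distinctness conclusion is too fast: $M_r$ and $M_{1/r}$ are biholomorphic, so merely knowing that $\Phi$ induces a biholomorphism of tori does not determine $r$; one must show that the induced map conjugates the involution $\bar z + \tfrac{1}{2}$ to itself, which forces it to be a real translation and only then yields $r_1=r_2$ (Corollary~\ref{cor:uniqueness of the conformal class}). (e) The converse classification direction is asserted, not derived; the paper proves it by showing the Weierstrass triple $(g,s_1,s_2)$ of any immersion with $\W=8\pi$, $|e(\nu)|=4$ necessarily coincides with one from the constructed family.
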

Our techniques can also be used for $\R P^2$'s with $W(f)=6\pi$. As such surface must be a conformal transformation of the Veronese embedding (\cite{LiYau}) we get an explicit formula for this surface:

\begin{proposition}
 Define $f:  \Szw \to\C^2 = \R^4$ by $f(z)= \left(\bar z \frac{\abs{z}^4 - 1}{\abs{z}^6 + 1}, z^2 \frac{\abs{z}^2 +1}{\abs{z}^6 + 1}\right)$. Then $f(\Szw)$ is the Veronese surface (up to conformal transformation of $\R^4$).
\end{proposition}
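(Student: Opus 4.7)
The plan is to invoke the Li--Yau rigidity result \cite{LiYau}: any immersion $g\colon \R P^2 \to \R^4$ with Willmore energy exactly $6\pi$ must coincide, up to a conformal transformation of $\R^4$, with the Veronese embedding. The task therefore reduces to verifying three properties of the map $f$ in the statement: that $f$ is a smooth immersion of $\Szw$, that $f$ descends from $\Szw$ to $\R P^2$, and that $\W(f)=6\pi$.

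The descent is a direct calculation. Identifying $\Szw$ with $\C\cup\{\infty\}$ so that the antipodal map becomes $z\mapsto -1/\bar z$, substitution and clearing by $|z|^6$ yields
\begin{align*}
 \overline{(-1/\bar z)}\,\frac{\abs{-1/\bar z}^4-1}{\abs{-1/\bar z}^6+1}
  &= \frac{\bar z\,(\abs{z}^4-1)}{\abs{z}^6+1},\\
 (-1/\bar z)^2\,\frac{\abs{-1/\bar z}^2+1}{\abs{-1/\bar z}^6+1}
  &= \frac{z^2\,(\abs{z}^2+1)}{\abs{z}^6+1},
\end{align*}
so $f(-1/\bar z)=f(z)$ and $f$ factors through the quotient $\Szw/\{\pm\mathrm{id}\}=\R P^2$. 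Smoothness at $z=\infty$ is checked in the chart $w=-1/\bar z$, in which $f$ becomes the same pole-free rational expression. For the immersion property I would compute the differentials $\partial_z f$ and $\partial_{\bar z} f$ explicitly; at the origin the first component contributes the nonvanishing linear term $-\bar z$, giving $df|_0$ real rank two, and the antipodal symmetry transports this to a neighbourhood of infinity. At generic points the two partial derivatives are seen to be $\C$-linearly independent because of the mismatched antiholomorphic and holomorphic degrees in the two components.

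The main obstacle is the evaluation $\W(f)=6\pi$. Rather than a brute-force second-fundamental-form computation, I would proceed in the spirit of the proof of Theorem~\ref{thm1}: the algebraic form of $f$---one antiholomorphic factor $\bar z$ and one holomorphic factor $z^2$, of combined degree two---is the hallmark of a twistor holomorphic surface in $\mathbb{S}^4$ of the same type analysed there. Concretely, I would lift $f$ to a holomorphic map $\Szw\to\C P^3$ whose composition with the twistor projection $\C P^3\to\mathbb{S}^4$ and the inverse stereographic projection $\mathbb{S}^4\to\R^4$ recovers $f$, and then use the degree-theoretic formula for the Willmore energy of twistor holomorphic immersions to extract the value $6\pi$. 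Once this is in place, Li--Yau's rigidity \cite{LiYau} identifies $f(\Szw)$ with the Veronese surface up to a conformal transformation of $\R^4$, which is exactly the claim of the proposition.
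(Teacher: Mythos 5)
Your overall strategy --- Li--Yau rigidity plus twistor theory --- is the same as the paper's, but the direction of the argument is reversed, and the reversal leaves a genuine gap at the step you yourself flag as ``the main obstacle.''  The paper proceeds forward: it writes down a Friedrich triple $(g,s_1,s_2)$ with $g=z^3$, $s_1=(z^2,z^{-1})$, $s_2=(z,z^{-2})$, checks the symmetry $g\circ I=-1/\bar g$ and \eqref{phicircI}, and then Theorem~\ref{weierstr} hands it everything at once: the explicit formula \eqref{weier} reproduces exactly the $f$ in the statement, the condition $\abs{ds_1}+\abs{ds_2}>0$ gives the immersion property, $f\circ I=f$ gives descent to $\R P^2$ for free, and $\W(f)=4\pi\deg(g)=12\pi$ falls out of \eqref{Willm} with no curvature computation.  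You instead take the formula as the starting point and propose to ``lift $f$ to a holomorphic map $\Szw\to\C P^3$ \ldots\ and then use the degree-theoretic formula,'' but that lift is never constructed, and finding it is precisely the content of identifying the triple $(g,s_1,s_2)$.  Until the lift and its degree are exhibited, the Willmore energy computation --- the one step on which Li--Yau rigidity hinges --- is not actually done.

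There are also two small but telling miscalculations.  You write that $f$ should satisfy $\W(f)=6\pi$; but $f$ is defined on $\Szw$, and only the descended map $\tilde f\colon\R P^2\to\R^4$ has energy $6\pi$, while $\W(f)=12\pi$ on the double cover.  The rigidity theorem of Li--Yau applies to the $\R P^2$-map, so the passage to the quotient must appear explicitly.  Second, your heuristic ``one antiholomorphic factor $\bar z$ and one holomorphic factor $z^2$, of combined degree two'' would feed a degree of $2$ into the twistor formula and yield the wrong energy; the relevant meromorphic function is $g=z^3$, of degree $3$, and $4\pi\cdot 3=12\pi$ is what descends to $6\pi$.  These are symptoms of the same gap: without actually performing the twistor lift you cannot read off the degree, and a wrong guess for the degree invalidates the whole chain.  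The direct checks you do carry out (descent under $z\mapsto-1/\bar z$, smoothness at $\infty$) are correct, and the immersion check at $0$ is fine in outline, but they are the easy parts of the argument.
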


 We give an overview of the structure of this paper. In Section~\ref{Section1} we prove that each torus carrying an antiholomorphic involution without fixpoints is biholomorphically equivalent to a torus $T$ with a rectangular lattice generated by $(1,\tau)$. On $T$, the involution has the form $I(z) = \bar z + \frac{1}{2}$ up to M\"obius transformations on $T$. Section~\ref{Section2} contains the proof in the non-orientable case of the so called ``Wintgen inequality'' which is $\W(f)\geq 2\pi (\chi + |e(\nu)|)$, see \cite{Wintgen}. We then give an introduction to the theory of twistor holomorphic immersions into $\R^4$ (see \cite{Friedrich}) and construct the surfaces of Theorem~\ref{thm1} with this theory. The same methods yield the formula for the Veronese embedding. 
 We explain in Section~\ref{Section3} that the gluing construction of Bauer and Kuwert \cite{BauerKuwert} gives an embedded Klein bottle $f:K\to\R^n$, $n\geq 4$, with Willmore energy strictly less that $8\pi$ (thus, with Euler normal number zero if $n=4$). This embedding is not in one of the regular homotopy classes of the embeddings of Theorem~\ref{thm1}. 
 After this, we show that a sequence of Klein bottles $f_k :K \to\R^n$ where the oriented double covers diverge in moduli space satisfies $$\liminf_{k\to\infty}W(f_k)\geq 8\pi.$$ We use this estimate together with techniques and results from \cite{KuwertLi, Riviere, Riviere2} to show Theorem~\ref{thm0}.
 
 \begin{remark}
  In $\R^3$, there is no immersed Klein bottle with Willmore energy $8\pi$. If it existed then we could invert at one of the double points in $\R^3$. We would get a complete minimal immersion in $\R^3$ with two ends.
  But due to \cite{Kusner} this surface must be embedded, a contradiction.  
 \end{remark}

\section*{Acknowledgment}
First of all we want to thank Ernst Kuwert for the initial idea. He posed the existence of Willmore minimizing Klein bottles as an open problem and proposed the gluing of two Veronese surfaces as a first step to obtain a competitor below $8\pi$. Furthermore, he gave us a sketch of the proof of Theorem~\ref{thm0}. Secondly we want to thank Tobias Lamm for drawing our attention to this problem and the helpful discussions.

%
 
 \section{Antiholomorphic involutions on the torus}\label{Section1}
 
Let $N$ be a non-orientable manifold of dimension two and $\tilde f: N\to\Rn$ ($n \geq 3$) an immersion. 
We equip $N$ with the induced Riemannian metric $\tilde f^\ast \delta_{\text{eucl}}$.
Consider $q:M\to N$, the conformal oriented 
two-sheeted cover of $N$, and define $f\defi \tilde f\circ q$. 
As every $2-$dimensional 
oriented manifold can be locally conformally reparametrized $M$ is a Riemann surface that is conformal to $(M, f^\ast \delta_{\text{eucl}})$.
Let $I:M\to M$ 
be the antiholomorphic order two deck transformation for $q$. The map $I$ is an 
antiholomorphic involution without fixpoints such that $ f\circ I = f$. 

Now consider the situation where $N$ is the Klein bottle, i.e.\ $N$ is compact, without boundary and has non-orientable genus two. In this case, the oriented two-sheeted cover $q: T^2 \to N$ lives on the two-dimensional torus $T^2$. 
It is the aim of this section to classify all antiholomorphic involutions without fixpoints on a torus $T^2$ up to M\"obius transformation. A M\"obius transformation is a biholomorphic map $\varphi: T^2\to T^2$. 
We use the fact that every torus is a quotient space $\QR{\C}{\Gamma}$ where $\Gamma$ is a lattice in $\C$, i.e.\
\begin{align*}
 \Gamma =\{m\omega + n \omega': m,n\in\mathbb Z\}
\end{align*}
where $\omega,\omega'\in \C=\mathbb R^2$ are vectors that are linearly independent over $\R$.  We call $(\omega,\omega')$ a \emph{generating pair} of $\Gamma$.

\begin{theorem} \label{thminvolution}
Consider a lattice $\Gamma$ in $\C$ generated by a pair $(1,\tau)$ where $\Im(\tau)>0$,  $-\frac{1}{2}<\Re(\tau)\leq \frac{1}{2}$ and $|\tau|\geq 1$.
Let $I: \QR{\C}{\Gamma} \to  \QR{\C}{\Gamma}$ be an antiholomorphic involution without fixpoints. Then $\Gamma$ must be a rectangular lattice, i.e.\ $\tau\in i\R^+$ and, up to M\"obius transformation, the induced doubly periodic map $\hat I: \C\to\C$ is of following form:
\begin{align*}
\text{Either } \ \hat I(z)=\bar z + \tfrac{1}{2} \ \text{ or  } \ \hat I(z)= -\bar z + \tfrac{\tau}{2}.
\end{align*}

%

\end{theorem}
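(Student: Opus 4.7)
The plan is to lift $I$ to the universal cover $\C$ and exploit the rigidity of antiholomorphic maps of tori. Any lift $\hat I : \C \to \C$ is antiholomorphic and satisfies $\hat I(z+\gamma) - \hat I(z) \in \Gamma$ for every $\gamma \in \Gamma$, so its complex conjugate $\overline{\hat I}$ is holomorphic with $\Gamma$-periodic derivative; by Liouville's theorem this derivative is constant, forcing $\hat I(z) = a\bar z + b$ with $a,b \in \C$. Descent to $\C/\Gamma$ demands $a\bar\gamma \in \Gamma$ for every $\gamma \in \Gamma$, i.e.\ $a \in \Gamma$ and $a\bar\tau \in \Gamma$; expanding $\hat I \circ \hat I(z) = |a|^2 z + a\bar b + b$ and requiring this to be the identity modulo $\Gamma$ for all $z$ yields $|a|=1$ and $a\bar b + b \in \Gamma$.

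Next I enumerate the candidates for $a$. A direct estimate on $|m+n\tau|^2 = m^2 + 2mn\Re\tau + n^2|\tau|^2$ combined with $|\tau|\geq 1$ and $-\tfrac12 < \Re\tau \leq \tfrac12$ shows that the unit lattice vectors are $a = \pm 1$ in general, together with $a = \pm\tau$ if $|\tau|=1$ and $a = \pm(\tau-1)$ if $\tau = e^{i\pi/3}$. Imposing $a\bar\tau \in \Gamma$ further restricts these; in particular, for $a = \pm 1$ it reads $\bar\tau \in \Gamma$, which forces $2\Re\tau \in \Z$ and hence $\Re\tau \in \{0, \tfrac12\}$.

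For each surviving pair $(a,\tau)$ I then check whether a fixpoint-free involution $b$ exists. The fixpoint locus on the torus is $b \in L + \Gamma$ where $L = \{z - a\bar z : z \in \C\}$ is a real one-dimensional subspace of $\C$; writing $b = u + v\tau$ with $u,v\in\R$ reduces both the involution condition $a\bar b + b \in \Gamma$ and the fixpoint condition $b \in L + \Gamma$ to explicit linear congruences on $u,v$. A case-by-case verification shows that whenever $(a,\tau)$ falls outside the regime $\{a = \pm 1,\, \Re\tau = 0\}$—namely, the rhombic case $\Re\tau = \tfrac12$ with $a = \pm 1$, the rhombic case $|\tau|=1$ with $a = \pm\tau$, and the hexagonal corner $\tau = e^{i\pi/3}$ with $a = \pm(\tau-1)$—the involution condition and the fixpoint condition coincide, so no fixpoint-free $b$ exists. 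This forces $\tau \in i\R^+$ and $a = \pm 1$.

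Finally I normalize $b$ by conjugating with a M\"obius translation $\varphi(z) = z + \beta$, which sends $\hat I$ to $z \mapsto a\bar z + b + \beta - a\bar\beta$. For $a = 1$ this shifts $b$ by $2i\Im\beta$, so I arrange $\Im b = 0$; the constraints $2\Re b \in \Z$ and $\Re b \notin \Z$ then give $\Re b \equiv \tfrac12 \pmod \Z$, yielding $\hat I(z) = \bar z + \tfrac12$. For $a = -1$ the shift becomes $2\Re\beta$, so one sets $\Re b = 0$ and the analogous analysis gives $\Im b \equiv \tfrac{\Im\tau}{2} \pmod{\Im\tau\cdot\Z}$, i.e.\ $\hat I(z) = -\bar z + \tfrac{\tau}{2}$. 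The hardest step is the case analysis ruling out the corner candidates in the Siegel domain; each requires identifying that the involution condition already places $b$ inside the fixpoint locus. A more conceptual alternative is to observe that $\langle\Gamma,\hat I\rangle$ is a Klein bottle Bieberbach group of $\R^2$, whose translation subgroup is generated by a translation perpendicular to the glide axis together with twice the glide vector, hence rectangular by construction.
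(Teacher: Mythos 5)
Your argument is structurally the same as the paper's. The paper proceeds through Lemmas~\ref{lemma1}--\ref{lemma7}: Lemma~\ref{lemma1} gets $\hat I(z)=a\bar z+b$ with $a\bar\Gamma=\Gamma$, $|a|=1$, $a\bar b+b\in\Gamma$ (via observing $\psi(z)=I(\bar z)$ is holomorphic and applying Liouville to $\psi'$); Lemmas~\ref{lemma3}--\ref{lemma5} rule out $a\notin\{\pm1\}$ by exhibiting fixpoints; Lemmas~\ref{lemma6a}, \ref{lemma6}, \ref{lemma7} then settle the two cases $a=\pm1$. You reproduce this with a few nice refinements. First, you obtain $|a|=1$ directly from $\hat I\circ\hat I=\operatorname{id}$ rather than the paper's detour through the holomorphic inverse. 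Second, you organize the fixpoint analysis around the one-dimensional subspace $L=\{z-a\bar z\}$; since $L$ is simultaneously the fixpoint locus, the kernel of $b\mapsto a\bar b+b$, and the set of shifts effected by translation conjugation, both the involution constraint and the fixpoint constraint descend to $\C/L$, and each corner case reduces to comparing two discrete subsets of a line. This replaces the paper's case-by-case translation normalizations (and avoids invoking the rotational M\"obius transformations of Lemma~\ref{lemma2} that the paper uses in the hexagonal case in Lemma~\ref{lemma4}) but is computationally equivalent. Finally, your Bieberbach-group remark is a genuinely different and cleaner route to the rectangularity of $\Gamma$: the group $\langle\Gamma,\hat I\rangle$ is the wallpaper group $pg$, whose translation lattice is generated by a vector orthogonal to the glide axis together with twice the glide vector and is therefore rectangular. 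Note, though, that this observation alone does not produce the two explicit normal forms $\bar z+\tfrac12$ and $-\bar z+\tfrac{\tau}{2}$; the normalization of $b$ is still needed for that part of the statement.
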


\begin{remark}
 \begin{enumerate}
 \item A similar result can be found in \cite[Appendix~F]{KusnerSchmitt}. For the sake of completeness we give a full proof of Theorem~\ref{thminvolution} in the following. The case that $\Gamma$ is a hexagonal lattice, i.e.\ generated by $(1,e^{i\frac{\pi}{3}})$, and $\alpha\Gamma = \bar \Gamma$ with $\alpha =e^{li\frac{\pi}{3}}$, $l=1,2,4,5$  is not considered in the proof of \cite{KusnerSchmitt}.
  \item The expression ``up to M\"obius transformation'' means that there is a M\"obius transformation $\varphi:  \QR{\C}{\Gamma} \to  \QR{\C}{\Gamma}$ such that $\varphi^{-1}\circ I \circ \varphi$ is of the claimed form. 
  When we have an antiholomorphic involution $I$ without fixpoints on a torus $\QR{\C}{\Gamma}$ then $\varphi^{-1}\circ I\circ\varphi$ is also an antiholomorphic involution without fixpoints on that torus. 
  Therefore, it only makes sense to classify such involutions up to M\"obius transformation.
  \item  Every map $I:\QR{\C}{\Gamma}\to \QR{\C}{\Gamma}$ induces a map $\hat I: \C\to\C$ that is doubly periodic with respect to $\Gamma$. From now on we denote $\hat I$ simply by $I$.
 \end{enumerate}

\end{remark}

We prove this theorem in several steps. But at first we explain how we come to the case of a general lattice.

\begin{proposition} \label{biholom}
Let $\Gamma$ be a lattice in $\C$. Then there exists a generating pair $(\omega,\omega')$ such that $\tau\defi\frac{\omega'}{\omega}$ satisfies $\Im(\tau)>0$, $-\frac{1}{2}<\Re(\tau)\leq \frac{1}{2}$, $|\tau|\geq 1$ and if $|\tau|=1$ then $\Re(\tau)\geq 0$. Let $\tilde\Gamma$ be the lattice generated by $(1,\tau)$. Then there exists a biholomorphic map $\varphi:\QR{\C}{\Gamma}\to \QR{\C}{\tilde\Gamma}$.
\end{proposition}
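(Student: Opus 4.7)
The plan is to reduce this to the classical construction of the fundamental domain for $\mathrm{SL}(2,\Z)$ acting on the upper half plane. First I would pick any generating pair $(\omega_0,\omega_0')$ of $\Gamma$ and, after swapping if needed, assume $\Im(\omega_0'/\omega_0)>0$. Observing that two generating pairs of the \emph{same} lattice differ by a matrix in $\mathrm{GL}(2,\Z)$, and that the positivity of the imaginary part of the ratio singles out exactly the subgroup $\mathrm{SL}(2,\Z)$, the change of basis on pairs corresponds to the standard M\"obius action $\tau\mapsto(a\tau+b)/(c\tau+d)$ of $\mathrm{SL}(2,\Z)$ on the upper half plane.

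Second, I would run the reduction algorithm using the generators $T:\tau\mapsto\tau+1$ and $S:\tau\mapsto-1/\tau$ of $\mathrm{SL}(2,\Z)$. Alternate the following two moves on $\tau$: use a suitable power of $T$ to move $\Re(\tau)$ into $(-\tfrac{1}{2},\tfrac{1}{2}]$, and apply $S$ whenever $|\tau|<1$. The crucial point is $\Im(S\tau)=\Im(\tau)/|\tau|^2>\Im(\tau)$ in the second case, so each $S$-step strictly increases the imaginary part. Since $\Gamma$ contains only finitely many elements of modulus below any fixed bound, the process must terminate, yielding $\tau$ with $-\tfrac{1}{2}<\Re(\tau)\leq\tfrac{1}{2}$ and $|\tau|\geq 1$. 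When $|\tau|=1$ and $\Re(\tau)<0$, one further application of $S$ sends $\tau$ to $-\bar\tau$, which again has modulus $1$ but non-negative real part, giving the required boundary normalization.

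For the biholomorphism, once the normalized $\tau$ and the associated generator $\omega$ are fixed, the scaling $z\mapsto z/\omega$ maps $\Gamma=\omega\Z+\omega'\Z$ bijectively onto $\tilde\Gamma=\Z+\tau\Z$. It is holomorphic, has the obvious holomorphic inverse $w\mapsto\omega w$, and therefore descends to a biholomorphism $\varphi:\QR{\C}{\Gamma}\to\QR{\C}{\tilde\Gamma}$.

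The hard part is really just the termination of the reduction procedure, which is the classical $\mathrm{SL}(2,\Z)$-reduction of lattice bases; the rest of the argument, from the change-of-basis correspondence to the scaling biholomorphism, is formal. If I wanted to avoid quoting the generation of $\mathrm{SL}(2,\Z)$ by $S$ and $T$, an alternative would be to choose $\omega$ to be a shortest nonzero vector of $\Gamma$, $\omega'$ a shortest vector not parallel to $\omega$, and then verify directly that $\tau=\omega'/\omega$ lies in the claimed domain by using $|\omega'|\geq|\omega|$ and $|\omega'\pm\omega|\geq|\omega'|$.
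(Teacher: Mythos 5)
Your proof is correct. You differ from the paper only in how the existence of the canonical basis is established: the paper defers to Ahlfors \cite[Chapt.~7, Theorem 2]{Ahlfors}, whose proof is the shortest-vector argument (pick $\omega$ a nonzero period of minimal modulus, then $\omega'$ a shortest period not a real multiple of $\omega$, and verify $\tau=\omega'/\omega$ lies in the claimed domain), whereas your main argument is the $\mathrm{SL}(2,\Z)$ reduction algorithm via the generators $S$ and $T$. Both routes are standard and equivalent in content; the reduction algorithm makes the modular-group structure explicit (which is nice context for the $\mathrm{M\ddot{o}bius}$-transformation bookkeeping in Lemmas~\ref{lemma1}--\ref{lemma7}), while the shortest-vector argument is shorter to check and is what the cited reference actually does — indeed the alternative you sketch at the end is precisely the Ahlfors proof. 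The one point worth tightening in your write-up is the termination claim: "finitely many elements of bounded modulus'' is the right discreteness input, but the conclusion needs the intermediate observation that $\Im\bigl((a\tau+b)/(c\tau+d)\bigr)=\Im(\tau)/\abs{c\tau+d}^2$ and that the values $\abs{c\tau+d}$, $(c,d)\neq(0,0)$, are bounded below and have only finitely many below any bound; this caps the imaginary part on the orbit, and since each $S$-step strictly increases it, the procedure halts. The scaling $z\mapsto z/\omega$ and its descent to the quotient is identical to the paper.
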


\begin{proof}
The pair $(\omega,\omega')$ is sometimes called ``canonical basis''. The proof of the existence of this basis can be found in \cite[Chapt.~7, Theorem 2]{Ahlfors}. For the biholomorphic map we define $\tilde\varphi(z)\defi \frac{z}{\omega}$ for $z \in\C$. Then $\varphi\left([z]\right)\defi \tilde\varphi(z)$, $[z]\in\QR{\C}{\Gamma}$ defines a biholomorphic map $\varphi:\QR{\C}{\Gamma}\to\QR{\C}{\tilde \Gamma}$

\end{proof}

\begin{lemma} \label{lemma1}
 Let $\Gamma$ be a lattice in $\C$ and $I: \QR{\C}{\Gamma} \to  \QR{\C}{\Gamma}$ an antiholomorphic involution. 
 Then $I$ is of the form $I(z)= a \bar z + b$ where $a,b \in\C$ with $a \bar\Gamma =\Gamma$, $|a|=1$ and $a \bar b + b \in\Gamma$. Here, $\bar\Gamma$ is the complex conjugation of $\Gamma$.
\end{lemma}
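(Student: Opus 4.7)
The plan is to lift $I$ to the universal cover, turn it into a holomorphic map between tori, and then invoke the classical fact that holomorphic maps between complex tori are affine.

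First, I would write $I$ on the quotient as coming from a continuous lift $\hat I:\C\to\C$, and observe that since $I$ is antiholomorphic, so is $\hat I$. Setting $h(z) \defi \hat I(\bar z)$ then produces an \emph{entire} holomorphic function $h:\C\to\C$. The compatibility of $\hat I$ with the quotient says that for each $\gamma\in\Gamma$ there is a (locally constant, hence constant) element $\phi(\gamma)\in\Gamma$ such that $\hat I(z+\gamma)=\hat I(z)+\phi(\gamma)$. Substituting $\bar z$ for $z$ translates this into $h(z+\bar\gamma)=h(z)+\phi(\gamma)$, so $h$ descends to a holomorphic map $\QR{\C}{\bar\Gamma}\to\QR{\C}{\Gamma}$ between complex tori.

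Next, I would recall the standard fact that any holomorphic map between two complex tori lifts to an affine map on $\C$: the derivative $h'$ is a bounded entire function (bounded because it descends to the compact target torus), hence constant by Liouville, so $h(z)=az+b$ for some $a,b\in\C$. This immediately gives $\hat I(z)=a\bar z+b$, which is the form claimed.

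It then remains to extract the three side conditions. The equation $\hat I(z+\gamma)-\hat I(z)=a\bar\gamma\in\Gamma$ shows $a\bar\Gamma\subseteq\Gamma$; since $I$ is a bijection of the torus, the induced group homomorphism $\bar\gamma\mapsto a\bar\gamma$ must in fact be an isomorphism of $\Gamma$, giving $a\bar\Gamma=\Gamma$. Finally, a direct computation
\begin{equation*}
\hat I(\hat I(z))=a\overline{a\bar z+b}+b=|a|^2 z+a\bar b+b
\end{equation*}
together with the fact that $I\circ I=\mathrm{id}$ on the quotient forces $\hat I\circ \hat I$ to be a lattice translation. Comparing coefficients of $z$ yields $|a|^2=1$, and the constant term then gives $a\bar b+b\in\Gamma$.

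The argument is essentially routine once the reduction $h(z)=\hat I(\bar z)$ is made; the only conceptual step is invoking the rigidity of holomorphic maps between tori, and the only bookkeeping step is translating the involution identity through the $a\bar z+b$ ansatz. I do not expect a real obstacle here — the main thing to be careful about is that all equalities involving $\hat I$ (not just $I$) hold on the nose in $\C$, since we chose a definite lift, so that the involution relation can be read off at the level of coefficients rather than merely modulo $\Gamma$.
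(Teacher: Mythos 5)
Your proof follows essentially the same route as the paper: conjugate $I$ to obtain the holomorphic map $\psi(z)=I(\bar z)$ (your $h$), descend to a map between tori, apply Liouville to conclude affineness, and then read off the side conditions from periodicity and the involution identity. The one place where you are terser than the paper is the step ``$a\bar\Gamma\subseteq\Gamma$ plus bijectivity forces $a\bar\Gamma=\Gamma$'': the paper makes this concrete by observing that $\bar I$ is the holomorphic inverse of $\psi$, runs the same Liouville argument on it to get $\bar I(z)=cz+d$ with $c\Gamma\subseteq\bar\Gamma$, and then uses $\psi\circ\bar I=\mathrm{id}$ to get $ac=1$ and hence $\Gamma\subseteq a\bar\Gamma$; your phrasing (``the induced group homomorphism must be an isomorphism'') is correct but would benefit from the one-line observation that injectivity of $I$ on the quotient is equivalent to $a\bar w\in\Gamma\Rightarrow w\in\Gamma$, i.e.\ $\Gamma\subseteq a\bar\Gamma$. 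On the other hand, you explicitly derive $|a|=1$ and $a\bar b+b\in\Gamma$ from $\hat I\circ\hat I(z)=|a|^2z+a\bar b+b$ being a lattice translation, which the paper states in the lemma but leaves implicit in its proof; that is a small improvement in completeness.
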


\begin{proof} 

 Define $\psi( z)\defi I(\bar z)$. Notice  $\psi: \QR{\C}{\bar \Gamma} \to \QR{\C}{\Gamma}$ is holomorphic. Let $\Gamma$ be generated by $(\tau_1,\tau_2)$. The derivative $\psi': \C \to\C$ is holomorphic and bounded on the compact fundamental domain $F\defi \{t_1\tau_1 + t_2 \tau_2: 0\leq t_1,t_2 \leq 1\}$. The periodicity of $\psi'$ implies that it is bounded in all of $\C$. By Liouville's theorem we get that $\psi' = a$ for an $a\in\C$. Therefore, we have that $\psi(z) = a z + b$ for a vector $b\in\C$. By $I:\QR{\C}{\Gamma} \to  \QR{\C}{\Gamma}$ we have that
 \begin{align*} 
 \Gamma \ni I(z + \omega) - I(z)= \psi(\bar z + \bar\omega) - \psi(\bar z)= a\bar\omega  \ \ \ \ \forall \omega\in\Gamma, 
 \end{align*}
which implies $a\bar\Gamma \subset \Gamma$. For the other implication we use that $\psi$ is one-to-one (if restricted to the fundamental domain $F$). The map $\Phi\defi\bar I$ is an inverse of $\psi$ because $\bar I\circ \psi(z)=\bar I\circ I(\bar z)= z \mod \bar\Gamma$ and $\psi\circ \bar I(z) = I(I(z))=z \mod \Gamma$.
%
The same argument as above implies that there are complex numbers $c,d\in\C$ such that $\Phi(z)=c z + d$. 
So we have that 
 \begin{align*} 
 \bar \Gamma \ni \bar I(z + \omega) - \bar I(z)= \Phi( z + \omega) - \Phi( z)= c\omega  \ \ \ \ \forall \omega\in\Gamma, 
 \end{align*}
which implies $c\Gamma\subset \bar\Gamma$. We get that
\begin{align*}
 \operatorname{id}\big|_{\QR{\C}{\Gamma}}(z) = \psi\circ\Phi(z) = acz + ad + b
\end{align*}
which implies $ac=1$ and $\frac{1}{a}\Gamma\subset \bar\Gamma$.
\end{proof}

\begin{lemma}\label{lemma2}
Let $\Gamma$ be a lattice in $\C$ generated by $(1,\tau)$ with $\Im(\tau)>0$. Then all M\"obius transformations $\varphi:\QR{\C}{\Gamma}\to \QR{\C}{\Gamma}$ are of the form $\varphi(z)= \alpha z + \delta$ with $\delta \in \C$ and
\begin{enumerate}
 \item if $\tau=i$ (quadratic lattice) then $\alpha\in\{1,-1,i,-i\}$,
 \item if $\tau= e^{i\frac \pi 3}$ or $\tau=e^{2i\frac{\pi}{3}}$ (hexagonal lattice) then $\alpha\in\{e^{li\frac{\pi}{3}}: l=1,...,6\}$,
 \item if $\Gamma$ is neither the quadratic lattice nor the hexagonal lattice then $\alpha \in \{1,-1\}.$
\end{enumerate}
\end{lemma}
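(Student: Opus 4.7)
The plan is to lift $\varphi$ to an affine self-map of $\C$ via a Liouville-type argument and then classify the admissible multipliers $\alpha$ using the fundamental-domain constraints on $\tau$ from Proposition~\ref{biholom}.

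First I would repeat the argument from the proof of Lemma~\ref{lemma1}: the M\"obius transformation $\varphi$ lifts to an entire function $\tilde\varphi:\C\to\C$ with $\tilde\varphi(z+\omega)\equiv \tilde\varphi(z)\pmod{\Gamma}$ for every $\omega\in\Gamma$, so $\tilde\varphi'$ is $\Gamma$-periodic, bounded on the compact fundamental parallelogram and therefore constant on $\C$ by Liouville. Hence $\varphi(z)=\alpha z+\delta$. The descent of $\varphi$ to $\QR{\C}{\Gamma}$ requires $\alpha\Gamma=\Gamma$, and comparing Euclidean covolumes forces $|\alpha|^{2}=1$. Writing $\alpha=\alpha\cdot 1\in\Gamma$ yields integers $a,b$ with $\alpha=a+b\tau$, while $\alpha\tau\in\Gamma$ yields integers $c,d$ with $\alpha\tau=c+d\tau$; the second condition will be checked a posteriori.

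The core is the enumeration. After reducing $\tau$ to the fundamental domain $\{\Im\tau>0,\ |\Re\tau|\leq 1/2,\ |\tau|\geq 1\}$, the identity $1=|\alpha|^{2}=a^{2}+2ab\Re(\tau)+b^{2}|\tau|^{2}$ combined with $2|\Re\tau|\leq 1$ and $|\tau|^{2}\geq 1$ gives $a^{2}-|ab|+b^{2}\leq 1$. For $b=0$ this yields $\alpha=\pm 1$, which always descends. For $b\neq 0$ one necessarily has $|a|,|b|\leq 1$, leaving three essentially different sign patterns. The cases $\alpha=\pm(1+\tau)$ require $|\tau|^{2}+2\Re\tau=0$, impossible in the fundamental domain since $\Re\tau>-1/2$ and $|\tau|\geq 1$. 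The cases $\alpha=\pm(1-\tau)$ require $|\tau|^{2}=2\Re\tau$, which together with $|\tau|\geq 1$ and $\Re\tau\leq 1/2$ collapses to $\Re\tau=1/2$, $|\tau|=1$, hence $\tau=e^{i\pi/3}$. Finally, the cases $\alpha=\pm\tau$ force $|\tau|=1$ and $\tau^{2}\in\Gamma$; writing $\tau=e^{i\theta}$ and matching imaginary parts in $\tau^{2}=c+d\tau$ produces $d=2\cos\theta\in\Z$, whence $\theta\in\{\pi/3,\pi/2\}$, i.e.\ $\tau\in\{e^{i\pi/3},i\}$.

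Putting the cases together, the square lattice ($\tau=i$) admits $\alpha\in\{\pm 1,\pm i\}$, and the hexagonal lattice ($\tau=e^{i\pi/3}$) admits $\alpha\in\{\pm 1,\pm\tau,\pm(1-\tau)\}$, which exactly exhaust the six sixth-roots of unity (since $1-e^{i\pi/3}=e^{-i\pi/3}$); every other lattice admits only $\alpha=\pm 1$. The second descent condition $\alpha\tau\in\Gamma$ is then immediate from $i^{2}=-1$ and from $\tau^{2}=\tau-1$ for $\tau=e^{i\pi/3}$. The hexagonal representative $\tau=e^{2i\pi/3}$ allowed in the statement is handled by the same analysis after a change of generating pair, since the group of multipliers is an intrinsic invariant of the lattice. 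I expect the only mild obstacle to be systematic bookkeeping in the six sign patterns and the separate handling of the boundary point $\tau=e^{2i\pi/3}$.
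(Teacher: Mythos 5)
Your proof is correct, and it fills in the details that the paper simply delegates: the paper reduces to the case $\varphi(0)=0$ and then cites Miranda (Chapt.~III, Prop.~1.12) for the classification of $\alpha$, whereas you carry out the full enumeration. Your argument — Liouville to get $\varphi(z)=\alpha z+\delta$, covolume to get $|\alpha|=1$, then writing $\alpha = a + b\tau\in\Gamma$ and bounding the quadratic form $a^2 - |ab| + b^2 \leq 1$ — is the standard textbook argument, so it is the "same" proof in spirit, just written out. The arithmetic checks out: $|1+\tau|=1$ is impossible in the fundamental domain since $2\Re\tau + |\tau|^2 > -1 + 1 = 0$; $|1-\tau|=1$ forces $\tau = e^{i\pi/3}$; and $\alpha=\pm\tau$ with $\tau^2\in\Gamma$ forces $2\cos\theta\in\Z$ with $0\le\cos\theta\le 1/2$ (using the normalization $\Re\tau\ge 0$ when $|\tau|=1$ from Proposition~\ref{biholom}), giving $\tau\in\{i,e^{i\pi/3}\}$. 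One small point worth making explicit: your reduction "to the fundamental domain" plus the claim that the multiplier group is an intrinsic invariant of $\Gamma$ are really the same observation (the set $\{\alpha : \alpha\Gamma=\Gamma\}$ depends only on $\Gamma$, not on the chosen generator $\tau$), and this is also what handles $\tau = e^{2i\pi/3}$, since $(1, e^{2i\pi/3})$ and $(1,e^{i\pi/3})$ generate the same lattice.
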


\begin{proof}
 At first we note that a translation $\varphi(z)=z + \delta$ for a $\delta\in\C$ is always a M\"obius transformation. 
 Therefore, we assume that $\varphi(0)=0$ (by composing with a translation). The rest of the proof can be found in \cite[Chapt.~III, Proposition~1.12.]{Miranda}.
\end{proof}

\begin{lemma} \label{lemma3}
Let $\Gamma$ be a lattice in $\C$ generated by $(1,\tau)$ with $\Im(\tau)>0$ and $|\tau|=1$. Let $I$ be an antiholomorphic involution on $\QR{\C}{\Gamma}$ of the form $I(z)= a \bar z + b$ with $a\in\{+\tau, - \tau\}$. Then $I$ has a fixpoint.
\end{lemma}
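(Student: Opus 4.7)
The plan is to use Lemma~\ref{lemma1}, which gives $I(z)=a\bar z+b$ with $|a|=1$ and $a\bar b+b\in\Gamma$, and then to solve the fixpoint equation $z-a\bar z\equiv b\pmod{\Gamma}$ by direct analysis. Since $a=\pm\tau$ and $|\tau|=1$, I write $a=e^{2i\alpha}$; the constraints of Proposition~\ref{biholom} together with the two choices $a=\pm\tau$ force $\sin\alpha\neq 0$, a fact that will be crucial below.

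The first step exploits the involution condition. Decomposing $b=(s+it)e^{i\alpha}$ in the orthogonal basis $(e^{i\alpha},ie^{i\alpha})$, a one-line calculation gives $a\bar b+b=2se^{i\alpha}$. Thus $2se^{i\alpha}\in\Gamma\cap e^{i\alpha}\R$. Since $|1|=|e^{2i\alpha}|=1$, both generators of $\Gamma$ project onto $e^{i\alpha}\R$ to the same vector $\cos\alpha\cdot e^{i\alpha}$, and an element $M+Ne^{2i\alpha}\in\Gamma$ lies on the line $e^{i\alpha}\R$ precisely when $M=N$ (here one uses $\sin\alpha\neq 0$). This gives $\Gamma\cap e^{i\alpha}\R=2\cos\alpha\cdot\Z\cdot e^{i\alpha}$, and hence $s=m\cos\alpha$ for some $m\in\Z$.

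The second step identifies the image of $L(z)\defi z-a\bar z$. The substitution $z=e^{i\alpha}w$ gives $L(e^{i\alpha}w)=2ie^{i\alpha}\,\Im(w)$, so the image of $L$ is the line $ie^{i\alpha}\R$. The shifted target $b-m$ satisfies $\Re((b-m)e^{-i\alpha})=s-m\cos\alpha=0$, hence $b-m\in ie^{i\alpha}\R$, and the equation $L(z_0)=b-m$ admits a solution $z_0\in\C$; projecting $z_0$ to $\QR{\C}{\Gamma}$ yields a fixpoint of $I$.

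The main obstacle is the lattice identity $\Gamma\cap e^{i\alpha}\R=2\cos\alpha\cdot\Z\cdot e^{i\alpha}$, which is precisely where the hypothesis $|\tau|=1$ enters: the equal lengths of the two generators turn $1+e^{2i\alpha}=2\cos\alpha\cdot e^{i\alpha}$ into a lattice point lying exactly on the projection axis, producing the factor of $2$ that matches the factor $2$ appearing in $a\bar b+b=2se^{i\alpha}$. Without this numerical coincidence, the involution constraint would miss the fixpoint condition by exactly a factor of $2$, and the two cases $a=\pm\tau$ are handled uniformly by this argument since the parameter $\alpha$ absorbs the sign.
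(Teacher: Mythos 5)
Your proof is correct. The argument is a clean reformulation of the paper's: both proofs reduce the fixpoint question to the constraint $a\bar b+b\in\Gamma$ from Lemma~\ref{lemma1} and then show that $b$ lies in $\Gamma+\operatorname{Im}(L)$ where $L(z)=z-a\bar z$, but the bookkeeping is different. The paper conjugates $I$ by a real translation $\varphi(z)=z+\delta$, chosen so that the new constant term lies in $\R$; the constraint then forces that constant to be an integer, and $0$ is visibly a fixpoint of the conjugate. You instead decompose $b$ in the rotated orthogonal basis $(e^{i\alpha},ie^{i\alpha})$ adapted to $a=e^{2i\alpha}$, compute $\Gamma\cap e^{i\alpha}\R$ explicitly, and solve $L(z_0)=b-m$ directly. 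Both routes hinge on the same numerical coincidence coming from $|\tau|=1$: your $1+e^{2i\alpha}=2\cos\alpha\,e^{i\alpha}$ being a lattice point on the axis plays the same role as the paper's observation that $b'(1\pm\tau)\in\Gamma$ with $b'\in\R$ forces $b'\in\Z$. Yours is arguably more transparent geometrically (you exhibit $\operatorname{Im}(L)$ and a preimage directly), while the paper's conjugation trick is shorter to write and is consistent with the translation normalization used throughout the surrounding lemmas.

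One small inaccuracy of attribution: you invoke ``the constraints of Proposition~\ref{biholom}'' to conclude $\sin\alpha\neq 0$, but the lemma hypothesis $\Im(\tau)>0$ is already sufficient, since it gives $\Im(a)=\pm\Im(\tau)\neq 0$ and hence $a\neq 1$. (For the same reason $a\neq -1$, so $\cos\alpha\neq 0$ and the lattice identity $\Gamma\cap e^{i\alpha}\R=2\cos\alpha\,\Z\,e^{i\alpha}$ is nondegenerate, as you implicitly need when writing $s=m\cos\alpha$.) This does not affect the validity of the argument.
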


\begin{proof}
Let $\varphi(z)= z + \delta$ be a translation on $\QR{\C}{\Gamma}$. We have that
\begin{align*}
 \varphi^{-1}\circ I\circ \varphi (z)= a (\bar z + \bar\delta) + b - \delta= a\bar z + b + a \bar \delta - \delta.
\end{align*}
Consider now a translation with $\delta\in\R$. Then we have that
\begin{align*}
 \tilde I(z)\defi  \varphi^{-1}\circ I\circ \varphi (z) = \pm\tau\bar z + (\pm\tau -1)\delta + b.
\end{align*}
By $\Im (\pm\tau - 1)\neq 0$ we can choose $\delta\in\R$ such that $(\pm\tau - 1)\delta + b \in\R.$
Hence by passing from $I$ to $\tilde I$ we can assume that the involution is of the form $I(z)= \pm\tau \bar z + b$ with $b\in\R$. 
Lemma~\ref{lemma1} implies that $\pm\tau b + b \in\Gamma$. Since $(1,\tau)$ is the generating pair of $\Gamma$ we get that $b\in\Z$. But $I(z)=\pm\tau\bar z + n$ with $n\in\Z$ has the fixpoint $0$. Then the original involution also had a fixpoint. 
\end{proof}

\begin{lemma}\label{lemma4}
Let $\Gamma$ be a lattice in $\C$ generated by $(1,\tau)$ with 
$\Im(\tau)>0$ and $|\tau|=1$. Let $I$ be an antiholomorphic involution on $\QR{\C}{\Gamma}$ of the form $I(z)= a \bar z + b$ with $a \not \in\{+1, - 1\}$. Then $I$ has a fixpoint.
 
\end{lemma}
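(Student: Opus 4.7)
I would first reduce to a short list of possible values of $a$. By Lemma~\ref{lemma1} one has $a\bar\Gamma=\Gamma$ and $|a|=1$. Since $|\tau|=1$ implies $\bar\tau=\tau^{-1}$ and therefore $\tau\bar\Gamma=\Gamma$, the condition $a\bar\Gamma=\Gamma$ is equivalent to $a/\tau$ being a unit of $\Gamma$ in the sense of Lemma~\ref{lemma2}. That lemma tells us the unit group is $\{\pm 1\}$ unless $\Gamma$ is the square lattice ($\tau=i$) or hexagonal. In the generic and square cases the hypothesis $a\notin\{+1,-1\}$ forces $a=\pm\tau$, and Lemma~\ref{lemma3} immediately supplies a fixpoint. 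Thus only the hexagonal case requires new work, with $a$ a sixth root of unity outside $\{\pm 1,\pm\tau\}$, i.e.\ $a\in\{\tau^2,-\tau^2\}$ (the alternative representative $\tau=e^{i2\pi/3}$ is handled identically).

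The next step is to normalize $I$ by a M\"obius conjugation. For any sixth root of unity $\alpha$, Lemma~\ref{lemma2} says $\varphi(z)=\alpha z$ is a M\"obius transformation of $\QR{\C}{\Gamma}$, and a direct computation gives $\varphi^{-1}\circ I\circ\varphi(z)=(a\alpha^{-2})\bar z + \alpha^{-1}b$. Since $\alpha\mapsto\alpha^{-2}$ maps the sixth roots of unity onto the third roots of unity, the new leading coefficient $\tilde a := a\alpha^{-2}$ ranges over $a$ times the group of third roots of unity. A short case check for $a\in\{\tau^2,-\tau^2\}$ shows that some choice of $\alpha$ yields $\tilde a\in\{+1,-1\}$. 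Because fixpoints are preserved by M\"obius conjugation, it suffices to verify that $\tilde I(z)=\pm\bar z+\tilde b$ has a fixpoint on the hexagonal $\QR{\C}{\Gamma}$.

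Finally I would handle the two remaining sign choices directly. For $\tilde a=1$, the involution relation $\tilde I\circ \tilde I\equiv\operatorname{id}\mod\Gamma$ amounts to $2\Re(\tilde b)\in\Gamma\cap\R$, while $\tilde I(z)=z\mod\Gamma$ is solvable iff $\Re(\tilde b)\in\Re(\Gamma)$. For the hexagonal lattice one computes $\Gamma\cap\R=\Z$ whereas $\Re(\Gamma)=\tfrac12\Z$, so the involution condition automatically places $\Re(\tilde b)$ into $\Re(\Gamma)$ and a fixpoint exists. For $\tilde a=-1$ the argument is symmetric, with the real and imaginary axes swapped, using $\Gamma\cap i\R = i\sqrt3\,\Z$ and $\Im(\Gamma)=\tfrac{\sqrt3}{2}\Z$. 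The only genuinely delicate point is this final matching of divisibilities in the hexagonal lattice; everything else is a routine reduction via Lemmas~\ref{lemma1}, \ref{lemma2} and~\ref{lemma3}.
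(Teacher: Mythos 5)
Your proof is correct. It follows the same broad outline as the paper's argument (Lemma~\ref{lemma1} to constrain $a$, enumeration of unimodular lattice points, Lemma~\ref{lemma3} for $a=\pm\tau$, M\"obius conjugation in the hexagonal case), but the way you close out the hexagonal case is a genuine and arguably cleaner variant. You first observe that $\tau^4=-\tau$ is already covered by Lemma~\ref{lemma3}, so only $a\in\{\tau^2,-\tau^2\}$ remains; you then conjugate $\tau^2\mapsto 1$ and $-\tau^2\mapsto -1$ (both reachable since $\alpha\mapsto\alpha^{-2}$ surjects onto the cube roots of unity) and finish with a short self-contained divisibility check on the hexagonal lattice: for $\tilde a=1$ the involution condition forces $\Re(\tilde b)\in\tfrac12\Z=\Re(\Gamma)$, and for $\tilde a=-1$ it forces $\Im(\tilde b)\in\tfrac{\sqrt3}{2}\Z=\Im(\Gamma)$, so a fixpoint always exists. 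The paper instead conjugates $\tau^5$ to $\tau$ (again Lemma~\ref{lemma3}) and the even powers to $a=1$, at which point it tacitly appeals to the fact that $\bar z + b$ has fixpoints on the hexagonal lattice, which is really a consequence of the later Lemma~\ref{lemma7} (since $\Re(\tau)=\tfrac12$ there). Your argument makes that endgame explicit and avoids any forward reference, which is a small but worthwhile improvement; the rest is the same case analysis. One cosmetic remark: your opening framing of $a\bar\Gamma=\Gamma$ as ``$a/\tau$ is a unit of $\Gamma$'' (using $\tau\bar\Gamma=\Gamma$ when $|\tau|=1$) is a tidy way to invoke Lemma~\ref{lemma2}, and it gives the same short list as the paper's direct count of lattice points on $\mathbb{S}^1$.
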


\begin{proof}
 Since $a$ satisfies $a\bar\Gamma =\Gamma$ and $|a|=1$ (cf. Lemma~\ref{lemma1}) we want to know how many lattice points lie on the unit circle $\mathbb{S}^1$. There are two cases.\\
 Case 1: $\tau-1\not\in\Sph$. But $|\tau-1|^2\neq 1$ is here equivalent to $\Re(\tau)\neq \frac{1}{2}$ since $|\tau-1|^2=2-2 \Re (\tau)$. 
 Therefore we know that $\Gamma$ cannot be the hexagonal lattice and there are exactly four lattice points on $\Sph$, namely $1,-1,\tau$ and $-\tau$. Since $a\bar\Gamma=\Gamma$ and $1\in\bar\Gamma$ we have that $a\in\Gamma\cap\Sph$, which implies $a\in\{1,-1,\tau,-\tau\}$. But $a \not \in\{+1, - 1\}$ is an assumption and $a\in\{\tau,-\tau\}$ implies that $I$ has a fixpoint by the previous lemma.\\
 Case 2: $\tau-1\in\Sph$. This corresponds to the hexagonal lattice, $\tau= e^{i\frac{\pi}{3}}$. There are six lattice points lying on $\Sph$, namely $e^{li\frac{\pi}{3}}, l=1,...,6$. 
 Again as in the first case we have that $a\in\Gamma\cap\Sph$. The cases $l=3$ and $l=6$ are not possible by assumption, therefore we get that $a\in \{\tau^l:l=1,2,4,5\}$. Now consider a M\"obius transformation of the hexagonal lattice $\varphi(z)=\alpha z$ with $\alpha\neq0$. Lemma~\ref{lemma2} yields $\bar\alpha\in\{\tau^k:k=1,...,6\}$. We compose
 \begin{align*}
  \tilde I(z)\defi\varphi^{-1}\circ I\circ \varphi (z) = \frac{\bar\alpha}{\alpha} a  \bar z + \frac{b}{\alpha}=\tau^{2k + l}\bar z + \bar\alpha b.
 \end{align*}
If $l$ is even, then we choose $k$ such that $2k + l =6$. Thus, we are in the case $a=1$. If $l=5$ then we compose with the M\"obius transformation $\varphi(z)=\alpha z$ where $\alpha = \tau^{4}$ (which is equivalent to $k=-2$). We have then reduced it to the case $a=\tau$, which is Lemma~\ref{lemma3}.

\end{proof}

\begin{lemma} \label{lemma5}
 Consider a lattice $\Gamma$ in $\C$ generated by a pair $(1,\tau)$ with $\Im(\tau)>0$, $-\frac{1}{2}<\Re(\tau)\leq \frac{1}{2}$ and $|\tau|> 1$.
 Let $I:\QR{\C}{\Gamma}\to\QR{\C}{\Gamma}$ be an antiholomorphic involution. Then we have that $I(z)=a\bar z + b$ with $a\in\{-1,1\}$.
\end{lemma}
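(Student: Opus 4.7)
The plan is to invoke Lemma~\ref{lemma1} and then reduce the statement to a lattice-counting claim, namely that $\Gamma\cap\Sph=\{-1,1\}$ under the stated hypotheses on $\tau$.

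By Lemma~\ref{lemma1}, any antiholomorphic involution $I$ on $\QR{\C}{\Gamma}$ has the form $I(z)=a\bar z +b$ with $\abs{a}=1$ and $a\bar\Gamma=\Gamma$. Since $1\in\bar\Gamma$, the condition $a\bar\Gamma=\Gamma$ forces $a\in\Gamma$, so it suffices to show that the only unit-length points of $\Gamma$ are $\pm 1$.

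An arbitrary element of $\Gamma$ has the form $m+n\tau$ with $m,n\in\Z$, and I would analyze $\abs{m+n\tau}^2 = m^2 + 2mn\Re(\tau) + n^2\abs{\tau}^2 = 1$ according to the value of $n$. The hypotheses $\abs{\tau}>1$ and $-\tfrac12 < \Re(\tau)\leq \tfrac12$ give the lower bound $\Im(\tau)^2 = \abs{\tau}^2-\Re(\tau)^2 > \tfrac34$, so for $\abs{n}\geq 2$ we have $\abs{m+n\tau}^2 \geq n^2\Im(\tau)^2 > 3$, and no such solution exists. The case $n=0$ immediately yields $m=\pm 1$. For $n=\pm 1$, the symmetry $\abs{m+n\tau}=\abs{-m-n\tau}$ allows us to assume $n=1$, and then $m(m+2\Re(\tau)) = 1-\abs{\tau}^2<0$ forces $m$ and $m+2\Re(\tau)$ to have opposite signs. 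The subcase $m\geq 1$ would force $\Re(\tau)<-\tfrac12$, contradicting $-\tfrac12<\Re(\tau)$; the subcase $m\leq -1$ would force $\Re(\tau)>\tfrac12$, contradicting $\Re(\tau)\leq \tfrac12$; and $m=0$ contradicts $\abs{\tau}>1$ directly.

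Combining these three cases gives $\Gamma\cap\Sph=\{-1,1\}$, so $a\in\{-1,1\}$. Because the hypothesis $\abs{\tau}>1$ is strict here, no passage to M\"obius transformations is needed, in contrast with Lemmas~\ref{lemma3} and~\ref{lemma4}. I expect no real obstacle; the argument is elementary arithmetic, and the only subtlety is careful bookkeeping of the strict versus non-strict inequalities in the constraints on $\tau$.
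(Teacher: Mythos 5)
Your proof is correct and follows the same strategy as the paper: invoke Lemma~\ref{lemma1} to get $a\in\Gamma\cap\Sph$, then show by elementary casework on $m+n\tau$ that $\Gamma\cap\Sph=\{\pm1\}$ under the stated hypotheses. If anything, your case analysis (split on $n=0$, $\abs{n}=1$, $\abs{n}\geq 2$, and then on the sign of $m$) is spelled out a bit more completely than the paper's, which focuses on points of the form $-1+l\tau$.
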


\begin{proof}
 By Lemma~\ref{lemma1} we know that $a\bar\Gamma=\Gamma$ and $|a|=1$. Hence $a\in\Sph\cap\Gamma$. We claim that $\Sph\cap\Gamma=\{-1,1\}$. Since $|\tau|>1$, we know that $\pm\tau\not\in\Sph\cap\Gamma$. But then we only have to consider the case that $z\in \Sph\cap\Gamma$ is of the form $z=-1 + l\tau$ for an $l\in\Z\setminus\{0\}$. We use the assumptions on $\tau$ and get 
 \begin{align*}
  |-1 + l\tau|^2= 1 + l^2|\tau|^2 - 2l\Re(\tau)> 1 + l^2 - l \geq 1.
 \end{align*}
This strict inequality shows the lemma.
\end{proof}

\begin{definition}
 A lattice $\Gamma$ in $\C$ is called a \emph{real lattice} if it is stable under complex conjugation, i.e.\ $\bar\Gamma=\Gamma$.
\end{definition}

\begin{lemma}\label{lemma6a}
 Let $\Gamma$ be a real lattice generated by $(1,\tau)$ with $-\frac{1}{2}<\Re(\tau)\leq \frac{1}{2}$. Then we have that $\Re(\tau)\in\{0,\frac{1}{2}\}$.
\end{lemma}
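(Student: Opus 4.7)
The plan is to use the defining property $\bar\Gamma = \Gamma$ directly on the generator $\tau$ and read off the information from the resulting integer equation.

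First I would observe that since $\tau \in \Gamma$, we have $\bar\tau \in \bar\Gamma = \Gamma$, so $\bar\tau$ admits an integer expansion in the generating pair $(1,\tau)$, i.e.\ there exist $m,n\in\Z$ with
\begin{align*}
\bar\tau = m + n\tau.
\end{align*}
Comparing imaginary parts yields $-\Im(\tau) = n\,\Im(\tau)$. Since $\Im(\tau)>0$ (part of the standing assumption on the generating pair coming from Proposition~\ref{biholom}), this forces $n=-1$. Substituting back and comparing real parts gives $\Re(\tau) = m - \Re(\tau)$, hence
\begin{align*}
\Re(\tau) = \tfrac{m}{2} \qquad \text{for some } m \in \Z.
\end{align*}

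The final step is to apply the range restriction $-\tfrac{1}{2} < \Re(\tau) \leq \tfrac{1}{2}$, which leaves only $m=0$ or $m=1$, i.e.\ $\Re(\tau)\in\{0,\tfrac{1}{2}\}$, as claimed.

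I do not anticipate any real obstacle here; the lemma is essentially a one-line consequence of $\bar\tau \in \Gamma$ combined with the normalization of the fundamental domain for the modular group. The only point to be slightly careful about is that the integer equation is forced (not merely allowed) by the fact that $(1,\tau)$ is a $\Z$-basis of $\Gamma$, so $m$ and $n$ are uniquely determined; no extra case distinction (e.g.\ quadratic vs.\ hexagonal lattice) is needed at this stage.
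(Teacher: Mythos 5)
Your proof is correct and follows essentially the same route as the paper: write $\bar\tau$ in the $\Z$-basis $(1,\tau)$, compare imaginary parts (using $\Im(\tau)\neq 0$) to pin the $\tau$-coefficient to $-1$, then compare real parts and apply the range restriction. The only cosmetic difference is the labeling of the integer coefficients.
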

\begin{proof}
 Let $\tau=x+iy$. Then there are $m,n\in\Z$ such that
 \begin{align*}
  & \bar \tau =x-iy =n + m(x+iy)\\
  \Leftrightarrow \ \ & x-n-mx - i(my+y)=0\\
  \Leftrightarrow \ \ & m=-1\ \ \text{ and } \ \ x(1-m)=n.
 \end{align*}
 This implies that $\Re(\tau)=x\in (-\frac{1}{2},\frac{1}{2}]\cap \{\frac{n}{2}:n\in\Z\}=\{0,\frac{1}{2}\}$.
\end{proof}

\begin{lemma} \label{lemma6}
 Let $\Gamma$ be a lattice generated by $(1,\tau)$ with $-\frac{1}{2}<\Re(\tau)\leq \frac{1}{2}$ and let $I(z)=a\bar z +b$ be an antiholomorphic involution with $a=-1$. Then the lattice is real and $\Re(\tau)\in\{0,\frac{1}{2}\}$. If $\Re(\tau)=\frac{1}{2}$ then $I$ has a fixpoint. If $\Re(\tau)=0$ then $I(z)=-\bar z + \frac{\tau}{2}$ (up to M\"obius transformation) and $I$ has no fixpoints.
\end{lemma}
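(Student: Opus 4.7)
The plan is to apply Lemma~\ref{lemma1} together with Lemma~\ref{lemma6a} for the first two assertions, and then to do a direct fixpoint analysis modulated by a translation Möbius transformation for the remaining ones.

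\textbf{Step 1 (reality of $\Gamma$ and reduction to $\Re(\tau)\in\{0,\tfrac12\}$).} Lemma~\ref{lemma1} applied to $a=-1$ gives $-\bar\Gamma=\Gamma$. Since every lattice is invariant under $z\mapsto -z$, this yields $\bar\Gamma=\Gamma$, so $\Gamma$ is a real lattice. Lemma~\ref{lemma6a} then immediately forces $\Re(\tau)\in\{0,\tfrac12\}$.

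\textbf{Step 2 (fixpoint criterion).} I would translate the equation $I(z)=z$ modulo $\Gamma$ into an equation in $\C$: writing $z=x+iy$ we have $I(z)-z=-2x+b$, so $z$ represents a fixpoint if and only if there is an $\omega\in\Gamma$ with $-2x+b=\omega$. Comparing imaginary parts, this is solvable for some $x\in\R$ precisely when $\Im(b)\in\Im(\Gamma)=\Im(\tau)\,\Z$ (the same in both cases, since $\Re(\tau)$ is real). On the other hand, Lemma~\ref{lemma1} gives $a\bar b+b=b-\bar b=2i\Im(b)\in\Gamma$; writing $2i\Im(b)=m+n\tau$ and comparing real parts forces $m=-n\Re(\tau)$. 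If $\Re(\tau)=\tfrac12$, then $m=-n/2$ forces $n$ to be even, hence $\Im(b)=\tfrac{n}{2}\Im(\tau)\in\Im(\tau)\Z$, which by the criterion produces a fixpoint and concludes the $\Re(\tau)=\tfrac12$ case. If $\Re(\tau)=0$, instead $m=0$ and $\Im(b)\in\tfrac12\Im(\tau)\Z$, so the only fixpoint-free possibility is $\Im(b)\in(\tfrac12+\Z)\Im(\tau)$.

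\textbf{Step 3 (normalising the fixpoint-free case by Möbius).} For a translation Möbius transformation $\varphi(z)=z+\delta$ a direct computation gives
\begin{align*}
\varphi^{-1}\circ I\circ\varphi(z)=-\bar z+b-2\Re(\delta),
\end{align*}
so $\Re(b)$ may be reduced to an arbitrary real value while $\Im(b)$ is unchanged. Choosing $\Re(\delta)=\tfrac12\Re(b)$ makes the new $b$ purely imaginary, and with $\Im(b)=(k+\tfrac12)\Im(\tau)$ for some $k\in\Z$, we have $b=(k+\tfrac12)\tau\equiv\tfrac{\tau}{2}\pmod\Gamma$. This gives the claimed normal form $I(z)=-\bar z+\tfrac{\tau}{2}$. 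Finally, with $\tau=i\Im(\tau)$, the equation $-\bar z+\tfrac{\tau}{2}=z+m+n\tau$ decomposes into $-2\Re(z)=m$ and $\tfrac12\Im(\tau)=n\Im(\tau)$, and the second requires $n=\tfrac12$, which is impossible; hence this normal form indeed has no fixpoints.

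\textbf{Main obstacle.} There is no deep obstruction; the only thing that needs a little care is the bookkeeping in Step~2, where the two a~priori independent conditions ``$\Im(b)\in\Im(\Gamma)$'' (fixpoint criterion) and ``$2i\Im(b)\in\Gamma$'' (involution condition from Lemma~\ref{lemma1}) must be combined correctly, distinguishing $\Re(\tau)=\tfrac12$ (where the two conditions coincide and a fixpoint always exists) from $\Re(\tau)=0$ (where they differ by a factor of $2$, leaving room for the fixpoint-free class represented by $b=\tfrac\tau2$).
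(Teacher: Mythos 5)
Your proof is correct and follows essentially the same route as the paper: both arguments invoke Lemma~\ref{lemma1} and Lemma~\ref{lemma6a} for reality and $\Re(\tau)\in\{0,\tfrac12\}$, then exploit the constraint $a\bar b+b=2i\Im(b)\in\Gamma$ together with the observation that translations shift only $\Re(b)$. The one cosmetic difference is that you formulate the fixpoint criterion $\Im(b)\in\Im(\Gamma)=\Im(\tau)\Z$ up front and only translate for the final normalization, whereas the paper translates first to make $b$ purely imaginary; the arithmetic is the same either way.
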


\begin{proof}
 As every lattice satisfies $-\Gamma=\Gamma$ we have by Lemma~\ref{lemma1} that $\bar\Gamma=-\bar\Gamma=\Gamma$, i.e.\ the lattice is real. The previous lemma yields $\Re(\tau)\in\{0,\frac{1}{2}\}$.\\

 Case $\Re(\tau)=\frac{1}{2}$: We note that 
 \begin{align}\label{Im}
  i\,\R\cap\Gamma=\{2m i\Im(\tau): m\in\Z\}.
 \end{align}
By composing $I$ with a translation we can assume that $b\in i\R$: 
Consider the translation $\varphi(z)= z + \delta$, then 
\begin{align*}
 \tilde I(z)\defi\varphi^{-1}\circ I\circ \varphi (z)= -\bar z + b - \bar \delta - \delta = -\bar z + b -2\Re(\delta).
\end{align*}
 Thus, we can subtract the real part of $b$ and consider $\tilde I$ instead of $I$. \\
 But by $a\bar b+b\in\Gamma$ (cf. Lemma~\ref{lemma1}) and (\ref{Im}) we have that $2 b=-\bar b + b\in\Gamma$ and $2b= 2m i\Im(\tau)$ for an $m\in\Z$. Composing the involution with another translation yields that $b=m\tau$ for an $m\in\Z$. Hence $I(z)=-\bar z + m\tau$ which has the fixpoint $0$. \\
 
 Case $\Re(\tau)=0$: Here, $\Gamma$ is a rectangular lattice. By translation as in the first case  we assume $b\in i\,\R$. 
 Therefore, we get that $-\bar b + b = 2b \in \Gamma\cap i\,\R=\{m\tau:m\in\Z\}$, hence $b=\frac{m}{2}\tau$ for an $m\in\Z$. Observe that $m$ cannot be even because otherwise I would have a fixpoint. As the formula for $I$ is only defined modulo $\Gamma$, we have that $I(z)=-\bar z + \frac{\tau}{2}$. We only have to show that this $I$ has no fixpoint: An equality like
 \begin{align*}
  I(z)-z= -\bar z - z + \frac{\tau}{2}= -2\Re(z) + \frac{\tau}{2} =n + m\tau
 \end{align*}
 cannot hold for numbers $m,n\in\Z$ because $\tau$ is purely imaginary.
\end{proof}

\begin{lemma}\label{lemma7}
Let $\Gamma$ be a lattice generated by $(1,\tau)$ with $-\frac{1}{2}<\Re(\tau)\leq \frac{1}{2}$ and let $I(z)=\bar z +b$ be an antiholomorphic involution. Then, up to M\"obius transformation, $I$ is of the form $I(z)=\bar z + \frac{1}{2}$ and the lattice satisfies $\Re(\tau)\in\{0,\frac{1}{2}\}$. If $\Re(\tau)=\frac{1}{2}$ then $I$ has fixpoints
\end{lemma}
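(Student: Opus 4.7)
My plan is to mirror the structure of the proof of Lemma~\ref{lemma6} as closely as possible, since the setup is the same with $a=1$ replacing $a=-1$. The input I will invoke is Lemma~\ref{lemma1} (giving $a\bar\Gamma = \Gamma$ and $a\bar b + b \in \Gamma$) together with Lemma~\ref{lemma6a}.

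First, since $a = 1$, the condition $a \bar\Gamma = \Gamma$ becomes $\bar\Gamma = \Gamma$, so $\Gamma$ is a real lattice. By Lemma~\ref{lemma6a}, this immediately gives $\Re(\tau) \in \{0, \tfrac{1}{2}\}$. In either situation $\Gamma \cap \R = \Z$, which I will use in a moment. Next I will reduce $b$ to a real number by using a translation. Conjugating $I$ by $\varphi(z) = z + \delta$ gives
\begin{equation*}
\varphi^{-1}\circ I\circ\varphi(z) = \bar z + \bar\delta - \delta + b = \bar z + b - 2i\,\Im(\delta),
\end{equation*}
so taking $\delta \in i\R$ suitably I can subtract the imaginary part of $b$, and henceforth assume $b \in \R$.

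Now I apply Lemma~\ref{lemma1}: $a\bar b + b = 2b \in \Gamma$. Combined with $\Gamma\cap \R = \Z$, this gives $2b \in \Z$, so $b = \tfrac{k}{2}$ for some $k \in \Z$. If $k$ is even, then $b \in \Z \subset \Gamma$, so modulo $\Gamma$ the involution reads $I(z) = \bar z$, which fixes every real point—a fixpoint case. If $k$ is odd, then $b \equiv \tfrac{1}{2} \pmod \Gamma$, and thus $I$ takes the claimed form $I(z) = \bar z + \tfrac12$. This establishes the normal form.

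Finally I check fixpoints of the normal form $I(z)=\bar z +\tfrac12$ in both lattices. The condition $I(z) \equiv z$ modulo $\Gamma$ reads $-2i\,\Im(z) + \tfrac12 \in \Gamma$. In the rectangular case $\Re(\tau)=0$ the real part gives $\tfrac12 \in \Z$, impossible, so $I$ has no fixpoints. In the case $\Re(\tau) = \tfrac12$, writing $\tau = \tfrac12 + it$, the equation $\tfrac12 = m + \tfrac{n}{2}$ is solvable (e.g.\ $n=1, m=0$), and matching imaginary parts then produces a whole horizontal line of fixpoints, verifying the last assertion. The only potential subtlety will be bookkeeping which Möbius transformations (from Lemma~\ref{lemma2}) are available in each lattice, but since only translations are actually used above, the argument proceeds uniformly for all permitted $\tau$.
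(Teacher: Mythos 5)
Your proof is correct and follows essentially the same line as the paper's: both reduce $b$ to a real number by a translation, use Lemma~\ref{lemma1} together with $\Gamma\cap\R=\Z$ to force $b\equiv 0$ or $b\equiv\tfrac12\pmod\Gamma$, discard the $b\equiv 0$ case as having fixpoints, invoke Lemma~\ref{lemma6a} to get $\Re(\tau)\in\{0,\tfrac12\}$, and then analyze fixpoints of the normal form in the two sublattices. The only difference is ordering (you establish the real-lattice condition and $\Re(\tau)\in\{0,\tfrac12\}$ before reducing $b$, the paper after), which is immaterial.
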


\begin{proof}
 By composing with a translation $\varphi(z)=z + \delta$ we get
 \begin{align*}
 \tilde I(z)\defi\varphi^{-1}\circ I\circ \varphi (z)= \bar z + b + \bar \delta - \delta = -\bar z + b -2i\Im(\delta).
\end{align*}
Thus, we can assume that $b\in\R$. Now we have that $2 b= \bar b + b\in\Gamma\cap \R=\Z$ and therefore $b=\frac{m}{2}$ for an $m\in\Z$. If $m$ was even, then $I$ would have the fixpoint $0$, and since the formula is only defined modulo $\Gamma$ we have that $b=\frac{1}{2}$. As $a\bar\Gamma =\Gamma$ with $a=1$ we know that the lattice is real and hence satisfies $\Re(\tau)\in\{0,\frac{1}{2}\}$ (Lemma~\ref{lemma6a}).
It remains to check in which cases $I$ has fixpoints: Let $m,n\in \Z$. If 
\begin{align*}
 I(z)-z = \bar z - z + \frac{1}{2} = -2i\Im(z) + \frac{1}{2} = n + m\Re(\tau) + m i\Im(\tau)
\end{align*}
then $\Im(z)=-\frac{m}{2}\Im(\tau)$ and $\Re(\tau)=\frac{1-2n}{2m}$. Hence if the real part of $\tau$ is an odd number divided by an even number, then $I$ has a whole line of fixpoints, otherwise it has no fixpoints. 
\end{proof}

We are now able to prove Theorem~\ref{thminvolution}:
\begin{proof}
 Any involution is of the form $I(z)=a\bar z + b$ by Lemma~\ref{lemma1}. If $|\tau|> 1$ then Lemma~\ref{lemma5} implies that $a\in\{1,-1\}$. 
 The case $a=-1$ is Lemma~\ref{lemma6} and the case $a=1$ is Lemma~\ref{lemma7}. If $|\tau|=1$ then we have that $a\in\{-1,1\}$ by Lemma~\ref{lemma4}. Lemma~\ref{lemma6} and Lemma~\ref{lemma7} apply also for this case.
\end{proof}


\section{Willmore surfaces of Klein bottle type in $\R^4$ with energy $8\pi$} \label{Section2}

Let $M$ be a closed manifold of dimension two (orientable or non-orientable) immersed into an oriented 4-dimensional Riemannian manifold $(X^4,h)$. 
The immersion induces a metric $g$ on $M$, a connection $\nabla$ on tangential bundle $TM$ and a connection $\nabla^\perp$ on the normal bundle  $NM$.
Since $TM$ and $NM$ are both two-dimensional their curvature operator is determined by scalars.
Let $\{E_1, E_2, N_1, N_2\}$ be an orthonormal oriented frame of $X^4$ in a neighborhood $U$ of $ x_0 \in M$ such that $E_1, E_2$ is a basis for $T_xM$ and $N_1, N_2$ a basis for $N_xM$ for all $x \in U$. 
The scalars of interest are the Gauss curvature given on $U$ by
\[ K(x)= R(E_1, E_2, E_2, E_1) = \langle \nabla_{E_1} \nabla_{E_2} E_2 - \nabla_{E_2}\nabla_{E_1} E_2 - \nabla_{[E_1, E_2]} E_2, E_1 \rangle, \] 
and the trace of the curvature tensor of the normal connection given on $U$ by
\[ K^\perp(x) := \langle R^\perp(E_1,E_2)N_2, N_1 \rangle = \langle \nabla^\perp_{E_1} \nabla^\perp_{E_2} N_2 - \nabla^\perp_{E_2}\nabla^\perp_{E_1} N_2 - \nabla^\perp_{[E_1, E_2]} N_2, N_1 \rangle. \]
Introducing the connection $1$-forms $\{ w_i^j\}_{i,j = 1,2,3,4} $ by
\begin{equation}\label{eq:1-forms} D_{v}e_i := w_i^j(v) e_j \text{ for } v \in T_xM\end{equation}
where $\{E_1, E_2, N_1, N_2\}=\{e_1,e_2,e_3,e_4\}$ and $D$ is the Levi-Civita connection of $X$.
Classical calculations show that 
\[ R(X,Y)E_2 = dw_2^1(X,Y)E_1  \text{ and } R^\perp(X,Y)N_2 = dw_4^3(X,Y) N_1, \]
hence the definition of  $K$ and $K^\perp$ is  independent of the orientation of $E_1, E_2$.\\
The Weingarten equation relates $D$ to the connection $\nabla$ and the second fundamental form $A(v,w) = (D_vw)^\perp$ for two vector fields $v,w$ on $M$ by $D_vw= \nabla_vw + A(v,w)$.
We can express $R$ and $R^\perp$ in terms of the second fundamental form and the curvature operator $R^X$ of the ambient manifold $X^4$ using $A_{ij}$ for $A(E_i,E_j)$
\begin{align}\label{eq:K}
K(x)&= R^X(E_1,E_2,E_2,E_1) + \langle (D_{E_1}E_1)^\perp, (D_{E_2}E_2)^\perp \rangle -\langle (D_{E_1}E_2)^\perp, (D_{E_1}E_2)^\perp \rangle \nonumber\\
&=    R^X(E_1,E_2,E_2,E_1)  + \langle A_{11}, A_{22} \rangle - \langle A_{12}, A_{12} \rangle.
\end{align}
Similarly, one gets for the normal curvature 
\begin{align*}
K^\perp(x)&= R^X(E_1,E_2,N_2,N_1) + \langle (D_{E_1}N_1)^\top, (D_{E_2}N_2)^\top \rangle -\langle (D_{E_1}N_2)^\top, (D_{E_2}N_1)^\top \rangle \\
&=    R^X(E_1,E_2,N_2,N_1)  +\left(  \sum_{j=1,2}\langle A_{1j},N_1\rangle \langle A_{2j}, N_2 \rangle - \langle A_{1j},N_2\rangle \langle A_{2j}, N_1 \rangle \right).
\end{align*}
Observe that the second part can be expressed as $\langle \sum_{j=1,2} A_{1j}\wedge A_{2j} , N_1 \wedge N_2 \rangle$. Introducing the trace free part $A_{ij}^\circ = A_{ij} - \frac{1}{2} H g_{ij}$ using $A^\circ_{11} + A^\circ_{22} =0, A_{12}=A^\circ_{12}$ the equation for $K^\perp$ simplifies to
\begin{equation}\label{eq:K^perp} K^\perp(x) =  R^X(E_1,E_2,N_2,N_1)  + 2 \langle A^\circ_{11}\wedge A^\circ_{12} ,N_1\wedge N_2 \rangle . \end{equation}
Recall that the Euler number of the normal bundle can be expressed similar to the Gauss-Bonnet formula \cite{Milnor} as
\begin{align}\label{hf}
 e(\nu) =\frac{1}{2\pi} \int_{M} K^\perp.
\end{align}
As a corollary of these calculations we obtain a classical inequality by Wintgen. This inequality was known to be true for oriented surfaces. We extend the result to non-orientable surfaces.

\begin{theorem}[Wintgen \cite{Wintgen}] \label{Wintgen}
Let $M$ be a closed manifold of dimension two (orientable or non-orientable) and Euler characteristic $\chi$. Consider an immersion $f:M\to \R^4$ and denote by $e(\nu)$ the Euler normal number of $f$. Then we have that
\begin{align}\label{Wintgenineq}
\W(f)\geq 2\pi (\chi+ |e(\nu)|)
\end{align} 
and equality holds if and only if 
\begin{equation}\label{eq:Wintgenineq}  \abs{A^\circ_{11}}^2 = \abs{A^\circ_{12}}^2, \langle A^\circ_{11}, A^\circ_{12} \rangle = 0 \text{ and $K^\perp$ does not change sign. }\end{equation}
\end{theorem}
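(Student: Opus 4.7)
The plan is to establish a pointwise algebraic inequality of the form
\[\tfrac{1}{4}\abs{H}^2 \geq K + \abs{K^\perp}\]
and then integrate. Since this inequality only involves the functions $K$, $K^\perp$ (or $\abs{K^\perp}$) and $\abs{H}^2$, all of which are orientation-independent, the non-orientable case will reduce to the orientable one once we control the integral of $\abs{K^\perp}$.

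First I would exploit the Gauss equation \eqref{eq:K}, noting $R^X\equiv 0$ in $\R^4$, to express $\tfrac14\abs{H}^2-K$ purely in terms of the tracefree second fundamental form. Writing $A_{ij}=A^\circ_{ij}+\tfrac12 H g_{ij}$ and using $A^\circ_{11}+A^\circ_{22}=0$, a short computation gives the identity
\[\tfrac{1}{4}\abs{H}^2 - K \;=\; \abs{A^\circ_{11}}^2 + \abs{A^\circ_{12}}^2.\]
Next I would estimate $\abs{K^\perp}$ from \eqref{eq:K^perp}. Because $N_1,N_2$ span the two-dimensional normal space, the bivector $A^\circ_{11}\wedge A^\circ_{12}$ is a scalar multiple of $N_1\wedge N_2$, so Cauchy--Schwarz in $\Lambda^2 N_xM$ is in fact an equality and
\[\abs{K^\perp} = 2\,\abs{A^\circ_{11}\wedge A^\circ_{12}} = 2\sqrt{\abs{A^\circ_{11}}^2\abs{A^\circ_{12}}^2 - \langle A^\circ_{11},A^\circ_{12}\rangle^2}.\]
Dropping the nonnegative inner-product term and applying AM--GM to $2\abs{A^\circ_{11}}\abs{A^\circ_{12}}$ yields $\abs{K^\perp}\le \abs{A^\circ_{11}}^2+\abs{A^\circ_{12}}^2$. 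Combined with the identity above, this gives the pointwise inequality $\tfrac14\abs{H}^2\ge K+\abs{K^\perp}$.

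The integration step is where the non-orientability requires a comment. Integrating $K$ against the (unoriented) area measure gives $2\pi\chi(M)$ by Gauss--Bonnet, which holds equally for non-orientable closed surfaces. For $K^\perp$, since flipping a local tangent orientation flips the induced normal orientation and hence the sign of $K^\perp$, the function $K^\perp$ is not globally defined on $M$, but $\abs{K^\perp}$ and the $2$-density $K^\perp\,d\mu$ are. Passing to the oriented double cover $q:\tilde M\to M$ and the lifted immersion $\tilde f=f\circ q$, the pointwise quantities pull back, $\W(\tilde f)=2\W(f)$, $\chi(\tilde M)=2\chi(M)$, and \eqref{hf} applied to $\tilde f$ gives $\int_{\tilde M}K^\perp\,d\mu=4\pi\,e(\nu)$ with our normalization of $e(\nu)$. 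The elementary bound $\int\abs{K^\perp}\ge \bigl|\int K^\perp\bigr|$ on $\tilde M$ then yields $\int_M\abs{K^\perp}\,d\mu\ge 2\pi\abs{e(\nu)}$, and integrating the pointwise inequality finishes \eqref{Wintgenineq}.

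For equality, pointwise equality forces equality in AM--GM ($\abs{A^\circ_{11}}=\abs{A^\circ_{12}}$) and in dropping the inner-product term ($\langle A^\circ_{11},A^\circ_{12}\rangle=0$), while equality in $\int\abs{K^\perp}\ge |\int K^\perp|$ on the double cover forces $K^\perp$ to be of one sign, equivalent to the stated condition that $K^\perp$ does not change sign on $M$. The only delicate part of the plan is keeping the sign conventions and the factor of two consistent when translating $e(\nu)$ between $M$ and $\tilde M$; everything else is a direct consequence of \eqref{eq:K}, \eqref{eq:K^perp} and Cauchy--Schwarz.
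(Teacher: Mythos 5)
Your proof is correct and follows essentially the same pointwise route as the paper: the Gauss identity $\tfrac14\abs{H}^2 - K = \abs{A^\circ_{11}}^2 + \abs{A^\circ_{12}}^2$, the exact computation $\abs{K^\perp} = 2\abs{A^\circ_{11}\wedge A^\circ_{12}}$, the two elementary inequalities (dropping the inner product and AM--GM), integration, then Gauss--Bonnet and $\int K^\perp = 2\pi e(\nu)$, with the equality discussion matching as well.

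The one place you deviate is the detour through the oriented double cover for the $K^\perp$ integration, and it rests on a premise that is actually false. You claim that ``flipping a local tangent orientation flips the induced normal orientation and hence the sign of $K^\perp$,'' so that $K^\perp$ is not globally defined on a non-orientable $M$. In fact $K^\perp = \langle R^\perp(E_1,E_2)N_2,N_1\rangle$ is computed relative to a frame $\{E_1,E_2,N_1,N_2\}$ that is \emph{positively oriented in $\R^4$}; reversing the tangent orientation then forces reversal of the normal orientation, and the two sign changes cancel (swap $E_1\leftrightarrow E_2$, $N_1\leftrightarrow N_2$: $R^\perp(E_2,E_1)= -R^\perp(E_1,E_2)$ and $\langle\,\cdot\,N_1,N_2\rangle = -\langle\,\cdot\,N_2,N_1\rangle$). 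This is exactly the remark the paper makes after introducing the connection $1$-forms: $K$ and $K^\perp$ are independent of the choice of local tangent orientation. Consequently $K^\perp$ is a globally defined function on $M$ (depending only on the fixed orientation of $\R^4$), formula \eqref{hf} applies directly on $M$, and the passage to $\tilde M$ is unnecessary. Your double-cover computation is nonetheless numerically consistent (it uses $e(\nu_{\tilde f}) = 2\,e(\nu_f)$) and yields the right bound, so there is no gap in the final result -- only a misstated justification for one of the steps.
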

\begin{proof}
 The proof for the orientable case can be found in \cite{Wintgen}. 
 Note that in this case $e(\nu) =2 I$ (see \cite{Lashof}), where $I$ is the self-intersection number due to Whitney, see \cite{Whitney}. And we have the equality $\chi = 2- 2p$, where $p$ is the genus of $M$. \\
 The general case follows from \eqref{eq:K}  and \eqref{eq:K^perp} and the flatness of $\R^4$. Equality \eqref{eq:K} becomes $K=\langle A_{11}, A_{22}\rangle - \langle A_{12}, A_{12}\rangle$ and so $\abs{ H}^2 = \abs{A}^2 + 2K$. Together with $\abs{A^\circ}^2 = \abs{A}^2 - \frac{1}{2}\abs{H}^2$ we have 
\[ \frac{1}{2} \abs{ H}^2 = 2K + \abs{A^\circ}^2. \]
Equation \eqref{eq:K^perp} becomes $K^\perp = 2 \langle A^\circ_{11}\wedge A^\circ_{12} ,N_1\wedge N_2 \rangle$ and we can estimate 
\begin{align*}
\abs{K^\perp} &=  2 \abs{ A^\circ_{11} \wedge A^\circ_{12}} \abs{N_1 \wedge N_2} = 2 \left( \abs{A^\circ_{11}}^2\abs{A^\circ_{12}}^2 - \langle A^\circ_{11}, A^\circ_{12} \rangle^2 \right)^{\frac12} \\
&\le 2 \abs{A^\circ_{11}}\abs{A^\circ_{12}} \le \abs{A^\circ_{11}}^2 + \abs{A^\circ_{12}}^2 = \frac12 \abs{A^\circ}^2
\end{align*}
with equality if and only if the first part of \eqref{eq:Wintgenineq} holds.
Combining both gives
\[\frac{1}{2} \abs{H}^2 = 2K + \abs{A^\circ}^2 \ge 2 K + 2 \abs{K^\perp} .\]
Multiplying by $\frac12$  and integrating over $M$ gives 
\[ \W(f) \ge \int_{M} K + \int_{M} \abs{K^\perp} \ge  \int_{M} K + \left| \int_{M} K^\perp \right|\]
with equality if and only if $K^\perp$ does not change sign.
\end{proof}


\begin{remark}
 As we are interested in the case $p=2$, i.e.\ $N= \R P^2\sharp \R P^2$ is a Klein bottle, the inequality above does not give us any information about the Willmore energy in the case $e(\nu)=0$. But we get information about the energy if the immersion is an embedding due to the following theorem.
\end{remark}

\begin{theorem}[Whitney, Massey \cite{Massey}]  \label{Whitney}
 Let $N$ be a closed, connected, non-orientable manifold of dimension two with Euler characteristic $\chi$. Consider an embedding $f:N\to \R^4$ with Euler normal number $e(\nu)$. Then $e(\nu)$ can take the following values:
 \begin{align*}
  -4 + 2\chi, 2\chi, 2\chi + 4, 2 \chi + 8,...,4-2\chi.
 \end{align*}
Furthermore, any of these possible values is attained by an embedding of $N$ into $\R^4$.
 
\end{theorem}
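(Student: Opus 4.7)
The plan is to prove this classical result of Whitney and Massey in three stages: establish the congruence $e(\nu) \equiv 2\chi \pmod 4$, derive the upper bound $|e(\nu)| \leq 4 - 2\chi$, and realize each admissible value by an explicit embedding.

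For the congruence I would exploit that since $f$ is an embedding into flat $\R^{4}$, the pullback $f^{\ast}T\R^{4} = TN \oplus \nu N$ is a trivial rank-four bundle. The Whitney product formula therefore relates the Stiefel--Whitney classes of $\nu N$ to those of $TN$, forcing $w_{1}(\nu) = w_{1}(TN)$ and making $w_{2}(\nu)$ intrinsically computable from $TN$. Evaluating via the Wu formula on $N$ determines $w_{2}(\nu)$ modulo $2$ in terms of $\chi(N)$; since $TN \oplus \nu$ is orientable, $e(\nu)$ is a well-defined integer whose reduction modulo $2$ coincides with $w_{2}(\nu)$ paired with the $\Z_{2}$-fundamental class, and a refined sign analysis upgrades this to $e(\nu) \equiv 2\chi \pmod 4$. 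For the upper bound I would follow Massey's self-intersection argument: a generic section of $\nu N$ produces a small perturbation $f_{\epsilon}$ of $f$, and the algebraic intersection of $f_{\epsilon}(N)$ with $f(N)$ (signs prescribed by the ambient and twisted orientations) equals $e(\nu)$. Because $f$ is injective, these intersections localize near $f(N)$ and can be counted combinatorially against a generic normal field with isolated zeros, yielding $|e(\nu)| \leq 4 - 2\chi$.

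For realization, write $N = \R P^{2} \sharp \cdots \sharp \R P^{2}$ with $p = 2 - \chi$ summands and let $V_{+}, V_{-}: \R P^{2} \to \R^{4}$ denote the Veronese embedding and its mirror image, with $e(\nu) = +2$ and $-2$ respectively. The connected sum in $\R^{4}$ of $k$ copies of $V_{+}$ and $p-k$ copies of $V_{-}$, joined by thin unknotted tubes, is an embedding whose normal Euler number is additive across the summands and thus equals $2k - 2(p-k) = 2(2k - p)$. As $k$ ranges over $\{0, 1, \ldots, p\}$ this produces precisely the claimed list $\{-4 + 2\chi, 2\chi, 2\chi + 4, \ldots, 4 - 2\chi\}$. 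The hardest step is the sharp upper bound: the congruence is a routine characteristic-class computation, but Massey's inequality requires the delicate combinatorial analysis of self-intersections that rules out the additional regular-homotopy classes of immersions in which no embedding can be realized; verifying additivity of $e(\nu)$ under connected sum in the realization step likewise demands careful tracking of signs and of the twisted orientation bundle.
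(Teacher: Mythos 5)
The paper does not actually prove this statement; it is cited as a classical theorem from \cite{Massey} and \cite{Hirsch}. I will therefore evaluate your sketch on its own merits, and there are two serious gaps.

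First, the congruence $e(\nu)\equiv 2\chi\pmod 4$. Your Stiefel--Whitney computation, done carefully, gives nothing beyond mod $2$: the Whitney product formula from $TN\oplus\nu\cong\varepsilon^4$ yields $w_1(\nu)=w_1(TN)$ and $w_2(\nu)=w_2(TN)+w_1(TN)^2$, and the Wu formula on a closed surface forces $w_2(TN)=w_1(TN)^2$, so $w_2(\nu)=0$ for \emph{every} closed surface embedded in $\R^4$. This tells you only $e(\nu)\equiv 0\pmod 2$, independently of $\chi$, and is strictly weaker than the mod $4$ statement. ``A refined sign analysis upgrades this to mod $4$'' is not a proof: the mod $4$ congruence is a genuine theorem of Whitney and requires an argument that actually sees the integral twisted Euler class (e.g.\ via the branched double cover of $S^4$ along $f(N)$, or a Rokhlin/Arf-type invariant), not the $\Z_2$-reduction, which you have just shown vanishes.

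Second, and more fundamentally, the sharp bound $\lvert e(\nu)\rvert\le 4-2\chi$ is precisely the conjecture Whitney stated in 1941 and left open; Massey's 1969 proof is \emph{not} a combinatorial self-intersection count. He passes to the double cover of $S^4$ branched along the embedded surface, applies the Atiyah--Singer $G$-signature theorem to the covering involution, and extracts the bound from the computation of the $\pm 1$-eigenspaces of the induced action on $H_2$. The procedure you describe---perturbing $f$ by a normal section and counting signed intersections---merely \emph{computes} $e(\nu)$; it produces no a priori estimate on its magnitude, since a generic normal field can have many zeros. Without a substitute for the $G$-signature input (or one of its later replacements, such as Rokhlin/Guillou--Marin or Kamada's quandle-cocycle argument), this central step is missing. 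Your realization step (connected sums of $k$ Veronese embeddings and $p-k$ reflected Veronese embeddings, with additivity of $e(\nu)$ along unknotted tubes) is sound and matches the standard construction, but it only establishes that the listed values \emph{occur}, not that no other value can.
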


\begin{corollary}\label{cor:Wgeq8pi}
 Let $N= \R P^2\sharp \R P^2$ be a Klein bottle. Consider an immersion $f:N\to \R^4$ that is regularly homotopic to an embedding, and denote by $e(\nu)$ the Euler normal number of $f$. If $e(\nu)\neq 0$ then $\W(f)\geq 8\pi$.
\end{corollary}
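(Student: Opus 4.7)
The plan is to combine Wintgen's inequality from Theorem~\ref{Wintgen} with the classification of possible normal Euler numbers from Theorem~\ref{Whitney}. Since the Klein bottle has Euler characteristic $\chi=0$, Wintgen's estimate reduces to $\W(f)\geq 2\pi\,|e(\nu)|$, and it therefore suffices to show that $|e(\nu)|\geq 4$ whenever $e(\nu)\neq 0$ and $f$ is regularly homotopic to an embedding.

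The key observation is that the normal Euler number is a regular-homotopy invariant. Indeed, a regular homotopy $F:N\times[0,1]\to\R^4$ yields a continuous family of normal bundles $\nu_t$ on $N$, all mutually isomorphic; consequently any characteristic number of the normal bundle (in particular $e(\nu_t)$, computed through the orientation double cover if one wishes to avoid orientability issues) is constant in $t$. Hence if $f$ is regularly homotopic to an embedding $g:N\to\R^4$, then $e(\nu_f)=e(\nu_g)$.

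Finally, Theorem~\ref{Whitney} applied to $g$ with $\chi(N)=0$ gives the list of admissible values $e(\nu_g)\in\{-4,0,4\}$. Under the hypothesis $e(\nu_f)=e(\nu_g)\neq 0$ we therefore have $|e(\nu_f)|=4$, and the Wintgen inequality yields $\W(f)\geq 2\pi\cdot 4=8\pi$. There is no serious obstacle here; the statement is an almost immediate synthesis of Theorems~\ref{Wintgen} and~\ref{Whitney}, and the only point that perhaps deserves a sentence of justification is the regular-homotopy invariance of $e(\nu)$, which is classical.
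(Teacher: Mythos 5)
Your proposal is correct and follows essentially the same route as the paper: Wintgen's inequality with $\chi=0$, the Whitney–Massey classification of normal Euler numbers for embedded Klein bottles, and the regular-homotopy invariance of $e(\nu)$. The only (minor) difference is cosmetic: you justify the regular-homotopy invariance by the continuity of the family of normal bundles, whereas the paper cites Hirsch's stronger theorem that the normal class is a \emph{complete} invariant of regular homotopy; your more elementary argument gives exactly the one direction that is actually needed, so there is no gap.
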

\begin{proof}
Due to \cite[Theorem~8.2]{Hirsch} two immersions $f,g:N\to\R^4$ are regularly homotopic if and only if they have the same normal class. 
By assumption, the given immersion $f$ is regularly homotopic to an embedding $g:N\to\R^4$. Theorem~\ref{Whitney} and $\chi(N)=2-p=0$ implies that $e(\nu_f)=e(\nu_g)\in\{-4,0,4\}$. As $e(\nu)\neq 0$ we use Theorem~\ref{Wintgen} to see that $\W(f)\geq 8\pi$.
\end{proof}

\begin{remark}
In the case of genus one, we get by Theorem~\ref{Whitney} that the Euler normal number of the the Veronese embedding $f:\R P^2\to\R^4$  must be $e(\nu)\in\{-2,+2\}$. By the work of Hirsch \cite{Hirsch} we get that there are exactly two regular homotopy classes of surfaces of $\R P^2$-type containing an embedding.  Each regular homotopy class is represented by a Veronese embedding, one is the reflected surface of the other.
\end{remark}


For the construction of immersed Klein bottles with $\W(f)=8\pi$ and $e(\nu)\in\{-4,+4\}$ we need the theory of twistor holomorphic immersions. They were studied in \cite{Friedrich}, and we follow that paper.

\begin{definition} \label{twistorspace}
 Let $(X^4,h)$ be an oriented, $4-$dimensional Riemannian manifold. Consider a point $x\in X^4$ and let $P_x$ be the set of all linear maps $J:T_xX^4\to T_xX^4$ satisfying the following conditions:
 \begin{enumerate}
  \item $J^2=- Id$,
  \item $J$ is compatible with the metric $h$, i.e.\ $J$ is an isometry,
  \item $J$ preserves the orientation,
  \item defining the $2$-form $\Omega(t_1,t_2)\defi h( Jt_1,t_2)$, $t_1,t_2 \in T_xX^4$, then $ -\Omega \wedge \Omega$ equals the given orientation of $X^4$.
 \end{enumerate}
The set $P\defi \bigcup_{x\in X^4}P_x$ is a $\mathbb C P^1-$fiber bundle over $X^4$ (note $\QR{SO(4)}{U(2)}\cong\mathbb C P^1$). We call $P$ the \emph{twistor space of $X^4$} and denote by $\pi:P \to X^4$ the projection of the bundle.
\end{definition}

\begin{definition}[The lift of an immersion into the twistor space] \label{lift}
 Let $M$ be an oriented manifold of dimension two and $f:M\to X^4$ an immersion. We decompose the tangent space $T_{f(x)} X^4$ of the ambient manifold into the sum of the tangent space $T_x M$ and the normal space $N_x M$. 
 Let $E_1,E_2$ be a positively oriented orthonormal basis of $T_x M$ and $N_1,N_2$ an orthonormal basis of $N_x M$ such that $\{E_1,E_2,N_1,N_2\}$ is a positively oriented basis of $T_{f(x)} X^4$. We define the \emph{lift of the immersion $f$} by
 \begin{align}\label{eq:twisterconditions}
  F(x)&: T_{f(x)} X^4 \to T_{f(x)} X^4,\nonumber\\
  F(x)E_1&= E_2,\ \ 
  F(x)E_2 = - E_1, \\
  F(x) N_1 &= -N_2,\ \
  F(x) N_2 = N_1,\nonumber
 \end{align}
i.e.\ $F(x)$ is the rotation around the angle $\frac{\pi}{2}$ in the positive (negative) direction on $T_x M$ (on $N_x M$). In this way\footnote{The frame $\{E_1, E_2, N_1, N_2\}$ gives a local bundle chart of the pullback bundle $f^*P$ around $x$. The defined linear map $F(x): T_{f(x)}X^4 \to T_{f(x)}X^4$ is an element of the fiber $P_{f(x)}$. Hence we can either consider $F$ to be a map into the pullback bundle $f^*(P)$ being the identity on $M$ or as a map into $P$ by $\pi\circ F(x):=f(x)$. We follow the classical line and think of $F$ as a map into $P$.}, $F: M \to P$ is a lift of $f$.
\end{definition}

\begin{definition}[Twistor holomorphic] 
There exists an almost complex structure $Y$ on $P$ coinciding with the canonical complex structure on the fibers $\QR{SO(4)}{U(2)}\cong\C P^1$. For a point $J \in P$ the horizontal part $T^H_JP$ is determined by the Levi-Civita connection on $X^4$ and the complex structure on it is $d\pi^{-1}Jd\pi$, \cite[Section~1]{Friedrich}.
The pair $(P,Y)$ is a complex manifold if and only if the manifold $X^4$ is self-dual, see~\cite{Atiyah}. Let $M$ be an oriented two-dimensional manifold and $f:M\to X^4$ an immersion. 
 Denote by $I: T_x M \to T_x M$ the complex structure of $M$ with respect to the induced metric $f^{\ast} h$. The immersion $f$ is called \emph{twistor holomorphic} if the lift $F: (M,I)\to (P,Y)$ is holomorphic, i.e.\ $dF(I(t))= Y(dF(t))$.
\end{definition}

\begin{remark}
The couple $(M,I)$ from definition above is a Riemann surface and $I$ only depends on the conformal class of $f^*h$. The map $F$ has the property that for any conformal coordinates $\varphi: U \subset \C \to M$ the map $F\circ \varphi$ is holomorphic. Furthermore, the metric $( f\circ \varphi)^*h$ is conformal to the standard metric on $\C$. \\
On the other hand, if a the lift $F: M \to P$ of a map $f:M \to X^4$ from a Riemann surface $M$ has the property that $F\circ \varphi: U \to P$ is holomorphic for any conformal coordinates $\varphi: U \to M$ it is not hard to check that $F$ is twistor holomorphic as defined above. In fact, this is the picture we will use in the following. 
\end{remark}

\begin{remark}
 As we only want to use the construction of twistor spaces for $X^4=\R^4$ we have more information about the structure of $P$: Using an isomorphism $\QR{SO(4)}{U(2)}\cong \C P^1 \cong \Szw$ the twistor space $P$ of $\R^4$ is (as a set) the trivial $\Szw-$fiber bundle over $\R^4$, i.e.\ $P= \R^4 \times \Szw$ (see \cite[Section~4]{Atiyah}). On the other hand $P$ carries a holomorphic structure which is not the standard holomorphic structure on $\C^2\times \Szw$ but a twisted one:  
 Let $H$ be the standard positive line bundle over $\C P^1$, then $P$ is isomorphic to $H \oplus H$ (the Whitney sum of $H$ with itself), see \cite[Section~4]{Atiyah}. This is a bundle over $\Szw$ with projection $p: H \oplus H \to \Szw$. Thus, we are in the following situation:
  \[
   \begin{xy}
    \xymatrix{
		& M \ar[dl]_{p\circ F} \ar[d]^F  \ar[dr]^f& \\
	\Szw & H\oplus H \cong P \ar[l]^/0.8em/p \ar[r]_/0.7em/\pi& \R^4
    }
   \end{xy}
  \]

\end{remark}

\begin{proposition}[\cite{Friedrich}]\label{prop:Twistor holomorphic}
Let $f: M \to X^4$ be an immersion of an oriented two dimensional manifold $M$ then the following conditions are equivalent
\begin{enumerate}
\item $f$ is twistor holomorphic;
\item the connection forms defined above satisfy on every neighborhood $U$ 
\[ w_2^4 + w_1^3 - \star w_2^3 + \star w_1^4 =0 \]
where $\star$ is the Hodge star operator with respect to the induced metric $f^*h$;
\item for all $x \in U$ 
\[ F(x)A^\circ_{11}(x) = A^\circ_{12}(x). \] \label{xi}
\end{enumerate}
\end{proposition}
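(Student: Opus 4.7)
The plan is to work locally in the orthonormal frame $\{E_1,E_2,N_1,N_2\}$ introduced above and to compute the covariant derivative of $F$, viewed as an endomorphism field along $f$, in terms of the connection $1$-forms $w_i^j$ of \eqref{eq:1-forms}. Applying the Leibniz rule $(\nabla_{E_i}F)e_j=D_{E_i}(Fe_j)-F(D_{E_i}e_j)$ to each basis vector, using $FE_1=E_2$, $FE_2=-E_1$, $FN_1=-N_2$, $FN_2=N_1$ together with $w_i^i=0$ and $w_j^i=-w_i^j$, one finds that the tangential components cancel and only the mixed tangent-normal terms survive. Explicitly,
\begin{align*}
(\nabla_{E_i}F)E_1 &= (w_2^3-w_1^4)(E_i)\,N_1 + (w_2^4+w_1^3)(E_i)\,N_2, \\
(\nabla_{E_i}F)N_1 &= (w_1^4-w_2^3)(E_i)\,E_1 + (w_2^4+w_1^3)(E_i)\,E_2,
\end{align*}
and analogously for $E_2$, $N_2$. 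In particular $\nabla F$ anticommutes with $F$, reflecting the fact that $\nabla F$ takes values in the vertical tangent space $T^V_{F(x)}P$.

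For (1) $\Leftrightarrow$ (2) I would use the Levi-Civita splitting $dF=df\oplus\nabla F$ into horizontal and vertical parts. On the horizontal part the condition $Y\,df(v)=df(Iv)$ is automatic, since $F(x)$ restricted to $T_xM$ coincides with the induced complex structure $I$ by \eqref{eq:twisterconditions}. The almost complex structure $Y$ on the fibre $P_{f(x)}\cong\C P^1$, viewed on the $F$-anticommuting endomorphisms that form $T^V_{F(x)}P$, is just left multiplication by $F$. Hence twistor holomorphicity reduces to the single vector-valued identity $F\circ\nabla_{E_1}F=\nabla_{E_2}F$. Reading off the $N_1$- and $N_2$-components of this identity using the formulas above yields two scalar equations which, once one fixes the sign convention $\star E_1^{*}=-E_2^{*}$, $\star E_2^{*}=E_1^{*}$ of the Hodge star on $M$, are exactly the values at $E_1$ and $E_2$ of the $1$-form $w_2^4+w_1^3-\star w_2^3+\star w_1^4$.

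The implication (2) $\Leftrightarrow$ (3) is a purely algebraic translation. Expanding $A_{ij}=(D_{E_i}E_j)^\perp$ gives $A_{ii}=w_i^3(E_i)N_1+w_i^4(E_i)N_2$ and $A_{12}=w_2^3(E_1)N_1+w_2^4(E_1)N_2$, and the symmetry $A(X,Y)=A(Y,X)$ yields $w_2^3(E_1)=w_1^3(E_2)$ and $w_2^4(E_1)=w_1^4(E_2)$. Subtracting one half of $H=A_{11}+A_{22}$ one obtains
\[
A^{\circ}_{11}=\tfrac{1}{2}(w_1^3(E_1)-w_2^3(E_2))\,N_1+\tfrac{1}{2}(w_1^4(E_1)-w_2^4(E_2))\,N_2.
\]
Applying $F$ on the normal bundle via $FN_1=-N_2$, $FN_2=N_1$ and equating $FA^{\circ}_{11}$ with $A^{\circ}_{12}$ gives exactly the two scalar equations derived from (2), settling the equivalence.

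The only real obstacle is bookkeeping: the statement (2) is sensitive both to the orientation chosen on the normal bundle in \eqref{eq:twisterconditions} and to the sign convention of the Hodge star. Matching these conventions coherently is essentially the whole content of the calculation. A cleaner coordinate-free reformulation, should one prefer, is to observe that (2) and (3) are equivalent to the vanishing of the complex normal-valued quadratic differential $(A^{\circ}_{11}-iA^{\circ}_{12})(dz)^{2}$ in any local conformal coordinate $z$ on $M$, which exhibits the twistor-holomorphic condition as the holomorphicity of the Hopf-type differential in the normal bundle.
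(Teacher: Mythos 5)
Your proof is correct and follows essentially the same route as the paper: compute $\nabla F$ in the orthonormal frame via the connection $1$-forms, reduce twistor-holomorphicity to a single vertical-component identity, and then match it against the Hodge-star formulation and against $FA^\circ_{11}=A^\circ_{12}$. The only cosmetic difference is that the paper obtains the formula for $\nabla F$ by differentiating the relations $\langle N_i,F(y)E_1\rangle=0$ rather than applying the Leibniz rule to $F$ directly, and it does not record the (nice) closing observation that the condition is the holomorphicity of the normal-bundle-valued Hopf differential.
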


\begin{proof}
Although this proposition corresponds to \cite[Proposition 2]{Friedrich} we give here a direct proof.
Fix a point $x$ and choose an orthonormal frame $\{E_1,E_2,N_1,N_2\}$ in a neighborhood $U$ as in Definition~\ref{lift}. 
As described in Definition~\ref{lift} the lift corresponds to a matrix $F(y) \in SO(4)$ for all $y \in U$. By definition of $F(y)$, being twistor holomorphic is a condition on the vertical part of $T_{f(y)}P$, i.e.
\begin{equation}\label{eq:holomorphic condition1}
 F(x)D_vF(x) = D_{I(v)}F(x) \text{ for all } v \in T_xM.\end{equation}
Observe that  conditions \eqref{eq:twisterconditions} imply $F(y)^2 = - \mathbf{1}$,  $F(y)^t = - F(y)$ (where $A^t$ denotes the transpose of $A$) and so $$DF(y)F(y) = - F(y)DF(y), \ \ DF(y)^t = - DF(y).$$ Therefore $DF(y)$ maps the tangent space $T_yM$ into the normal space $N_yM$. This can be seen as follows: 
$$\langle E_1, DF(y)E_2\rangle = \langle E_1, DF(y)F(y)E_1 \rangle = - \langle E_1, F(y)DF(y)  E_1\rangle = \langle E_2, DF(y)E_1\rangle.$$ But the antisymmetry of $DF(y)$ implies $$\langle E_1, DF(y)E_2\rangle = -\langle E_2, DF(y)E_1\rangle,$$ so $\langle E_1, DF(y)E_2\rangle=0$. Similarly one shows that $\langle N_1, DF(y)N_2\rangle=0$.
Furthermore we have 
\begin{align*}
DF(x)E_2&= DF(x)F(x)E_1 = - F(x)DF(x)E_1\\ 
\langle N_i, DF(x)E_j\rangle &= - \langle E_j, DF(x)N_i\rangle \text{ for } i,j=1,2.
\end{align*}
and conclude that \eqref{eq:holomorphic condition1} is satisfied if and only if
\[  F(x)D_vF(x)E_1 = D_{I(v)}F(x)E_1 \text{ for all } v \in T_xM.\]
To calculate $DF(x)E_1$ we differentiate $0 = \langle N_i, F(y)E_1\rangle$ along $v \in T_xM$ and obtain 
\begin{align*} 0 &=\langle N_i, DF(x)E_1\rangle + \langle D_vN_i, F(x)E_1\rangle - \langle F(x)N_i, D_vE_1\rangle \\
&= \langle N_i, DF(x)E_1\rangle - \langle N_i, D_vE_2\rangle - \sum_{j=1,2} \langle F(x)N_i,N_j\rangle \langle N_j, D_vE_1\rangle\\
&= \langle N_i, DF(x)E_1\rangle -\left( w_2^{i+2}(v) + \sum_{j=1,2} \langle F(x)N_i,N_j\rangle w_1^{j+2}(v)\right) 
\end{align*} 
using the 1-forms introduced in \eqref{eq:1-forms}. We calculate 
\[ D_{\cdot} F(x)E_1 = ( w_2^3 - w_1^4) N_1 + (w_2^4 + w_1^3) N_2, \]
showing the equivalence of (i) and (ii), since
\[  F(x)D_{\cdot}F(x)E_1 - D_{I(\cdot)}F(x)E_1 = (w_2^4 + w_1^3 - \star w_2^3 + \star w_1^4) N_1 + ( - w_2^3 + w_1^4 +\star w_2^4 + \star w_1^3) N_2. \]
It remains to check that (ii) is equivalent to (iii).  Evaluating (ii) in $E_1, E_2$, recalling $w_i^{k+2}(E_j)= \langle N_k, D_{E_j}E_i \rangle = \langle N_k, A_{ij}\rangle$ we have
\begin{align*}
\langle N_2, A_{12}\rangle + \langle N_1, A_{11}\rangle - \langle N_1, A_{22}\rangle + \langle N_2, A_{12}\rangle = 2 \left( \langle N_2, A^\circ_{12}\rangle + \langle N_1, A^\circ_{11}\rangle \right),\\
\langle N_2, A_{22}\rangle + \langle N_1, A_{12}\rangle + \langle N_1, A_{12}\rangle - \langle N_2, A_{11}\rangle = 2 \left( -\langle N_2, A^\circ_{11}\rangle + \langle N_1, A^\circ_{12}\rangle \right).
\end{align*}
This shows that (ii) holds if and only if $F(x) A^\circ_{11} = A^\circ_{12}$.
\end{proof}
%
%

\begin{corollary}\label{cor:eulernumber}
Let $M$ be an oriented  two dimensional  manifold and  $f:M\to \R^4$ an immersion into $\R^4$ then the following are equivalent 
\begin{enumerate}
\item $f$ is twistor holomorphic;
\item \[ \W(f) = 2\pi\left(\chi - e(\nu)\right)= 2\pi\left(\chi + |e(\nu)|\right),\]
i.e.\ equality holds in \eqref{Wintgenineq} and $e(\nu)\leq 0$. \label{gleichheit}
\end{enumerate}
\end{corollary}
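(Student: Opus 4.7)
The plan is to reduce the claim to the pointwise characterization of twistor holomorphicity from Proposition \ref{prop:Twistor holomorphic}, namely $F(x)A^\circ_{11}(x)=A^\circ_{12}(x)$, and match this algebraic identity with the pointwise equality cases of Wintgen's inequality \eqref{eq:Wintgenineq} together with the sign of $K^\perp$.

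For $(i)\Rightarrow(ii)$ I would first note that $F(x)$ restricts to a $\pi/2$-rotation on $N_xM$ (with the sign prescribed by $FN_1=-N_2$, $FN_2=N_1$), so in particular it is an isometry. The identity $A^\circ_{12}=FA^\circ_{11}$ then immediately forces $|A^\circ_{11}|=|A^\circ_{12}|$ and $\langle A^\circ_{11},A^\circ_{12}\rangle=\langle A^\circ_{11},FA^\circ_{11}\rangle=0$. Plugging this into \eqref{eq:K^perp} and expanding $A^\circ_{11}=aN_1+bN_2$ in the normal frame gives a wedge computation with a fixed sign, producing $K^\perp=-2|A^\circ_{11}|^2\le 0$. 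Thus $K^\perp$ does not change sign, the equality conditions in Theorem \ref{Wintgen} are fulfilled, and \eqref{hf} yields $e(\nu)\le 0$, so $\W(f)=2\pi(\chi+|e(\nu)|)=2\pi(\chi-e(\nu))$.

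For $(ii)\Rightarrow(i)$ I would start from equality in \eqref{Wintgenineq}, which is equivalent to the three conditions in \eqref{eq:Wintgenineq}. Since $N_xM$ is two-dimensional, the equalities $|A^\circ_{11}|=|A^\circ_{12}|$ and $\langle A^\circ_{11},A^\circ_{12}\rangle=0$ force $A^\circ_{12}(x)=\epsilon(x)\,F(x)A^\circ_{11}(x)$ for some sign $\epsilon(x)\in\{\pm 1\}$ at each point where $A^\circ_{11}\neq 0$; on the zero set of $A^\circ_{11}$ both sides vanish and there is nothing to check. The same wedge calculation as in the forward direction now gives $K^\perp(x)=-2\epsilon(x)|A^\circ_{11}(x)|^2$, so the pointwise sign of $\epsilon$ is read off from that of $K^\perp$. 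Combining the non-sign-changing property of $K^\perp$ with $\int K^\perp=2\pi e(\nu)\le 0$ forces $K^\perp\le 0$ globally, hence $\epsilon\equiv +1$ on $\{A^\circ_{11}\neq 0\}$. Therefore $A^\circ_{12}=FA^\circ_{11}$ everywhere on $M$, and Proposition \ref{prop:Twistor holomorphic} identifies $f$ as twistor holomorphic.

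The only non-routine step is pinning down the sign $\epsilon(x)$ in the backward direction; this is resolved globally by combining the hypothesis that $K^\perp$ does not change sign with the hypothesis $e(\nu)\le 0$. The rest is a direct translation between the algebraic identity characterizing twistor holomorphicity and the trace-free data $(A^\circ_{11},A^\circ_{12})$ entering Wintgen's inequality.
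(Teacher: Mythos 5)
Your proposal is correct and follows essentially the same route as the paper's proof: both reduce twistor holomorphicity to the pointwise identity $F(x)A^\circ_{11}(x)=A^\circ_{12}(x)$ via Proposition~\ref{prop:Twistor holomorphic}, compute $K^\perp=-2\abs{A^\circ_{11}}^2$ from it, and use the sign of $K^\perp$ (together with $e(\nu)\le 0$) to rule out the opposite sign $-FA^\circ_{11}=A^\circ_{12}$ in the converse direction. Your writeup is slightly more careful than the paper's in making the pointwise sign $\epsilon(x)$ explicit and in handling the zero set of $A^\circ_{11}$, but the argument is the same.
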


\begin{proof}
The equivalence follows from the fact that \eqref{eq:Wintgenineq} is equivalent to condition (iii) in the previous proposition i.e.\ $F(x) A^\circ_{11} = A^\circ_{12}$, because in this case 
\begin{align*}
K^\perp &= 2 \langle A^\circ_{11}\wedge A^\circ_{12} ,N_1\wedge N_2 \rangle =2 \left(\langle A^\circ_{11}, N_1\rangle \langle A^\circ_{12}, N_2\rangle - \langle A^\circ_{11}, N_2\rangle\langle A^\circ_{12}, N_1\rangle\right)\\
&= - 2 \abs{A^\circ_{11}}^2 = - 2 \abs{A^\circ_{11}}\abs{A^\circ_{12}}.
\end{align*}
If  \eqref{eq:Wintgenineq} holds then either  $F(x) A^\circ_{11} = A^\circ_{12}$ or  $-F(x) A^\circ_{11} = A^\circ_{12}$ but since  $K^\perp$ must be non positive  such that equality holds, the second is excluded.
\end{proof}

\begin{remark}
 As the Veronese surface satisfies $\W(f)= 6\pi$ and $e(\nu)=-2$ (when the orientation of $\R^4$ is chosen appropriately) we get that the oriented double cover $\tilde f: \mathbb S^2\to\R^4$ is twistor holomorphic.
\end{remark}

%

Friedrich considered in \cite{Friedrich} twistor holomorphic immersions into $\R^4$ in detail. He used the special structure of $P$ to prove a kind of ``Weierstrass representation'' for such immersions.

\begin{theorem}[Friedrich \cite{Friedrich}] \label{weierstr}
 Let $M$ be an oriented two-dimensional manifold. Let $P\cong H\oplus H$ be the twistor space of $\R^4$ (see remark after Definition~\ref{lift}). 
 A holomorphic map $F:M\to P$ corresponds to a triple $(g, s_1, s_2)$, where 
 \begin{enumerate}
  \item $g: M\to \Szw$ is a meromorphic function,
  \item $s_1,s_2$ are holomorphic sections  of the bundle $g^\ast (H)$ over $M$.
 \end{enumerate}
Furthermore, there are holomorphic maps $\varphi^i,\psi^i: M_i\to\C$, where $i=1,2$ and  $M_1\defi\{g\neq\infty\}$ and $M_2\defi\{g\neq 0\}$ such that
\begin{align*}
s_1&= (\varphi^1,\varphi^2), \ s_2=(\psi^1,\psi^2) \ \ \text{ with }\\
\varphi^2 &= \frac{1}{g}\varphi^1 \ \text{ and } \ \psi^2 = \frac{1}{g}\psi^1 \ \text{ on } \ M_1\cap M_2.
\end{align*}
A holomorphic map $F:M\to P$ defines a twistor holomorphic immersion $f:M\to\R^4$ via $f=\pi\circ F$ if and only if 
\begin{align}\label{imm}
 |d s_1|+|d s_2|>0.
\end{align}
If (\ref{imm}) is satisfied, then $f$ is given by the formula
\begin{align}\label{weier}
 f=\left(\frac{\varphi^1 \bar g - \bar\psi^1}{1 + |g|^2} , \frac{\bar\psi^1 g + \varphi^1}{1 + |g|^2}\right).
\end{align}
Conversely, if $f$ is given by (\ref{weier}) with $s_1,s_2$ satisfying (\ref{imm}) then $f$ is a twistor holomorphic immersion. Any such twistor holomorphic immersion satisfies the formula
\begin{align}\label{Willm}
 \W(f) = 4 \pi \deg (g).
\end{align}
 
\end{theorem}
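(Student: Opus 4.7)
The plan is to use the explicit bundle structure $P \cong H \oplus H$ over $\C P^1$ provided by Atiyah to decompose a holomorphic map $F \colon M \to P$. First I set $g \defi p \circ F \colon M \to \C P^1 \cong \Szw$, which is holomorphic and hence meromorphic on $M$ when read via stereographic projection. The remaining data of $F$ is a holomorphic section of the pullback $F^{\ast}(H \oplus H) = g^{\ast}H \oplus g^{\ast}H$, giving two holomorphic sections $s_1, s_2$ of $g^{\ast}H$. Choosing the two standard affine charts on $\C P^1$ in which $H$ is trivialized by transition function $1/g$, each $s_i$ is represented by a pair of local holomorphic functions on $M_1 = \{g \neq \infty\}$ and $M_2 = \{g \neq 0\}$, and comparing the two local descriptions yields the claimed compatibility relations $\varphi^2 = (1/g)\varphi^1$ and $\psi^2 = (1/g)\psi^1$.

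Next I would derive the formula \eqref{weier} by making the projection $\pi \colon P \to \R^4$ explicit. Under the identification $\Szw \cong SO(4)/U(2)$, a point $g \in \C P^1$ corresponds to an orthogonal complex structure $J_g$ on $\R^4 \cong \C^2$, and the holomorphic realization of $P$ as $H \oplus H$ over $\C P^1$ encodes the $\R^4$-coordinate as an affine function of the fiber coordinates which is antiholomorphic in $g$. Unwinding this identification yields exactly the expression in \eqref{weier}; the denominator $1 + \abs{g}^2$ arises from the Fubini--Study type normalization needed to pass from homogeneous line bundle coordinates to affine $\R^4$-coordinates.

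With $f = \pi \circ F$ in this explicit form, the immersion condition follows by direct computation. Using the holomorphicity of $g$ and of the local representatives $\varphi^1, \psi^1$, one evaluates $\partial f$ and $\bar\partial f$ and checks that the real differential has rank two precisely when $\abs{ds_1} + \abs{ds_2} > 0$. Conversely, given \eqref{imm}, the same computation shows that $F$ is an immersive lift and $f$ is an immersion, twistor holomorphic by construction since $F$ is holomorphic.

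For the Willmore identity I apply Corollary~\ref{cor:eulernumber}: since $F$ is holomorphic, $f$ is twistor holomorphic, and hence $\W(f) = 2\pi(\chi(M) - e(\nu))$. It remains to show $\chi(M) - e(\nu) = 2\deg(g)$. By Gauss--Bonnet together with \eqref{hf}, this is equivalent to $\frac{1}{2\pi}\int_M (K - K^\perp) = 2\deg(g)$. The cleanest route is to identify $g^{\ast}H$ as a specific holomorphic line bundle on $M$ built from the tangent and normal bundles via the twistor condition $F(x) A^\circ_{11} = A^\circ_{12}$ of Proposition~\ref{prop:Twistor holomorphic}, and then to compute its degree as a Chern number. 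The main obstacle lies precisely in this last identification: one must carefully track how the pullback $g^{\ast}H$ relates to $TM$ and $NM$ under the twistor correspondence, since $\deg(g)$ must simultaneously record both $\chi(M)$ and $-e(\nu)$ with the correct signs. Once this is established, the factor $4\pi$ in \eqref{Willm} follows immediately by combining $\chi - e(\nu) = 2\deg(g)$ with the twistor holomorphic Willmore formula from Corollary~\ref{cor:eulernumber}.
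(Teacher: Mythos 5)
The paper's own ``proof'' of this theorem is merely a citation to Friedrich (``Section~1, Remark~2 and Section~4, Example~4 of [Friedrich]'') together with the remark that $g=p\circ F$, so any attempt at filling in details is by definition a different route. Your first paragraph correctly identifies the decomposition ($g=p\circ F$, the sections $s_1,s_2$ of $g^\ast H$, and the transition-function picture giving $\varphi^2=g^{-1}\varphi^1$ etc.). However, the derivation of the projection formula \eqref{weier} and of the immersion criterion \eqref{imm} are asserted rather than carried out: the explicit form of $\tilde\pi\colon H\oplus H\to\R^4$ is exactly the parametrization $\zeta=\bigl(A\alpha(z)+B\beta(z),\,\bar B\alpha(z)-\bar A\beta(z)\bigr)$ with $\tilde\pi(\zeta)=(A,B)$ that the paper records later in the proof of Proposition~\ref{prop:involution}, and your sketch does not actually invert this to produce \eqref{weier}, nor does it compute $\partial f$ and $\bar\partial f$ to verify that \eqref{imm} is equivalent to the rank condition.

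The genuine gap is in the Willmore formula \eqref{Willm}. You reduce it via Corollary~\ref{cor:eulernumber} to the degree identity $\chi(M)-e(\nu)=2\deg g$. This identity is correct --- it checks on the Veronese double cover ($\deg g=3$, $\chi=2$, $e(\nu)=-4$) and on the Klein bottle double cover ($\deg g=4$, $\chi=0$, $e(\nu)=-8$) --- but, as you yourself flag, you do not prove it. Establishing it requires identifying $g^\ast H$ with a specific line bundle built from $TM$ and $NM$ through the twistor condition $F(x)A^\circ_{11}=A^\circ_{12}$ and computing its first Chern class so that it records both $\chi$ and $-e(\nu)$ with the correct signs; that bundle-theoretic identification is precisely what is missing and is not a routine verification. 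By contrast, Friedrich's original derivation (as referenced in the paper's proof of Proposition~\ref{prop:immersion}) obtains $\W(f)=4\pi\deg g$ directly from a Riemann--Hurwitz-type argument on the meromorphic map $g$, without passing through the Wintgen equality of Corollary~\ref{cor:eulernumber} at all. Your route is conceptually attractive and would give the sign $e(\nu)\le 0$ as a byproduct, but without the degree identity it is not a proof; the Riemann--Hurwitz route is the one that closes.
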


\begin{proof}
 The proof is done in Section~1, Remark~2 and Section~4, Example~4 of \cite{Friedrich}. We remark that the meromorphic function $g$ is defined by $g= p\circ F$.
\end{proof}

\begin{corollary}\label{cor:prop}
 Let $M$ be an oriented two-dimensional manifold and $f:M\to\R^4$ a twistor holomorphic immersion. Let $(g,s_1,s_2)$ be the triple corresponding to the lift $F:M\to P$ (cf. Theorem~\ref{weierstr}). Then the maps $\varphi^i$ and $\psi^i$, $i=1,2$, can be extended  to meromorphic functions $\varphi^i,\psi^i: M \to \Szw$, $i=1,2$, 
 with the following properties: Denote by $S_P(h)\defi \{x\in M: h(x)=\text{north pole} = \infty \}$ the poles and by $S_N(h)\defi\{x\in M: h(x)=\text{south pole}= 0\}$ the zeros of a meromorphic function $h:M\to \Szw$, and let $\ord_h(b)$, $b\in S_P(h)$ or $b\in S_N(h)$, be the order of the poles or zeros of $h$. Then we have that
 \begin{enumerate}
  \item $S_P(\varphi^1)\subset S_P(g) \ $ and $ \ \ord_{\varphi^1}(b)\leq \ord_g(b) \ \forall b \in S_P(\varphi^1)$, \label{h}
  \item $S_P(\varphi^2)\subset S_N(g) \ $ and $ \ \ord_{\varphi^2}(a)\leq \ord_g(a) \ \forall a \in S_P(\varphi^2)$,
  \item  $S_P(\psi^1)\subset S_P(g) \ $ and $ \ \ord_{\psi^1}(b)\leq \ord_g(b) \ \forall b \in S_P(\psi^1)$,
  \item  $S_P(\psi^2)\subset S_N(g) \ $ and $ \ \ord_{\psi^2}(a)\leq \ord_g(a) \ \forall a \in S_P(\psi^2)$.
 \end{enumerate}

\end{corollary}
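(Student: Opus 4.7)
The plan is to use the overlap relation $\varphi^2 = \varphi^1/g$ (and the analogous identity for $\psi$) to meromorphically extend each of the functions $\varphi^i,\psi^i$ across the discrete exceptional set on which they were originally undefined. The input from Theorem~\ref{weierstr} is that $\varphi^1,\psi^1$ are holomorphic on $M_1=\{g\neq\infty\}$ and $\varphi^2,\psi^2$ are holomorphic on $M_2=\{g\neq 0\}$, so \emph{a priori} the only possible singularities of $\varphi^1,\psi^1$ lie in $S_P(g)$ and those of $\varphi^2,\psi^2$ in $S_N(g)$.

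For item~(\textit{i}) I would fix $b\in S_P(g)$ and set $k\defi \ord_g(b)$. Since $g(b)=\infty\neq 0$ we have $b\in M_2$, so $\varphi^2$ is holomorphic in a neighbourhood of $b$. The identity $\varphi^1 = g\cdot \varphi^2$, valid on $M_1\cap M_2$, then extends $\varphi^1$ meromorphically across $b$, and yields $\ord_{\varphi^1}(b)\leq k = \ord_g(b)$ (with strict inequality precisely when $\varphi^2$ vanishes at $b$). Item~(\textit{ii}) is handled symmetrically: for $a\in S_N(g)$ we have $a\in M_1$, so $\varphi^1$ is holomorphic near $a$, and the relation $\varphi^2 = \varphi^1/g$ furnishes the extension together with the claimed bound $\ord_{\varphi^2}(a)\leq \ord_g(a)$. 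The proofs of (\textit{iii}) and (\textit{iv}) are verbatim the same with $\varphi^i$ replaced by $\psi^i$.

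Since the whole argument is a local calculation on the overlap of the two natural coordinate patches of the bundle $H\oplus H\to\Szw$, I do not anticipate any genuine obstacle; the only point that needs explicit verification is that the extended functions are globally well defined on $M$, but this is immediate because $M_1\cap M_2$ is open and dense in $M$ and a meromorphic function on a Riemann surface is determined by its values on any open dense subset.
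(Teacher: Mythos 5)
Your argument is correct and is essentially the same as the paper's: both proofs extend $\varphi^1$ across a pole $b$ of $g$ via the overlap identity $\varphi^1 = g\varphi^2$ with $\varphi^2$ holomorphic near $b\in M_2$, obtaining $\ord_{\varphi^1}(b)\le\ord_g(b)$, and then handle the remaining three cases by the symmetric arguments. Your write-up is in fact a little cleaner than the paper's, which reaches the same conclusion via a case distinction on whether $\lim_{z\to b}\varphi^1(z)$ is infinite or the singularity is removable.
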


\begin{proof}
 On $\C\setminus\left(S_P(g)\cup S_N(g)\right)$ we have $\varphi^2 g = \varphi^1$, and $\varphi^1:\C\setminus S_P(g) \to\C$, $\varphi^2:\C\setminus S_N(g)$ are holomorphic. Thus, either  $\lim_{z\to b}\varphi^1(z) =\infty$ for $b\in S_P(g)$ or $\varphi^1(b)$ is a zero of order bigger or equal to $-\ord_g(b)$. 
 In the latter case, $\varphi^1$ has a removable singularity in $b$ and can be extended smoothly. In the first case, $\varphi^1$ has a pole in $b$. 
 There are no other poles of $\varphi^1$, and the order of a pole of $\varphi^1$ cannot be bigger than that of $g$. Therefore, (\ref{h}) holds. The other three claims follow in the same way.
\end{proof}

\begin{proposition}\label{prop:involution}
The twistor space $P=\R^4\times \QR{SO(4)}{U(2)}$ of $\R^4$  naturally carries an antiholomorphic involution $J$ defined as being the identity on $\R^4$ and the multiplication by $-1$ on $\QR{SO(4)}{U(2)}$. The composition of this involution with a lift of an immersed surface into the twistor space gives the lift of the same surface with reversed orientation.
Furthermore, the involution is fiber preserving and induces the antiholomorphic involution $z \mapsto -\frac{1}{\overline{z}}$ on $\C P^1$.
\end{proposition}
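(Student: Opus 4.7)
The plan is to verify each assertion in turn: first that the involution is well defined on $P$; then its effect on lifts; then the identification with the antipodal map on $\C P^1$; and finally antiholomorphicity with respect to $Y$.

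To see $J$ is well defined, I would check that if $J_x \in P_x$ satisfies the four conditions of Definition~\ref{twistorspace} then so does $-J_x$. Items (i) and (ii) are immediate; item (iii) holds since $-\mathrm{Id}$ on $\R^4$ has determinant $+1$; and $\Omega_{-J_x} = -\Omega_{J_x}$ implies $-\Omega_{-J_x} \wedge \Omega_{-J_x} = -\Omega_{J_x} \wedge \Omega_{J_x}$, so (iv) is preserved. Fiber preservation and the involution property $J \circ J = \mathrm{id}_P$ follow at once from the formula $(x,J_x) \mapsto (x,-J_x)$.

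For the action on lifts I would apply the defining equations \eqref{eq:twisterconditions} directly. Reversing the orientation of an oriented surface $M$ amounts to replacing the positively oriented tangent frame $(E_1,E_2)$ by $(E_2,E_1)$; to maintain the chosen ambient orientation of $\R^4$ we simultaneously swap $(N_1,N_2) \to (N_2,N_1)$. Plugging these new frames into \eqref{eq:twisterconditions} yields that the new lift $\tilde F$ satisfies $\tilde F(E_i) = -F(E_i)$ and $\tilde F(N_j) = -F(N_j)$ for $i,j=1,2$, i.e., $\tilde F = -F = J\circ F$.

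For the induced map on the fibers I would use that under $\QR{SO(4)}{U(2)} \cong \C P^1$ a compatible complex structure $J_x$ corresponds to the line of its $+i$-eigenvectors in $T_x\R^4 \otimes \C$. The involution $J_x \mapsto -J_x$ interchanges the $+i$- and $-i$-eigenspaces, so on $\C P^1$ it becomes the map that sends a line to its complex conjugate, which is the antipodal map on $\C P^1 \cong \Szw$. A short stereographic projection calculation, starting from $w = (x+iy)/(1-z)$ on $\Szw$, identifies the antipodal map with $z \mapsto -1/\bar z$.

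Antiholomorphicity with respect to $Y$ then follows by decomposing $TP$ into vertical and horizontal parts. The vertical restriction of $dJ$ is antiholomorphic by the previous paragraph. On the horizontal part $T^H_{J_x}P$ the complex structure is $Y_{J_x} = d\pi^{-1} J_x \, d\pi$; since $\pi\circ J = \pi$ the differential of $J$ is the identity on horizontal vectors, while $Y_{-J_x} = -Y_{J_x}$, whence $dJ \circ Y = -Y \circ dJ$ horizontally. The only delicate point, which I would treat with a carefully fixed sign convention, is the stereographic-projection computation identifying the antipodal map on $\C P^1 \cong \Szw$ with the explicit formula $z \mapsto -1/\bar z$ so that it agrees with the complex structure used in Theorem~\ref{weierstr}.
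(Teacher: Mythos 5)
Your proposal takes a genuinely different route from the paper. You argue directly in the model $P=\R^4\times \QR{SO(4)}{U(2)}$: you check that $J_x\mapsto -J_x$ preserves the four defining conditions of the twistor fiber, compute the effect on lifts by swapping both the tangential and normal frames, identify the fiber map as the antipodal map, and verify antiholomorphicity via the horizontal/vertical splitting of $TP$. All of these steps are sound. The paper, by contrast, works entirely on the Friedrich side: it writes down an explicit antiholomorphic involution $\tilde I$ on $\C^4$, checks that it preserves $H\oplus H$, that $p\circ\tilde I$ is $z\mapsto -1/\bar z$, that it is the identity on $\R^4$ via the parametrization~\eqref{real4}, and then \emph{deduces} that the transported involution is multiplication by $-1$ on $\QR{SO(4)}{U(2)}$. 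This explicit model is what later yields the concrete transformation law \eqref{phicircI} in Corollary~\ref{thmgcircI}, so the paper's computation carries additional payload that your argument does not produce.

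That said, there is a genuine gap in your proposal which you yourself flag but do not close. You establish that $J$ acts as \emph{the} antipodal involution on the round $\QR{SO(4)}{U(2)}\cong\Szw$, and that the antipodal map on $\Szw$ in stereographic coordinates is $z\mapsto -1/\bar z$. But the proposition, and the crucial identity $g\circ I = -1/\bar g$ in Corollary~\ref{thmgcircI}, use the coordinate $z$ coming from Friedrich's projection $p:H\oplus H\to\C P^1$ (via $z=u_1/u_2$). An abstract antipodal map on a 2-sphere becomes $z\mapsto -1/\bar z$ only in a coordinate that is the standard stereographic one up to a rotation fixing the round structure; an a priori different biholomorphic identification with $\C P^1$ would produce a M\"obius conjugate of $-1/\bar z$, not that exact formula. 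Verifying the compatibility between the intrinsic antipodal map and Friedrich's coordinates is precisely the computation the paper does (the line $p\circ\tilde I(u)=-\bar u_2/\bar u_1$ together with \eqref{identity on R^4}); your proof asserts that this can be done ``with a carefully fixed sign convention'' but does not carry it out. A secondary imprecision: the $+i$-eigenspace of a compatible complex structure on $T_x\R^4\otimes\C$ is two-dimensional, not a line, so the phrase ``sends a line to its complex conjugate'' does not literally apply; the cleaner route to the antipodal conclusion is via the identification with the unit sphere in (anti-)self-dual two-forms, where $\Omega_{-J}=-\Omega_J$ makes $J\mapsto -J$ visibly antipodal.
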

\begin{proof}
As already mentioned the twistor space $P$ of $\R^4$ is isomorphic to $H\oplus H$ in the sense that the following diagram commutes:
  \[
   \begin{xy}
    \xymatrix{
		P \ar[r]^\psi \ar[dr]_{\pi} &  H \oplus H \ar[d]^{\tilde{\pi}}\\
		& \R^4 
    }
   \end{xy}
  \]
 \\
The projection $\tilde{\pi}$ is given by \eqref{weier}, compare Section 4, Example 4 of \cite{Friedrich}. One can understand $\tilde{\pi}$ as follows:
We define the local sections around a point $z \in \C P^1 \setminus \{[(0,1)], [(1,0)]\}$ with representative $(u_1,u_2) \in \C^2$ as
\[ \alpha(z)= \left( \frac{u_1}{u_2}, 1\right), \quad \beta(z) = \left(1, \frac{u_2}{u_1}\right).\]
A holomorphic section in $H \oplus H$ can be parametrized by the real 4-parameter family 
\begin{equation}\label{real4}
\zeta= (A \alpha(z) + B \beta(z), \bar{B} \alpha(z) - \bar{A} \beta(z))\,.
\end{equation}
The projection $\tilde{\pi}(\zeta)$ is then $(A,B) \in \C^2 = \R^4$.\\
The space $H\oplus H$ is holomorphically embedded in $\C^4$ by inclusion. We define the antiholomorphic involution
\[ \tilde{I}: \C^4 \to \C^4 \quad u=(u_1,u_2,u_3,u_4) \mapsto (\bar{u}_4, -\bar{u}_3, -\bar{u}_2, \bar{u}_1)\,.\]
Let $z \in \C P^1 \setminus\{[(0,1)]\}$ and $u \in H \oplus H$ with $p(u)=z$, i.e.~$z=\frac{u_1}{u_2}=\frac{u_3}{u_4}$, then $p\circ\tilde{I}(u)= -\frac{\bar{u}_2}{\bar{u}_1} $, hence $\tilde{I}$ defines an antiholomorphic involution on $\C P^1$  by $z \mapsto -\frac{1}{\bar{z}}$. Using the parametrization \eqref{real4} one readily checks that
\begin{equation}\label{identity on R^4}
\tilde{I}(\zeta)= \left(A \alpha\left(-\frac{1}{\bar{z}}\right) + B \beta\left(-\frac{1}{\bar{z}}\right), \bar{B} \alpha\left(-\frac{1}{\bar{z}}\right) - \bar{A} \beta\left(-\frac{1}{\bar{z}}\right) \right)\,.
\end{equation}
Due to the isomorphism $\psi: H\oplus H \to P$ we obtain an antiholomorphic involution on $P$ by $ J \defi \psi\circ \tilde{I} \circ \psi^{-1}$. Equation \eqref{identity on R^4} implies that $J$ is the identity on $\R^4$. It remains to show that $J$ corresponds to the multiplication by $-1$ on $ \QR{SO(4)}{U(2)}$. This can be seen as follows:
Let $f: M \to \R^4$ be a twistor holomorphic immersion with holomorphic lift $F: M \to P$, compare Definition~\ref{lift}. We denote by $\tilde{F}:=\psi\circ F$ the associated holomorphic map into $H\oplus H$ and let $\sigma: M \to M$ be an antiholomorphic involution on the Riemann surface $M$ reversing the orientation. Using $\tilde{I}$ we obtain a new holomorphic map $\tilde{F}_2: M \to H \oplus H$ by $\tilde{F}_2:=\tilde{I}\circ\tilde{F}\circ \sigma$. Furthermore, we have $\tilde{\pi}\circ \tilde{F}_2(p)=f(\sigma(p))=f(p)$ for all $p \in M$ due to \eqref{identity on R^4}. Hence $F_2:=\psi^{-1}\circ\tilde{F}_2: M \to P$ has to be the lift corresponding to the immersion $f\circ \sigma: M \to \R^4$ and therefore $J$ must be the multiplication by $-1$ on $\QR{SO(4)}{U(2)}$.
\end{proof}

%
%

\begin{corollary}\label{thmgcircI}
 Let $M$ be a two-dimensional oriented manifold and $f:M\to \R^4$ a twistor holomorphic immersion. Assume that $M$ is equipped with an antiholomorphic involution $I:M\to M$ without fixpoints such that $f\circ I = f$. 
 Let $F:M\to P$ be the lift into the twistor space and $(g,s_1,s_2)$ the corresponding triple (cf.~Theorem~\ref{weierstr}). Then we have that
 \begin{align}  \label{gcircI}
  g\circ I = -\frac{1}{\bar g}
 \end{align}
 and 
 \begin{align}\label{phicircI}
  \varphi^1 \circ I = \frac{\bar \psi^1}{\bar g}=\bar\psi^2 \ \ \text{ and }\ \ 
  \psi^1 \circ  I =  - \frac{\bar \varphi^1}{\bar g}= -\bar\varphi^2.
 \end{align}
The immersion $f$ is given by the formula
 \begin{align} \label{formf}
  f = (f_1,f_2)= \left(\frac{\varphi^1-\varphi^1\circ I}{g - g\circ I},\frac{\bar\psi^1-\bar \psi^1\circ I}{\bar g - \bar g\circ I}\right)=\left(\frac{ \varphi^1 - \bar \psi^2}{g + \frac{1}{\bar g}},\frac{\bar\psi^1 + \varphi^2}{\bar g + \frac{1}{g}} \right).
 \end{align}

\end{corollary}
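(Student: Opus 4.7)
The plan is to translate the hypothesis $f\circ I=f$ into the lift-level identity $F\circ I=J\circ F$, where $J$ is the antiholomorphic involution of Proposition~\ref{prop:involution}, and then extract the three claimed formulas by projecting to $\C P^1$ and by working in local trivializations of $H\oplus H$. Since $I$ is antiholomorphic (hence orientation-reversing on the Riemann surface $M$), Proposition~\ref{prop:involution} identifies the twistor lift of the immersion $f\circ I$ with $J\circ F\circ I$; but $f\circ I=f$ as maps $M\to\R^4$, so these two lifts must agree, which forces $F=J\circ F\circ I$, equivalently (using $J^2=\mathrm{id}$) $F\circ I=J\circ F$.

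Projecting via $p\colon P\to\C P^1$ yields (\ref{gcircI}) immediately: the induced map of $J$ on $\C P^1$ is $z\mapsto -1/\bar z$ (as computed in the proof of Proposition~\ref{prop:involution}), so $g\circ I=p\circ F\circ I=(p\circ J)\circ F=-1/\bar g$. To obtain the section identities (\ref{phicircI}), I would work with the embedding $H\oplus H\hookrightarrow\C^4$, writing $\tilde F=\psi\circ F$ in coordinates $(\varphi^1,\varphi^2,\psi^1,\psi^2)$. Applying the explicit formula $\tilde I(u_1,u_2,u_3,u_4)=(\bar u_4,-\bar u_3,-\bar u_2,\bar u_1)$ from the proof of Proposition~\ref{prop:involution} to $\tilde F\circ I=\tilde I\circ\tilde F$ gives four coordinate equations; the first and third read $\varphi^1\circ I=\bar\psi^2$ and $\psi^1\circ I=-\bar\varphi^2$, which combined with the transition identities $\varphi^2=\varphi^1/g$ and $\psi^2=\psi^1/g$ from Theorem~\ref{weierstr} produce (\ref{phicircI}).

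Finally, (\ref{formf}) will follow from direct algebraic substitution into the Weierstrass formula (\ref{weier}). From $g\circ I=-1/\bar g$ one computes $g-g\circ I=(1+|g|^2)/\bar g$, and from $\varphi^1\circ I=\bar\psi^1/\bar g$ one computes $\varphi^1-\varphi^1\circ I=(\varphi^1\bar g-\bar\psi^1)/\bar g$; dividing yields the first component of (\ref{weier}), and the second component is analogous using $\psi^1\circ I=-\bar\varphi^1/\bar g$. The only genuinely nontrivial step in the whole argument is the identification of the lift of $f\circ I$ via Proposition~\ref{prop:involution}; once $F\circ I=J\circ F$ is established, every subsequent identity is forced by bookkeeping in the two natural coordinate systems on $P$.
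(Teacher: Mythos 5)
Your proposal is correct and follows essentially the same route as the paper's proof: both establish $J\circ F\circ I = F$ from the identification in Proposition~\ref{prop:involution} together with $f\circ I = f$, then read off \eqref{gcircI} by projecting to $\C P^1$ and \eqref{phicircI} by applying the explicit $\tilde I$ in $\C^4$ coordinates, with \eqref{formf} following by substitution into \eqref{weier}. The only cosmetic difference is that you rewrite the identity as $F\circ I = J\circ F$ before projecting, whereas the paper uses $J\circ F\circ I = F$ directly; this changes nothing of substance.
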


\begin{proof}
Let $F: M\to P$ be the holomorphic lift of $f$ as in the statement above. Then $g= p\circ F$, where $p: P\to\mathbb C P^1$ is the projection in $H\oplus H\cong P$. 
Consider the holomorphic map $\tilde F\defi J\circ F\circ I:M\to P$, where $J:P\to P$ is the antiholomorphic involution from Proposition~\ref{prop:involution}. By assumption we have $f\circ I= f$ which implies (together with the properties of $J$) that $\tilde F$ is the lift of $f$, i.e.~$\tilde F =F$. As $J$ induces the antipodal map on $\C P^1$ we have that
\begin{align*}
 g\circ I = p\circ F\circ I = p\circ  J\circ F = - \frac{1}{\bar g},
\end{align*}
which is (\ref{gcircI}). For (\ref{phicircI}), we do the same argument but now on $H\oplus H$: Denote by $\psi: H\oplus H\to P$ the isomorphism as in the proof of Proposition~\ref{prop:involution}. The antiholomorphic involution $\tilde I$ on $H\oplus H$ from the same proposition has the property $ \tilde I \circ \tilde F\circ I=\tilde F,$
where $\tilde F\defi \psi\circ F$. By definition of $\tilde I$ we get $\bar\psi^2\circ I = \varphi^1$ and $-\bar\psi^1\circ I=\varphi^2$ which implies (\ref{phicircI}).
Formula \eqref{formf} is a consequence of \eqref{weier} and \eqref{phicircI}.
\end{proof}

\begin{lemma} \label{lemmaexg}
 Let $M=\QR{\C}{\Gamma}$ be a torus that carries an antiholomorphic involution $I: M\to M$ without fixpoints. Then $M$ is biholomorphically equivalent to a torus with a rectangular lattice. Moreover, there is a set of admissible parameters $\Lambda_0\neq\emptyset$ and a family of
 meromorphic functions $g_\lambda:M\to \Szw$, $\lambda \in\Lambda_0$, with $\deg (g_\lambda) = 4$ and $g_\lambda\circ I = -\frac{1}{\bar g_\lambda}$.
\end{lemma}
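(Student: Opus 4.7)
The first assertion follows immediately from Theorem~\ref{thminvolution}: after a biholomorphism we may assume $\Gamma=\Z+ir\Z$ with $r>0$ and, up to a M\"obius transformation of the torus, $I(z)=\bar z+\tfrac{1}{2}$ (the second normal form $I(z)=-\bar z+\tfrac{\tau}{2}$ is completely analogous).

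The plan for constructing $g_\lambda$ uses the Jacobi theta functions $\theta_1,\ldots,\theta_4$ of modulus $\tau=ir$. The classical half-period identities $\theta_1(z+\tfrac12)=\theta_2(z)$, $\theta_2(z+\tfrac12)=-\theta_1(z)$, $\theta_3(z+\tfrac12)=\theta_4(z)$, $\theta_4(z+\tfrac12)=\theta_3(z)$, combined with the reality identity $\theta_j(\bar z)=\overline{\theta_j(z)}$ (valid because $\tau\in i\R^+$), imply
\[
 \theta_1\circ I=\overline{\theta_2},\ \theta_2\circ I=-\overline{\theta_1},\ \theta_3\circ I=\overline{\theta_4},\ \theta_4\circ I=\overline{\theta_3}.
\]
Only the pair $(\theta_1,\theta_2)$ picks up a minus sign, and this is the source of the desired $-1$ in $g_\lambda\circ I=-1/\overline{g_\lambda}$.

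I would then set, for $\lambda=(a_1,a_2,a_3,a_4)$,
\[
 g_\lambda(z)\;\defi\;\frac{\theta_1(z-a_1)\,\theta_1(z-a_2)\,\theta_1(z-a_3)\,\theta_3(z-a_4)}{\theta_2(z-\bar a_1)\,\theta_2(z-\bar a_2)\,\theta_2(z-\bar a_3)\,\theta_4(z-\bar a_4)}\;=\;\frac{N(z)}{D(z)},
\]
and define $\Lambda_0$ as the set of 4-tuples $\lambda$ for which (i) the zeros and poles of $g_\lambda$ are pairwise distinct in $\QR{\C}{\Gamma}$ and (ii) $g_\lambda$ is doubly periodic. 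Substituting the four identities above yields $N\circ I=\overline{D}$ and $D\circ I=(-1)^3\overline{N}=-\overline{N}$, hence $g_\lambda\circ I=\overline{D}/(-\overline{N})=-1/\overline{g_\lambda}$. Counting the single simple zero of each $\theta_j$ in a fundamental domain gives $\deg g_\lambda=4$.

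The main step is to verify that $\Lambda_0$ is non-empty. Using the quasi-periodicity $\theta_j(z+\tau)=\epsilon_j q^{-1}e^{-2\pi iz}\theta_j(z)$ with $(\epsilon_1,\epsilon_2,\epsilon_3,\epsilon_4)=(-1,+1,+1,-1)$ and $q=e^{i\pi\tau}$, a short computation shows that the sign factors cancel between $N$ and $D$ and that the exponential factors cancel precisely when $\sum_{i=1}^4\Im a_i=0$; the periodicity in $z\mapsto z+1$ is automatic because each $\theta_j$ contributes the same sign change in numerator and denominator. Thus (ii) reduces to the single linear constraint $\sum\Im a_i=0$, carving out a $7$-real-dimensional affine subspace of the space of 4-tuples, on which the generic condition (i) defines an open dense subset. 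In particular $\Lambda_0\neq\emptyset$, which provides the required family $g_\lambda$.
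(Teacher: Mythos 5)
Your proof is correct and reaches the same conclusion, but the explicit construction is genuinely different from the paper's. The paper constructs $g$ via the Weierstrass sigma function: it fixes the poles $b_1,\dots,b_4$ subject to $\sum\Im b_k=\tfrac{m}{2}\Im\tau$ with $m$ odd, sets the zeros to be $I(b_k)$, uses the quasi-period homomorphism $\eta$, the Legendre relation and the reality of $\sigma$ on a rectangular lattice to show that $h(z)=g(\bar z+\tfrac12)\overline{g(z)}$ is constant, and then explicitly solves for the multiplicative constant $c$ so that $h\equiv-1$. Your route via Jacobi theta functions achieves the key sign \emph{automatically}: because the half-period shift sends $(\theta_1,\theta_2)\mapsto(\theta_2,-\theta_1)$ but $(\theta_3,\theta_4)\mapsto(\theta_4,\theta_3)$, your mixed numerator $\theta_1^3\theta_3$ and denominator $\theta_2^3\theta_4$ give $N\circ I=\overline D$ and $D\circ I=-\overline N$, so $g_\lambda\circ I=-1/\overline{g_\lambda}$ holds without any normalization constant to tune. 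The $z\mapsto z+\tau$ computation (the sign factors cancel, the exponential factors cancel iff $\sum\Im a_j=0$) is right, and one checks that the zero locus $\{a_1,a_2,a_3,a_4+\tfrac12+\tfrac\tau2\}$ then has imaginary parts summing to $\tfrac12\Im\tau\bmod\Z\Im\tau$, matching the paper's odd-$m$ condition after one translates between your $a_j$'s and the pole/zero data. The dimension count ($7$ real parameters, with condition (i) open and dense) is also consistent with the paper's remark.

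Two small points of presentation. First, the parenthetical ``the second normal form $I(z)=-\bar z+\tfrac\tau2$ is completely analogous'' is not quite what the paper does: it reduces that case to $I(z)=\bar z+\tfrac12$ by a further biholomorphism $z\mapsto z/\tau$ (which keeps the lattice rectangular), rather than repeating the construction with different theta-identities; that reduction is cleaner and you should use it since it is already available from Theorem~\ref{thminvolution} and Proposition~\ref{biholom}. Second, it is worth stating explicitly that condition (i) (pairwise distinct zeros and poles) guarantees $\deg g_\lambda=4$ exactly --- without it the degree could drop by cancellation; you gesture at this but only in the parameter-space discussion.
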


\begin{proof} 
By Proposition~\ref{biholom} we get a biholomorphic map $\varphi:M\to\tilde M$, where $\tilde M$ is generated by a ``canonical basis'' $(1,\tau)$. As $\tilde I\defi \varphi\circ I\circ\varphi^{-1}$ is an antiholomorphic involution without fixpoints on $\tilde M$ we know that $\tilde M$ is generated by a rectangular lattice and $\tilde I$ must be $\tilde I(z)=\bar z + \frac{1}{2}$ or $\tilde I(z)=-\bar z + \frac{\tau}{2}$, see Theorem~\ref{thminvolution}. If $\tilde I(z)=-\bar z + \frac{\tau}{2}$, then we go again to another lattice by a biholomorphic map $\psi: \tilde M \to \QR{\C}{\Gamma_1}$, $\psi(z) =\frac{z}{\tau}$. Then $\Gamma_1$ has generator $(\tau_1,1)$, $\tau_1=\frac{-\tau}{|\tau|^2}$ and 
\begin{align*}
 \psi\circ I\circ \psi^{-1}(z)= \psi (-\bar \tau \bar z + \frac{\tau}{2}) = -\frac{\bar \tau}{\tau} \bar z + \frac{1}{2}   =\bar z + \frac{1}{2}
\end{align*}
because $\tau$ is purely imaginary. Thus, we can assume that $I(z)=\bar z + \frac{1}{2}$ and we have a rectangular lattice generated by $(1,\tau)$.

The second step is the proof of the existence of $g$. As we are looking for an elliptic function of degree $4$, $g$ must have four poles $b_k$, $k=1,...,4$, and four zeros $a_k$, $k=1,...,4$ (counting with multiplicities). For the theory of elliptic functions see for example \cite{KoecherKrieg}. 
Such an elliptic function exists if and only if $\sum_{k=1}^4 b_k - \sum_{k=1}^4 a_k \in\Gamma$, see \cite[Section~1.6]{KoecherKrieg}. Consider the function $$h(z) \defi g(\bar z +\tfrac{1}{2}) \bar g(z).$$ We show that we can choose the poles and zeros of $g$ such that $h\equiv -1$, which is equivalent to (\ref{gcircI}). \\
As $g$ only has poles in $b_k$, we require $a_k = I(b_k)= \bar b_k + \frac{1}{2}$. Then $$\sum_{i=1}^4 b_k - \sum_{k=1}^4 ( \bar b_k + \frac{1}{2})= 2i\sum_{k=1}^4 \Im (b_k) - 2 \in\Gamma$$
is a necessary condition for the existence of such $g$. Thus, there must be an $m\in\Z$ with $\sum_{k=1}^4 \Im (b_k) = \frac{m}{2}\Im(\tau)$. As $I$ is an involution we have that $I(a_k) = b_k$, thus if $g\circ I$ has a pole in a point, then $\bar g$ has a zero of the same order at that point (and vice versa). It follows that $h$ has no poles. 
As $\bar h$ is elliptic without poles it is constant, and $h$ is constant as well. We have to find out if this constant can be $-1$. Define $\omega_0\defi \sum_{i=1}^4 b_k - \sum_{k=1}^4 ( \bar b_k + \frac{1}{2})= m\tau  - 2 \in\Gamma$. Then, up to a complex constant factor $c$, $g$ is of the form
\begin{align*}
 g(z) = e^{-\eta(\omega_0) z}\frac{\prod_{k=1}^4 \sigma (z-\bar b_k - \frac{1}{2})}{\prod_{k=1}^4\sigma (z-b_k)},
\end{align*}
where $\sigma:\C\to\C$ denotes the Weierstrass Sigma Function and $\eta:\Gamma\to\C$ is the group homomorphism that satisfies the Legendre relation, i.e.
\begin{align}\label{legendre}
 \eta(\omega_2)\,\omega_1 - \eta(\omega_1)\,\omega_2 = 2\pi i, \ \text{ if } \  \Im\left(\frac{\omega_1}{\omega_2}\right)>0.
\end{align}
We collect some facts about $\sigma$ and $\eta$, see \cite[Section~1.6]{KoecherKrieg}: The function $\sigma$ is an entire function that has exactly in all lattice points zeros of order one. As it is nonconstant and has no poles it cannot be doubly periodic. 
But it has the property
\begin{align*}
 \sigma(z + \omega) = - e^{\eta(\omega)\left(z + \frac{\omega}{2}\right)} \sigma (z),
\end{align*}
when $\frac{\omega}{2}\not\in\Gamma$. If the lattice is real, then $\bar \sigma (z) =\sigma (\bar z)$. This can be seen in the representation formula
\begin{align*}
 \sigma (z) = z \prod_{0\neq \omega\in\Gamma}\left(1-\frac{z}{\omega}\right) e^{\frac{z}{\omega}+ \frac{1}{2}\left(\frac{z}{\omega}\right)^2}.
\end{align*}
For a rectangular lattice, $\eta$ has the property that 
\begin{align}\label{eta}\begin{split}
 \eta (\omega) \in i\R \ \text{ for } \ \omega \in\Gamma\cap i\R, \\
 \eta (\tilde\omega)\in \R \ \text{ for } \tilde\omega\in \Gamma\cap\R. \end{split}
\end{align}
We use these properties to get
\begin{align}\label{h1}
 h(z) & = \exp\left(-\eta(\omega_0) (\bar z + \frac{1}{2}) - \bar\eta (\omega_0) \bar z\right) \frac{\prod_{k=1}^4 \sigma (\bar z - \bar b_k)}{\prod_{k=1}^4 \bar \sigma ( z -  b_k)} \frac{\prod_{k=1}^4\bar \sigma ( z - \bar b_k - \frac{1}{2})}{\prod_{k=1}^4 \sigma (\bar z -  b_k + \frac{1}{2})}\\
 & = \exp\left(- 2 \Re(\eta(\omega_0)) \bar z - \frac{\eta(\omega_0)}{2}\right)\frac{\prod_{k=1}^4\bar \sigma ( z - \bar b_k - \frac{1}{2})}{\prod_{k=1}^4 \sigma (\bar z -  b_k + \frac{1}{2})}\nonumber\\
& =  \exp\left(- 2 \Re(\eta(\omega_0)) \bar z - \frac{\eta(\omega_0)}{2}\right) \frac{\prod_{k=1}^4\bar \sigma ( z - \bar b_k - \frac{1}{2})}{\prod_{k=1}^4 \sigma (\bar z -  b_k - \frac{1}{2})} (-1)^4 \exp\left( -\eta(1) \sum_{k=1}^4 (\bar z - b_k)\right)\nonumber\\
&= \exp\left(- 2 \Re(\eta(\omega_0)) \bar z - \frac{\eta(\omega_0)}{2} - 4 \eta(1)\bar z + \eta(1)\sum_{k=1}^4 b_k \right).  \nonumber
\end{align}
As $\eta(\omega + \tilde \omega) =\eta(\omega) + \eta(\tilde\omega)$ $\forall \omega,\tilde\omega\in\Gamma$ ($\eta$ is a group homomorphism) and $\eta(0)=0$ we get that 
\begin{align*}
 \eta(\omega_0)= m\eta(\tau) - 2\eta(1) \ \text{ and }\ \eta(\tau)\in i\R \ \text{ and }\ \eta(1) \in\R.
\end{align*}
Thus, (\ref{h1}) yields
\begin{align*}
 h(z) &= \exp\left( + 4\eta(1) \bar z + \eta(1) - \frac{m}{2}\eta(\tau) - 4 \eta(1)\bar z + \eta(1) \sum_{k=1}^4 \Re(b_k) + \eta (1) \frac{m}{2}\tau\right)\\
 &= \exp\left(\eta(1)\left( 1 + \sum_{k=1}^4 \Re(b_k)\right)\right) 
 \exp\left( m \pi i  \right),
\end{align*}
where we used the Legendre relation (\ref{legendre}) and property (\ref{eta}) in the last step. Thus, for every combination of poles $b_k$, $k=1,...,4$ that satisfies $i\sum_{k=1}^4 \Im (b_k) = \frac{2l + 1}{2} \tau$ for an $l\in \Z$ we define $R\defi\sum_{k=1}^4\Re(b_k)$ and choose $c\defi e^{-\frac{\eta(1)}{2}\left( 1 + R\right)}$. 
Then we have with $\tilde g (z) \defi c g(z)$ ($g$ as above) that 
\begin{align*}
 h(z) = |c|^2 e^{\eta(1)\left( 1 + R\right)}\cdot (-1)=-1,
\end{align*}
which is equivalent to (\ref{gcircI}). 
As we can assume that $b_k\in [0,1]\times [0,\Im(\tau)]$ it suffices to consider poles such that $i\sum_{k=1}^4 \Im (b_k) = \frac{2l + 1}{2} \tau$ with $l\in\{0,1,2,3\}$. We collect all such possible $b_k's$ in $\Lambda_0$. The set $\Lambda_0$ is obviously not empty.

\end{proof}

\begin{proposition}\label{prop:immersion}
 Let $M=\QR{\C}{\Gamma}$ be a torus that carries an antiholomorphic involution $I:M\to M$  without fixpoints. Let $g:M\to\Szw$ be meromorphic with $g\circ I=-\frac{1}{\bar g}$ (coming from Lemma~\ref{lemmaexg}). If there is a meromorphic function $\varphi^1: M\to\Szw$ with $2\leq \deg(\varphi^1)\leq 4$, $S_P(\varphi^1)\subset S_P(g)$, $\ord_{\varphi^1}(b)\leq \ord_g(b)$ $\forall b\in S_P(\varphi^1)$ and $\varphi^1\neq c g + \tilde c$ $\forall c,\tilde c\in \C$ then there are unique meromorphic functions $\psi^1,\psi^2,\varphi^2:M\to \Szw$ such that the triple $(g,s_1,s_2)$ ($s_1=(\varphi^1,\varphi^2)$, $s_2=(\psi^1,\psi^2)$) corresponds to a twistor holomorphic immersion $f:M\to\R^4$. 
 In particular, the properties of Theorem~\ref{weierstr} and Corollary~\ref{cor:prop} are satisfied.

\end{proposition}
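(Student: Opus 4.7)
My plan is to exploit the constraints from Theorem~\ref{weierstr} and Corollary~\ref{thmgcircI}, which already force the definitions
\begin{align*}
\psi^2(z) \defi \overline{\varphi^1(I(z))}, \qquad \varphi^2(z) \defi \varphi^1(z)/g(z), \qquad \psi^1(z) \defi g(z)\,\psi^2(z).
\end{align*}
Indeed, Theorem~\ref{weierstr} demands $\varphi^2 = \varphi^1/g$ and $\psi^2 = \psi^1/g$ on $M_1\cap M_2$, while formula \eqref{phicircI} of Corollary~\ref{thmgcircI} imposes $\psi^2 = \overline{\varphi^1\circ I}$; together these relations determine all three functions uniquely from $\varphi^1$ and $g$, giving uniqueness. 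It then remains to verify (a) meromorphicity with the divisor bounds of Corollary~\ref{cor:prop}, and (b) the immersion condition $|ds_1|+|ds_2|>0$ of Theorem~\ref{weierstr}.

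For part (a), I would note that $\psi^2$ is meromorphic because $I$ is antiholomorphic, so $\overline{\varphi^1\circ I}$ is holomorphic in any local chart of $M$. Its poles lie at $I^{-1}(S_P(\varphi^1))$, and since $g\circ I=-1/\bar g$ the involution $I$ exchanges $S_P(g)$ with $S_N(g)$ order-for-order. Combined with the hypothesis $S_P(\varphi^1)\subset S_P(g)$ and $\ord_{\varphi^1}(b)\leq \ord_g(b)$, this yields $S_P(\psi^2)\subset S_N(g)$ with $\ord_{\psi^2}(a)\leq \ord_g(a)$. The analogous statements for $\varphi^2 = \varphi^1/g$ and $\psi^1 = g\psi^2$ are immediate. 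The transition relations $\varphi^2=\varphi^1/g$ and $\psi^2=\psi^1/g$ on $M_1\cap M_2$ then ensure that $s_1=(\varphi^1,\varphi^2)$ and $s_2=(\psi^1,\psi^2)$ patch into holomorphic sections of $g^\ast H$ over all of $M$.

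The main obstacle is part (b). My plan is to substitute the above identifications into formula \eqref{formf}, writing $f$ as an explicit rational expression in $\varphi^1$, $g$, and their $I$-translates, and then to compute $\partial_z f$ and $\partial_{\bar z} f$. The necessity of the hypothesis $\varphi^1\neq cg+\tilde c$ is clear: if $\varphi^1=cg+\tilde c$, a direct substitution into \eqref{weier} together with the forced $\psi^1=-\bar c+\bar{\tilde c}g$ shows that $f\equiv (c,\tilde c)$ is constant, so no immersion can arise. I expect the converse to follow by showing that the vanishing of the real rank of $df$ at any $z_0\in M$ forces, via the Wronskian identities among $(\varphi^1)'$, $g'$, and their $I$-translates, the holomorphic 1-form $d(\varphi^1-cg)$ to vanish on all of $M$ for some $c\in\C$; by connectedness this yields $\varphi^1=cg+\tilde c$, contradicting the hypothesis. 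Once $|ds_1|+|ds_2|>0$ is established, Theorem~\ref{weierstr} produces the twistor holomorphic immersion $f:M\to\R^4$, and $f\circ I=f$ holds by construction through Corollary~\ref{thmgcircI}.
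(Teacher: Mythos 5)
Your part (a) — the forced definitions $\varphi^2=\varphi^1/g$, $\psi^2 = \overline{\varphi^1\circ I}$, $\psi^1=g\psi^2$, the uniqueness, and the verification that these satisfy the divisor bounds of Corollary~\ref{cor:prop} — matches the paper's argument (the paper writes $\psi^1=-\overline{\varphi^2\circ I}$, which you can check is the same function). Your observation that $\varphi^1 = cg+\tilde c$ forces $f$ to be constant is also consistent with the paper's non-constancy argument, although the paper proves the contrapositive (if $f_1$ is constant then $\varphi^1-cg$ is both holomorphic and antiholomorphic, hence constant), which is the logically relevant direction.

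The genuine gap is in part (b), which is the heart of the proposition. You propose to show $|ds_1|+|ds_2|>0$ by a local computation: compute $\partial_z f,\partial_{\bar z}f$ at a putative point of rank collapse and deduce, via Wronskian identities, that $d(\varphi^1-cg)\equiv 0$ globally. This is not a proof, only an expectation, and the strategy is unlikely to work as stated: the vanishing of $ds_1,ds_2$ at one point $z_0$ is a finite-order local condition on the elliptic functions involved, and there is no evident way for such local degeneracy to propagate to a global linear dependence $\varphi^1=cg+\tilde c$. Indeed, branched twistor-holomorphic maps $F:M\to P$ with isolated branch points do exist in general, and nothing local rules them out. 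The paper handles this obstruction by a completely different, \emph{global} argument that crucially exploits the involution: assuming the branch set $B$ is nonempty, it observes $|B|$ is even, that $f$ is a branched $W^{2,2}$-conformal immersion with $\W(f)=16\pi$, inverts at a branch image $x_0=f(p)$ where the density is at least $4$, and applies the Li--Yau/Kuwert--Li formula (3.1) to obtain $\W(\hat f)\le 0$. Thus $\hat f$ would be superminimal, hence (by Eisenhart) locally given by two holomorphic functions; but then $\hat f\circ I=\hat f$ with $I$ antiholomorphic forces $\hat f$ to be constant, a contradiction. The topological input (the fixpoint-free involution, i.e.\ the Klein bottle structure and the even multiplicity it entails) is what makes this work, and that information is invisible to a pointwise Wronskian computation. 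You would need to either adopt the paper's inversion/superminimality argument or supply a genuinely new global mechanism; the sketch you give does not provide one.
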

\begin{proof}
 As the existence of $\varphi^1$ is assumed we define $\varphi^2 = \frac{\varphi^1}{g}$ and $\psi^2 =\bar\varphi^1\circ I$  and $\psi^1 = - \bar \varphi^2 \circ I$. This defines $\psi^1,\psi^2,\varphi^2:M\to \Szw$ uniquely and we have all the properties of Corollary~\ref{cor:prop}. Then we define
 \begin{align*}
   f\defi\left(\frac{\varphi^1 \bar g - \bar\psi^1}{1 + |g|^2} , \frac{\bar\psi^1 g + \varphi^1}{1 + |g|^2}\right)=\left(\frac{\varphi^1-\varphi^1\circ I}{g - g\circ I},\frac{\bar\psi^1-\bar \psi^1\circ I}{\bar g - \bar g\circ I}\right),
 \end{align*}
which is formula (\ref{weier}) and (\ref{formf}). In this way, we also know $f\circ I = f$.
We claim that $f$ is not constant. If $f_1=c$ for a constant $c\in\C$ then $\varphi^1-\varphi^1\circ I = c(g-g\circ I)$. This is equivalent to $\varphi^1-cg = (\varphi^1-cg)\circ I$, which implies that $\varphi^1-cg$ is holomorphic (as map into $\Szw$) and antiholomorphic. Thus, it must be a constant. But this contradicts $\varphi^1\neq c g + \tilde c$ $\forall c,\tilde c\in C$.\\
We do not know yet if $\abs{d\varphi^1} + \abs{d\varphi^2} + \abs{d\psi^1} + \abs{d\psi^2}>0$. This is necessary for $f$ to be an immersion, see Theorem~\ref{weierstr}. Define $$B\defi\{z\in M: \abs{d\varphi^1} (z) + \abs{d\varphi^2}(z) + \abs{d\psi^1}(z) + \abs{d\psi^2}(z)=0 \}.$$ 
We assume $B\neq \emptyset$.
Considering $\varphi^i,\psi^i$ as elliptic functions with finite degree we know that $\abs{B}<\infty$. As $I$ has no fixpoints $|B|$ is an even number.
By Friedrich's construction, $f: M\to\R^4$ is a branched conformal immersion with branch points in $B$. 
The Riemann Hurwitz formula for covering maps with ramification points yields the formula
\begin{align*}
 \W (f) = 4\pi \deg(g)=16 \pi
\end{align*}
as shown by Friedrich, see \cite[Section~4, Example~4]{Friedrich}. We combine this with the Gauss Bonnet formula for conformal branched immersions \cite[Theorem~4]{Eschenburg},
\begin{align*}
 \int_M K = 2\pi \left(\chi(M) + \sum_{p\in B} m(p)\right),
\end{align*}
where $m(p)$ is the branching order in $p$, to get 
\begin{align*}
 \frac{1}{4}\int_M|A|^2= \W(f) -\frac{1}{2}\int_M K<\infty.
\end{align*}
Since $M$ is compact we have that $\operatorname{Vol}(f(M))<\infty$. Thus, $f:M\to\R^4$ is a $W^{2,2}-$conformal branched immersion and we can apply \cite{KuwertLi}. For that, fix any $x_0= f(p)$ for some $p\in B$. Then $\sum_{p\in f^{-1}(x_0)}(m(p)+1)\geq 4$. Define $\hat f\defi S\circ J\circ f$, where $J(x)\defi x_ 0 + \frac{x-x_0}{\abs{x-x_0}^2}$ and $S:\R^4\to\R^4$ is any reflection. 
Then $\hat f: M\setminus B\to\R^4$ is twistor holomorphic because $A^\circ$ does not change by a conformal transformation and by Proposition~\ref{prop:Twistor holomorphic} (\ref{xi}) (note that the reflection makes sure that $S\circ J$ is orientation preserving).
We apply (3.1) of \cite{KuwertLi}
and get
\begin{align} \label{Kuw}
 \W(\hat f) = \W(f) - 4\pi \sum_{p\in f^{-1}(x_0)}(m(p)+1) \leq 0.
\end{align}
Hence, $\hat f:M\setminus B\to\R^4$ is superminimal (i.e.\ twistor holomorphic and minimal). By a classical result of Eisenhart \cite{Eisenhart} $\hat f$ is locally given by two (anti-)holomorphic function $\hat f=(h_1,h_2)$. But this yields a contradiction because $\hat f  \circ I = \hat f$ implies that the components of $\hat f$ are holomorphic and antiholomorphic and hence constant. 
\end{proof}

We now restate and prove our main theorem.

\begin{theorem} \label{Unserthm}
%
On each torus $M_r$ with rectangular lattice generated by $(1,ir)$, $r\in\R^+$, there is a set of admissible parameters $\Lambda\neq \emptyset$ such that there are smooth conformal immersions $\hat f^r_\lambda: M_r\to\R^4$ that are twistor holomorphic and double covers of Klein bottles. The corresponding immersed Klein bottles $f^r_\lambda:K\to\R^4$, $\lambda\in\Lambda$, satisfy $\W(f^r_\lambda)= 8\pi$ and $e(\nu^r_\lambda)= -4$. By reversing the orientation of $\R^4$ we get a family of immersions $\tilde f^r_\lambda$ with $W(\tilde f^r_\lambda)= 8\pi$ and $e(\tilde\nu^r_\lambda)= +4$. \\
Every immersion $f:K\to\R^4$ with $\W(f)= 8\pi$ and $e(\nu)\in\{-4, +4\}$ is an embedding and is either an element of $\{f^r_\lambda:\lambda \in\Lambda\}$ or of $\{\tilde f^r_\lambda:\lambda \in\Lambda\}$. 
 Furthermore, every such immersion is a minimizer of the Willmore energy in its regular homotopy class, thus it is a Willmore surface.
\end{theorem}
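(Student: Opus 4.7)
The plan is to split the proof into three largely independent parts: construction of the family $\{f^r_\lambda\}$, classification of immersions with $\W=8\pi$ and $|e(\nu)|=4$, and verification of the embedding and minimizing statements.

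For existence I would fix $r>0$, put $M_r=\QR{\C}{\Gamma_r}$ equipped with the fixed-point-free antiholomorphic involution $I(z)=\bar z+\tfrac{1}{2}$ from Theorem~\ref{thminvolution}, and invoke Lemma~\ref{lemmaexg}, which already yields a nonempty family of meromorphic $g_\lambda:M_r\to\Szw$ of degree $4$ satisfying $g_\lambda\circ I=-1/\bar g_\lambda$, parametrized by pole configurations $\lambda\in\Lambda_0$. To apply Proposition~\ref{prop:immersion} I still need a meromorphic $\varphi^1:M_r\to\Szw$ with $2\le\deg\varphi^1\le 4$, poles confined to $S_P(g_\lambda)$ with orders bounded by those of $g_\lambda$, and $\varphi^1\notin\{cg_\lambda+\tilde c:c,\tilde c\in\C\}$. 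Riemann--Roch on the torus, applied to the degree-$4$ effective divisor $S_P(g_\lambda)$ counted with multiplicity, shows that the space of such meromorphic functions has complex dimension $4$, while the excluded affine subspace has dimension $2$, so a generic $\varphi^1$ works for an open subfamily $\Lambda\subset\Lambda_0$. Proposition~\ref{prop:immersion} then produces a twistor holomorphic immersion $\hat f^r_\lambda:M_r\to\R^4$ with $\hat f^r_\lambda\circ I=\hat f^r_\lambda$, descending to $f^r_\lambda:K\to\R^4$. From Theorem~\ref{weierstr} I get $\W(\hat f^r_\lambda)=4\pi\deg(g_\lambda)=16\pi$, hence $\W(f^r_\lambda)=8\pi$, and Corollary~\ref{cor:eulernumber} on $M_r$ gives $e(\tilde\nu)=-8$, hence $e(\nu^r_\lambda)=-4$ on $K$. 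Reflecting in $\R^4$ produces the companion family with $e(\nu)=+4$.

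For the classification, start with $f:K\to\R^4$ satisfying $\W(f)=8\pi$ and $|e(\nu)|=4$; after reversing the orientation of $\R^4$ if necessary I may assume $e(\nu)=-4$. The oriented double cover $\hat f:M\to\R^4$ then has $\chi(M)=0$, $\W(\hat f)=16\pi$, and $e(\tilde\nu)=-8$, so equality holds in Wintgen's inequality \eqref{Wintgenineq} with $e(\tilde\nu)\le 0$, and Corollary~\ref{cor:eulernumber} tells me $\hat f$ is twistor holomorphic. The deck transformation $I$ is a fixed-point-free antiholomorphic involution of the torus $M$, so by Proposition~\ref{biholom} and Theorem~\ref{thminvolution} I may assume, after a biholomorphism and an ambient Möbius transformation, that $M=M_r$ and $I(z)=\bar z+\tfrac{1}{2}$. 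Theorem~\ref{weierstr} supplies a triple $(g,s_1,s_2)$ with $\deg g=4$, Corollary~\ref{thmgcircI} gives $g\circ I=-1/\bar g$ together with the symmetry relations between the $\varphi^i,\psi^i$, and Corollary~\ref{cor:prop} encodes the pole--zero constraints; hence $g$ is one of the $g_\lambda$ from Lemma~\ref{lemmaexg} and $\varphi^1$ satisfies the hypotheses of Proposition~\ref{prop:immersion}. Consequently $f\in\{f^r_\lambda:\lambda\in\Lambda\}$, or $f$ lies in the reflected family if we started from $e(\nu)=+4$.

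For the embedding claim I would argue by contradiction, reusing the machinery from the proof of Proposition~\ref{prop:immersion}: if $f$ admitted two preimages of some $x_0\in\R^4$ then Li--Yau forces $\W(f)\ge 8\pi$ with equality, and the Kuwert--Li inversion argument of \eqref{Kuw} turns $\hat f=S\circ J\circ f$ into a complete superminimal immersion for a suitable reflection $S$ and the inversion $J$ at $x_0$; by Eisenhart's theorem such a map is locally given by two (anti)holomorphic functions, but $\hat f\circ I=\hat f$ forces these components to be simultaneously holomorphic and antiholomorphic, hence constant, contradicting nondegeneracy. For the minimizing property, Corollary~\ref{cor:Wgeq8pi} gives $\W\ge 8\pi$ on each regular homotopy class with $e(\nu)\ne 0$ containing an embedding, and $f^r_\lambda$ attains this bound; therefore it minimizes in its class and, being a critical point of the Willmore functional, is a Willmore surface. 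The main obstacle I anticipate is the Riemann--Roch count in the existence step: one must verify carefully that $\Lambda\neq\emptyset$ after removing both the constants and the two-dimensional affine subspace $\{cg_\lambda+\tilde c\}$, and also handle the degenerate configurations where $g_\lambda$ has coincident poles; the embedding and minimizing arguments are comparatively mechanical once that foundation is in place.
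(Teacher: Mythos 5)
Your proposal is correct and mirrors the paper's proof in every structural respect: same reduction via Theorem~\ref{thminvolution} and Lemma~\ref{lemmaexg} to the rectangular lattice with $I(z)=\bar z+\frac{1}{2}$, same invocation of Proposition~\ref{prop:immersion} to produce the twistor holomorphic immersion, same bookkeeping ($\W=16\pi$, $e(\tilde\nu)=-8$ on the double cover, halving to $8\pi$ and $-4$ on $K$), same classification argument from the uniqueness of the Friedrich triple combined with Corollaries~\ref{cor:prop} and~\ref{thmgcircI}, same superminimal/Eisenhart contradiction for the embedding claim, and same appeal to Corollary~\ref{cor:Wgeq8pi} for minimality.

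The one point where you genuinely diverge is the existence of an admissible $\varphi^1$. You argue abstractly: Riemann--Roch on the genus-one curve gives $\dim L(D)=\deg D=4$ for $D=S_P(g_\lambda)$, while the forbidden affine set $\{cg_\lambda+\tilde c\}$ is only two-dimensional, so a generic $\varphi^1$ qualifies. The paper instead constructs a degree-two elliptic function explicitly, picking two of the poles $b_1,b_2$ of $g_\lambda$ and zeros $p_1, p_2=b_1+b_2-p_1$, which automatically forces $\varphi^1\notin\{cg_\lambda+\tilde c\}$ because $\deg\varphi^1=2\neq 4=\deg g_\lambda$; the paper then flags via a footnote the degenerate case of a quadruple pole, where that particular construction needs adjusting. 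Your Riemann--Roch count sidesteps that degeneracy uniformly, which is arguably cleaner, at the cost of being less explicit about what $\varphi^1$ looks like. One small slip: the reduction of the involution to $I(z)=\bar z+\frac{1}{2}$ is achieved by a biholomorphism of the torus (a M\"obius transformation of the Riemann surface in the sense of the remark after Theorem~\ref{thminvolution}), not by an ambient M\"obius transformation of $\R^4$ as your phrasing suggests; this does not affect the argument.
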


\begin{proof}
Every Klein bottle $N$ is the quotient of its oriented double cover $q: M\to N$ and the group $\{id, I\} = \langle I\rangle$, where $I: M\to M$ is the antiholomorphic order two deck transformation on a torus $M$, i.e.\ $N=\QR{M}{\langle I \rangle}$ and $q: M\to\QR{M}{\langle I \rangle}$. \\
We consider the rectangular lattice generated by $(1,\tau)$ with  $\Im(\tau)>0$ and the involution $I(z)=\bar z + \frac{1}{2}$. 
As the imaginary part of $\tau$ is not fixed we get the parameter $r\defi \Im(\tau)$ and a family of tori $\{M_r\}_{r\in\R^+}$ with the involution $I$. 
From now on we keep $r$ fixed and denote $N\defi \QR{M_r}{\langle I \rangle}$.
We choose $b_k \in [0,1]\times [0,\Im(\tau)]=: F$, $k=1,...,4$, with $i\sum_{k=1,...,4}\Im(b_k) = \frac{m}{2}\tau$ with $m$ odd. As in the proof of Lemma~\ref{lemmaexg}, each of such a combination yields a meromorphic functions $g$ with (\ref{gcircI}). Consider any $p_1\in F \setminus \{b_1,b_2\}$ and define $p_2\defi b_1 + b_2 - p_1$. Then 
\begin{align}\label{vc}
 b_1 + b_2 - p_1 - p_2 = 0\in\Gamma
\end{align}
If $p_2\in\{b_1 + \Gamma ,b_2 + \Gamma\}$ then we go to $(\tilde p_1, \tilde p_2) = (p_1 + \epsilon, p_2 - \epsilon)$ such that (\ref{vc}) is still satisfied and $\{\tilde p_1,\tilde p_2\}\cap \left(\{b_1 + \Gamma\}\cup\{b_2 + \Gamma\}\right)=\emptyset$. By the existence theorem for elliptic functions there exists a meromorphic $\varphi^1: M \to \Szw$ with poles in $b_1,b_2$ and zeros in $\tilde p_1, \tilde p_2$. By construction and with $\deg(\varphi^1)= 2\neq \deg(g)$ we have proven the existence of a $\varphi^1$ that we need for Proposition~\ref{prop:immersion}.\footnote{The existence of $\varphi^1$ seems to be clear, but there are indeed cases, where we have to be careful. 
If $b_1=..=b_4=:b$ are the poles of $g$ then we cannot construct $\varphi$ with double pole in $b$ and a double zero in $I(b)$, because then we have that $i\Im(b)=\frac{m}{8}\tau$ with $m$ odd and $i \Im (b) =\frac{k}{4}\tau$ with $k\in\Z$, a contradiction.} Of course, this $\varphi_1$ is only one choice. In $\Lambda$ we include all possible $g's$ and $\varphi_1's$.
Then, $$ \tilde f\defi\left(\frac{\varphi^1 \bar g - \bar\psi^1}{1 + |g|^2} , \frac{\bar\psi^1 g + \varphi^1}{1 + |g|^2}\right)$$ is a twistor holomorphic immersion with $\W(\tilde f) = 16 \pi$ and 
\begin{align*}
 e(\nu) = \chi(M) - \frac{1}{2\pi}\W(\tilde f) = -8
\end{align*}
due to Corollary~\ref{cor:eulernumber} (\ref{gleichheit}). We define $f$ by $\tilde f = f \circ q$ and get immersions $f: N \to\R^4$ with $\W(f)= 8\pi$ and $e(\nu_f)=-4$ (equality $e(\nu_{\tilde f})=2 e(\nu_f)$ can be seen for example in (\ref{hf})). By reversing the orientation of $\R^4$ and repeating the construction of $\tilde f$ and $f$ we get immersions $\hat f: N\to\R^4$ with $\W(\hat f) = 8\pi$ and $e(\nu_{\hat f})= +4$. Note that in this case $\tilde{ \hat f} : M \to\R^2$ is not twistor holomorphic (Corollary~\ref{cor:eulernumber}).\\
On the other hand, every immersion $f:N\to\R^4$ with $\W(f)=8\pi$ and $e(\nu)\in\{+4,-4\}$ has all the properties shown in Theorem~\ref{weierstr}, Corollary~\ref{cor:prop}, Corollary~\ref{thmgcircI}. The proof of Lemma~\ref{lemmaexg} shows that every $g$ from the triple $(g,s_1,s_2)$ must be one of the $g_\lambda$ that we found for our surfaces. Also $\varphi_1$ must be one of ours. Thus, by the ``Weierstrass representation'' of Friedrich $f$  must be in $\{f_\lambda:\lambda\in\Lambda\}$ or $\{\hat f_\lambda:\lambda\in\Lambda\}$.\\ 
It remains to check that every immersion $f:N\to\R^4$ with $\W(f)=8\pi$ and $e(\nu)\in\{+4,-4\}$ is an embedding.
We repeat an argument from the proof of Proposition~\ref{prop:immersion}. If $e(\nu) =+4$, then we reverse the orientation of $\R^4$ and get an immersion with $e(\nu) = -4$. We go to the oriented double cover and get an immersion $\tilde f: M \to\R^4$ with $\W(\tilde f)=16\pi$ and $e(\nu)= -8$. 
As equality is satisfied in the Wintgen inequality $\tilde f$ is twistor holomorphic (Corollary~\ref{cor:eulernumber}). If $ f$ has a double point, then $\tilde f$ has a quadruple point $x_0\in\R^4$. Inverting at $\partial B_1(x_0) $ and reflecting in $\R^4$ yields 
$\W(\hat f)=0$ as in (\ref{Kuw}), where $\hat f\defi S\circ J\circ\tilde f$ ($J$ is the inversion, $S$ the reflection). As $S\circ J$ is conformal and orientation preserving $\hat f$ is still twistor holomorphic (Proposition~\ref{prop:Twistor holomorphic} (\ref{xi})). But every superminimal immersion into $\R^4$ is locally given by two (anti-)holomorphic functions. As $\hat f \circ I = \hat f$ for $I$ antiholomorphic, $\hat f_1$ and $\hat f_2$ must be constant. Thus, it cannot be an immersion, a contradiction.\\
Corollary~\ref{cor:Wgeq8pi} shows that every immersion with the properties as above is a minimizer in its regular homotopy class. As Willmore surfaces are defined as critical points of the Willmore energy under compactly supported variations the discovered immersions are Willmore surfaces.
\end{proof}

\begin{corollary}
 Let $K$ be a Klein bottle and $f:K\to\R^4$ an immersion with $\W(f)=8\pi$ and $\abs{e(\nu)}=4$. Let $q:M\to K$ be the oriented double cover on the corresponding torus $M$. Then $M$ is biholomorphically equivalent to a torus $M_r$ generated by $(1,ir)$, $r\in\R^+$, via a map $\varphi:M\to M_r$. Moreover, there are $f^r_\lambda,\tilde f^r_\lambda, q_r$ coming from Theorem~\ref{Unserthm} such that $f\circ q=f^r_\lambda\circ q_r\circ \varphi$ for $e(\nu)=- 4$ or $f\circ q=\tilde f^r_\lambda\circ q_r\circ \varphi$ for $e(\nu)=+ 4$. Thus, the surface $f(K)$ is one of the surfaces obtained in Theorem~\ref{Unserthm}.
\end{corollary}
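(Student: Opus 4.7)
The corollary is essentially an intrinsic restatement of the classification already established in Theorem~\ref{Unserthm}, so my plan is to reduce the arbitrary datum $(M, I, f)$ to the standardized model of that theorem and then read off the conclusion. The given Klein bottle immersion $f$ lifts to $f \circ q : M \to \R^4$, a smooth conformal immersion of the oriented double cover torus $M$ satisfying $\W(f\circ q) = 16\pi$ and $e(\nu_{f\circ q}) = 2 e(\nu_f) \in \{-8, +8\}$; moreover $f \circ q \circ I = f \circ q$ for the antiholomorphic, fixed-point free deck involution $I$ associated to $q$.

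The first main step is to normalize the complex structure. By Proposition~\ref{biholom}, $M$ is biholomorphically equivalent to some $\QR{\C}{\Gamma}$ with $\Gamma$ in canonical form. Pushing $I$ forward through this biholomorphism produces an antiholomorphic fixed-point free involution, so Theorem~\ref{thminvolution} applies and forces $\Gamma$ to be rectangular, generated by $(1,\tau)$ with $\tau \in i\R^+$, and the induced involution is, up to a M\"obius transformation on the target torus, either $z \mapsto \bar z + \tfrac12$ or $z \mapsto -\bar z + \tfrac{\tau}{2}$. If the second form occurs, I would compose with $z \mapsto z/\tau$ exactly as in the opening of the proof of Lemma~\ref{lemmaexg} to land on a new rectangular lattice on which the involution takes the standard form $\bar z + \tfrac12$. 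Writing $r \in \R^+$ for the modulus of the resulting lattice yields the desired biholomorphism $\varphi : M \to M_r$ intertwining the original involution with the standard involution on $M_r$.

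The second step is to push the immersion forward and invoke the classification. The map $\tilde f \defi (f \circ q)\circ \varphi^{-1} : M_r \to \R^4$ is invariant under the standard involution on $M_r$ (because $\varphi$ intertwines the two involutions), so it descends to an immersion $\bar f : K_r \defi \QR{M_r}{\langle I\rangle} \to \R^4$ satisfying $\W(\bar f) = 8\pi$ and $|e(\nu_{\bar f})| = 4$. Now the classification portion of Theorem~\ref{Unserthm} applies verbatim: it gives $\bar f \in \{f^r_\lambda : \lambda \in \Lambda\}$ when $e(\nu_{\bar f}) = -4$ and $\bar f \in \{\tilde f^r_\lambda : \lambda \in \Lambda\}$ when $e(\nu_{\bar f}) = +4$. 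Writing $q_r : M_r \to K_r$ for the standard oriented double cover and lifting, the identity $f \circ q = \bar f \circ q_r \circ \varphi$ is exactly the formula claimed.

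The only point requiring care is the bookkeeping in the normalization step: one has to take $\varphi$ to be the composition of (the canonical lattice reduction from Proposition~\ref{biholom}), the M\"obius conjugator supplied by Theorem~\ref{thminvolution}, and the possible rescaling $z \mapsto z/\tau$ from Lemma~\ref{lemmaexg}, and verify that this composite genuinely conjugates $I$ to $z \mapsto \bar z + \tfrac12$ on $M_r$ so that it descends compatibly with $q$ and $q_r$. Once this is in place, no new content beyond Theorem~\ref{Unserthm} is needed.
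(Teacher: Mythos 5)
Your proposal is correct and takes essentially the same route as the paper. The paper's proof also establishes the biholomorphism $\varphi: M\to M_r$ and then reads off the conclusion from the classification part of Theorem~\ref{Unserthm}, though it routes more explicitly through the twistor-holomorphic chain (Corollary~\ref{cor:eulernumber} $\to$ Theorem~\ref{weierstr} $\to$ Corollary~\ref{thmgcircI} $\to$ Lemma~\ref{lemmaexg}) whereas you unpack the Riemann-surface normalization steps (Proposition~\ref{biholom}, Theorem~\ref{thminvolution}, and the rescaling in Lemma~\ref{lemmaexg}'s proof) directly; both amount to the same reduction.
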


\begin{proof}
  By possibly changing the orientation of $\R^4$ we can assume $e(\nu)=-4$. By Corollary~\ref{cor:eulernumber} we know that $f$ is twistor holomorphic. The work of Friedrich \cite{Friedrich} yields the existence of a triple $(g,s_1,s_2)$ as in Theorem~\ref{weierstr}. By Corollary~\ref{thmgcircI} we have $g\circ I =-\frac{1}{\bar g}$, where $I$ comes from the order two deck transformation of the oriented cover. Lemma~\ref{lemmaexg} shows that there is a biholomorphic map $\varphi:M \to M_r$ where $M_r$ is generated by $(1,ir)$ for an $r\in\R^+$. From Theorem~\ref{Unserthm} we get that $f\circ q$ must be $f^r_\lambda\circ q_r\circ \varphi$ for a $\lambda \in\Lambda$.
\end{proof}

\begin{proposition}\label{prop: SO(4) action}
The Lie group $SO(4)$ acts naturally and fiber preserving on the twistor space $P$. It induces a fiber preserving action on $H\oplus H$. The induced action on $\C P^1$ is the action of the 3-dimensional Lie subgroup $G$ of the M\"obius group on $\C P^1$ that commutes with the antipodal map $z \mapsto -\frac{1}{\bar{z}}$:
\[ G:= \left\{ m(z)=\frac{az +b}{\bar{b}z-\bar{a}} \colon a,b \in \C, \abs{a}^2 + \abs{b}^2 =1\right\}.\]
\end{proposition}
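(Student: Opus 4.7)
The plan is to write down the natural linear action of $SO(4)$ directly on $P$, transfer it via the biholomorphism $\psi: P \to H\oplus H$ from the proof of Proposition~\ref{prop:involution}, and identify the induced action on each fiber $\C P^1$ by exploiting the antipodal involution established there.

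First, for $A \in SO(4)$ and $(x, J_0) \in P$ I would define $A\cdot(x,J_0) \defi (Ax, A J_0 A^{-1})$. Since $A$ is an orthogonal orientation preserving linear map of $\R^4$, the conjugated map $AJ_0A^{-1}\colon T_{Ax}\R^4 \to T_{Ax}\R^4$ still satisfies conditions (i)--(iv) of Definition~\ref{twistorspace}, so this defines a fiber preserving action on $P$ covering the standard action on $\R^4$. The action preserves the Levi-Civita horizontal distribution (trivially, since the flat connection on $\R^4$ is $SO(4)$-invariant) and the vertical complex structure on the homogeneous fibers $\QR{SO(4)}{U(2)}\cong\C P^1$, hence it preserves the almost complex structure $Y$ and is holomorphic. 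The isomorphism $\psi$ in the proof of Proposition~\ref{prop:involution} is constructed canonically from this same data, so the $SO(4)$-action transfers to a fiber preserving holomorphic action on $H\oplus H$.

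Second, to identify the induced action on the fiber $\C P^1$, I would note that the antipodal involution $J$ from Proposition~\ref{prop:involution} commutes with every $A \in SO(4)$: indeed $A(-J_0)A^{-1} = -AJ_0A^{-1}$. Since $J$ acts on $\C P^1$ by the M\"obius transformation $z\mapsto -1/\bar z$, the induced $SO(4)$-action on $\C P^1$ must factor through the subgroup of M\"obius transformations commuting with $z\mapsto -1/\bar z$. A short direct computation $m(-1/\bar z) = -1/\overline{m(z)}$ shows that this subgroup is exactly the $G$ of the statement. On the other hand, since $P_0\cong \QR{SO(4)}{U(2)}$ as a homogeneous space, the image of $SO(4)$ in the diffeomorphism group of $\C P^1$ is at least two-dimensional.

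Finally I would conclude that the image equals $G$. Parametrizing $G$ by $(a,b)\in\mathbb{S}^3$ modulo $(a,b)\sim(-a,-b)$ shows $G \cong SO(3)$, a simple Lie group of dimension three; any proper closed subgroup of $G$ has dimension at most one, so the closed image of the compact group $SO(4)$ inside $G$, being at least two-dimensional, must coincide with $G$. The main point requiring care is keeping the various identifications of $\C P^1$ (as $\QR{SO(4)}{U(2)}$ versus through the explicit coordinate picture of $\psi$) consistent, so that the resulting action lands in the parametrized $G$ rather than some conjugate of it. The concrete alternative is to use $Spin(4) = SU(2)_+\times SU(2)_-$, observe that $SU(2)_-$ lies inside the $U(2)$-stabilizer of the reference complex structure and hence acts trivially on the fiber, then recognize the standard M\"obius action of $SU(2)_+$ on $\C P^1$, which after the sign substitution $(a,b)\mapsto(-a,-b)$ preserving $|a|^2+|b|^2=1$ matches $m(z)=\frac{az+b}{\bar b z-\bar a}$.
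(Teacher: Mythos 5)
Your proposal is correct, and for the first part it follows the paper essentially step by step: the conjugation action $O\cdot(x,J_0)=(Ox,OJ_0O^{-1})$, holomorphy of the action, transfer through $\psi$ to $H\oplus H$, and the observation that commutation with the antiholomorphic involution $J$ forces the induced M\"obius action to land inside the subgroup $G$ commuting with $z\mapsto -1/\bar z$. Where you diverge is the final identification. The paper computes the kernel $N$ of the homomorphism $SO(4)\to\operatorname{Aut}(\hat{\C})$ explicitly in the $\operatorname{Sp}(1)\times\operatorname{Sp}(1)$ picture, finds $N=\{(a,1):a\in\operatorname{Sp}(1)\}$ of dimension $3$, and then compares dimensions: the image is a $3$-dimensional connected Lie subgroup of the $3$-dimensional connected $G$, hence equal to it. You instead observe that the image acts transitively on the two-sphere fiber $\QR{SO(4)}{U(2)}$, so it is at least $2$-dimensional, and then invoke the fact that $G\cong\R P^3\cong SO(3)$ has no proper closed subgroup of dimension $\geq 2$. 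Both arguments are sound; yours is a bit slicker because it avoids computing the kernel, at the cost of importing the classification of closed subgroups of $SO(3)$. Your closing ``alternative'' sketch with $\operatorname{Spin}(4)=SU(2)_+\times SU(2)_-$ is in fact the paper's computation (there one should note that the factor acting trivially does so because it is a \emph{normal} subgroup contained in the stabilizer of one point, not merely because it stabilizes one point — but that is a wording issue in a sketch, not a gap).
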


\begin{proof}
\emph{Claim 1:} 
The $SO(4)$ action $\varphi: SO(4) \times P \to P$ defined as
\[ O\cdot (y,j):= (Oy, OjO^t) \]
is natural and fiber preserving. \\
\emph{Proof of Claim 1:} The action preserves by definition the fibers. It is natural in the sense that if  $f: M \to \R^4$ is a given immersion with corresponding lift $F: M \to \R^4$ then for any $O \in SO(4)$ the map $O\cdot F$ is the lift of the immersion $Of$. Fix a point $x \in M$ and an orthonormal frame $\{ E_1, E_2, N_1, N_2 \}$ in a neighbourhood $U$ of $x$ as in Definition~\ref{lift} with related matrix $F(y) \in SO(4)$, $y\in U$. As $F$ satisfies conditions $i)$ to $ iv)$ so does $OF(y)O^t$. Furthermore $\{ OE_1, OE_2, ON_1, ON_2 \}$ is an orthonormal frame of $Of$ and $OF(y)O^t$ satisfies obviously the conditions \eqref{eq:twisterconditions} with respect to the new orthonormal frame.\\
\emph{Claim 2:} The action is holomorphic.\\
\emph{Proof of Claim 2:}
Given a point $(p,j) \in P$ the complex structure $\mathcal{J}_{(p,j)}$ on 
\[T_{(p,j)}P=T^H_{(p,j)}P \oplus T^V_{(p,j)}P = T_p\R^4 \oplus
 T_j\QR{SO(4)}{U(2)}\]
is by definition the multiplication by $j$ on $T_p\R^4$ and by $j$ on $T_j\QR{SO(4)}{U(2)}$. Hence given $X+Y \in T_p\R^4 \oplus
 T_j\QR{SO(4)}{U(2)}$ and $O \in SO(4)$ we have
\begin{align*}
dO\mathcal{J}_{(p,j)}\left(X+Y\right) &= dO \left(jX + jY\right)\\
&= OjO^t \, OX + OjO^t \, OYO^t = \mathcal{J}_{O\cdot (p,j)} dO(X+Y).
\end{align*}
Thus, the action is holomorphic.\\
\emph{Claim 3:} $SO(4)$ acts naturally, holomorphically and fiber preserving on $H \oplus H$. Hence it induces a group homomorphism $h: SO(4) \mapsto \operatorname{Aut}(\hat{\C})$, where $\operatorname{Aut}(\hat{\C})$ is the M\"obius group of the Riemann sphere.\\
\emph{Proof of Claim 3:} The isomorphism $\psi: P=\R^4 \times \QR{SO(4)}{U(2)} \to H\oplus H$ induces a natural, holomorphic action of $SO(4)$ on $H\oplus H$ by composition:
\[ O\cdot (u,v)= \psi\circ O \cdot \circ \psi^{-1}(u,v).\]
Recall that parallel transport (translation in $\R^4$) defines a fibration of $P$ over one of its fibers, compare Remark 2 \cite{Friedrich}. This fibration defines the isomorphism $\psi$. Hence we have a commutative diagram (compare the remark below):
  \[
   \begin{xy}
    \xymatrix{ \QR{SO(4)}{U(2)}  \ar[r]_\phi & \C P^1\\
    P  \ar[u]  \ar[r]_\psi & H \oplus H \ar[u]_p}
   \end{xy}
  \]
The action of $SO(4)$ on the $\QR{SO(4)}{U(2)}$-factor of $P$ is independent of the basepoint in $\R^4$ and therefore the induced action on $H\oplus H$ is fiber preserving. Therefore, the $SO(4)$ action on $P$ induces an action of $SO(4)$ on $\QR{SO(4)}{U(2)}$ and via the isomorphism $\phi: \QR{SO(4)}{U(2)} \to \C P^1$ it induces also an action on $\C P^1$. The action is holomorphic as proven in Claim~2. Therefore, $SO(4)$ acts on $\C P^1$ as biholomorphic maps. The holomorphic automorphism group of the Riemann sphere $\C P^1\cong \hat{\C}$ is the M\"obius group i.e.\ all rational functions of the form
\[  m(z)=\frac{az +b}{cz+d} \text{ with } a,b,c,d \in \C,\  ad-bc \neq 0.\]
\emph{Claim 4:} Let $G$ be the image of the group homomorphism $h: SO(4) \to \operatorname{Aut}(\hat{\C})$ then 
\[G = \left\{ m \in \operatorname{Aut}(\hat{\C}) \colon \frac{-1}{\overline{m(z)}} = m\left(\frac{-1}{\bar{z}}\right)\right\}; \]
i.e. $G$ is a 3-dimensional Lie-subgroup of $\operatorname{Aut}(\hat{\C})$.\\
\emph{Proof of Claim 4:} $h$ is induced by the group homomorphism $\phi$. Therefore, its kernel corresponds to the normal subgroup
\[ N:=\{ O \in SO(4) \colon  O\cdot j = OjO^t=j \quad \forall j \in {\QR{SO(4)}{U(2)}} \}. \]
The group $N$ can be determined explicitly using the isomorphism $\operatorname{Sp}(1)\otimes\operatorname{Sp}(1) \to SO(4)$ defined by \[(a,b)\cdot q =  a q \bar{b} \text{ for } q \in \Quat\cong\R^4 \text{ and } (a,b) \in \operatorname{Sp}(1)\otimes \operatorname{Sp}(1),\]
where $\operatorname{Sp}(1)$ is the group of unit quaternions, compare \cite[Proposition 1.1]{Salamon}. Conditions $i)$ to $ iv)$ in Definition~\ref{twistorspace} determine
\[ \QR{SO(4)}{U(2)} \cong \{ (1, c) \colon \bar{c}=-c \in \operatorname{Sp}(1) \}. \]
Hence $(a,b) \in N$ iff $(a,b)(1,c)(a,b)^t = (1,c)$ for all $c \in \Quat$, $\abs{c}=1, \bar{c}=-c$. This simplifies to
\begin{align*}
(a,b)(1,c)(a,b)^t &= (a,b)(1,c)(\bar{a},\bar{b})= (1, b c\bar{b}) = (1, c)\quad \forall c \in \operatorname{Sp}(1), \bar{c}= -c\\
 \Leftrightarrow \quad cb &= bc \quad \forall c \in \operatorname{Sp}(1), \bar{c}= -c.
\end{align*}
The last line implies that $b$ has to be real and since $\abs{b}=1$ we conclude $b\in\{-1, +1\}$. Furthermore, we have
\[ N=\{ (a,1) \colon a \in \operatorname{Sp}(1) \}  \]
which is a 3-dimensional Lie subgroup of $SO(4)$. The Lie group $\QR{SO(4)}{N}$ is as well 3-dimensional. $G$ is isomorphic to $\QR{SO(4)}{N}$ by the first isomorphism theorem.\\
Recall the antiholomorphic involution $J$ on the twistor space $P$ and the corresponding involution $\tilde{I}$ on $H\oplus H$ introduced in Proposition~\ref{prop:involution}. Applying $J$  corresponds to reversing the orientation of an immersed surface $f: M \to \R^4$. Since reversing the orientation of the manifold $M$ commutes with the $SO(4)$ action on $\R^4$ the natural associated maps on the whole space and the base have to  commute as well i.e.
\begin{align}\label{eq:commuting} O\cdot \tilde{I}(u,v) &= \tilde{I}( O\cdot(u,v))  &&\forall (u,v) \in H\oplus H, O \in SO(4) \ \ \text{ and}\nonumber\\
\frac{-1}{\overline{m(z)}} &= m\left( \frac{-1}{\bar{z}} \right) &&\forall z \in \C, m \in G
\end{align}
by the properties of $J$.
The subgroup $H$ of $\operatorname{Aut}(\hat{\C})$ that commutes with the antipodal map $z \mapsto - \frac{1}{\bar{z}}$ is readily calculated to be 
\[ H=\{m \in \operatorname{Aut}(\hat{\C}) \colon m(z)=\frac{az +b}{\bar{b}z-\bar{a}} \text{ with } a,b \in \C, \abs{a}^2 + \abs{b}^2 =1\}.\]
We observe that $H$ is as well a 3-dimensional connected Lie subgroup. Property~\eqref{eq:commuting} implies that $G \subset H$. Since $h: \QR{SO(4)}{N} \to G$ is a Lie group isomorphism $G$ is open and closed. As observed before $G$ and $H$ are both connected and of dimension 3. Hence we finally conclude that $H=G$.
\end{proof}
\begin{remark}
The translation invariance of the isomorphism $\psi: P \to H\oplus H$ can also be seen  in the formulas of Friedrich as follows: Let $z \in \C P^1 \mapsto (u,v) \in H \oplus H$ be holomorphic sections with $u(z_0)=A\alpha(z_0) + B\beta(z_0)$, $ v(z_0)= \bar{B} \alpha(z_0) - \bar{A} \beta(z_0)$. Consider the holomorphic sections $\tilde{u}(z)= u(z)- \left(A\alpha(z) + B\beta(z)\right), \tilde{v}(z)= v(z) - \left(\bar{B} \alpha(z) - \bar{A} \beta(z)\right)$. These sections still satisfy $p(u)=z=p(\tilde{u})$ and $p(v)= z = p(\tilde{v})$, but $\tilde{\pi}(u,v)= \tilde{\pi}(\tilde{u}, \tilde{v}) - (A,B)$.\\
The isomorphism $\varphi: \C P^1 \to \QR{SO(4)}{U(2)}$ can  explicitly be stated identifying $\C^2$ with the quaternions $\Quat$. Fix $g \in \C\cup \{\infty\}$ and let $\gamma \in \Quat$ be the  unit-quaternion $\gamma=\left(\frac{-1}{\sqrt{1 + \abs{g}^2}}, \frac{g}{\sqrt{1+ \abs{g}^2}}\right)$. The map $\varphi$ can now be stated using the quaternionic multiplication to be
\[ g \in \C P^1 \mapsto A_g \in \QR{SO(4)}{U(2)} \quad A_gq = q  ( - \bar{\gamma}  i  \gamma) \quad \forall q \in \Quat \cong \R^4,\]
which is equivalent to
$ g\mapsto \frac{1}{\abs{g}^2 + 1}
\begin{pmatrix}
0 & -\abs{g}^2 + 1 & 2g_2 & -2g_1 \\
\abs{g}^2 - 1 & 0 &  2g_1 & 2 g_2 \\
-2 g_2 & -2g_1 & 0 & \abs{g}^2-1 \\
2 g_1 & -2 g_2 & -\abs{g}^2 + 1 & 0
\end{pmatrix}.
$  
\end{remark}

\begin{corollary}\label{cor:uniqueness of the conformal class}
Consider $f^{r_i}_i: K\to \R^4$, $i=1,2$, a pair of Klein bottles with $\W(f_i^{r_i})=8\pi$ and $\abs{e(\nu_i^{r_i})}=4$. Let  $\Phi: \R^4 \to \R^4$ be a conformal diffeomorphism such that $f_1^{r_1}(K)=\Phi\circ f_2^{r_2}(K)$ then $r_1=r_2$.
\end{corollary}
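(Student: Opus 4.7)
The plan is to translate the conformal equivalence of the two surfaces in $\R^4$ into a conformal equivalence of the underlying abstract Klein bottle structures, lift to the oriented double covers, and use the rigidity of the lattice $\Gamma_{r}$ to conclude $r_1=r_2$.

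First, since both $f_1^{r_1}$ and $f_2^{r_2}$ are embeddings with $f_1^{r_1}(K)=\Phi\circ f_2^{r_2}(K)$, there exists a unique diffeomorphism $\sigma\colon K\to K$ such that $\Phi\circ f_2^{r_2}=f_1^{r_1}\circ\sigma$. Because $\Phi$ is conformal on $\R^4$, the map $\sigma$ is a conformal diffeomorphism between the two conformal structures on $K$ induced by $f_1^{r_1}$ and $f_2^{r_2}$. By Theorem~\ref{Unserthm}, these conformal structures have oriented double covers $M_{r_1}$ and $M_{r_2}$ with rectangular lattices $\Gamma_{r_i}=\langle 1,ir_i\rangle$ and canonical deck involution $I(z)=\bar z+\tfrac12$. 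Hence $\sigma$ lifts to a conformal map $\tilde\sigma\colon M_{r_2}\to M_{r_1}$ which is either holomorphic or antiholomorphic, and which intertwines the deck involutions:
\[ \tilde\sigma\circ I_{r_2}=I_{r_1}\circ\tilde\sigma.\]

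Lifting $\tilde\sigma$ further to $\C$, we may write $\tilde\sigma(z)=\alpha z+\beta$ in the holomorphic case, or $\tilde\sigma(z)=\alpha\bar z+\beta$ in the antiholomorphic case. Since $\bar\Gamma_{r_2}=\Gamma_{r_2}$, in both cases we have $\alpha\Gamma_{r_2}=\Gamma_{r_1}$. A direct substitution into the intertwining relation with $I(z)=\bar z+\tfrac12$ forces $\bar\alpha=\alpha$, i.e.\ $\alpha\in\R$. Combined with $\alpha=\alpha\cdot 1\in\Gamma_{r_1}=\Z\oplus ir_1\Z$ this gives $\alpha\in\Z$, and combined with $i\alpha r_2\in\Gamma_{r_1}$ one deduces $\alpha r_2\in r_1\Z$. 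The lattice equality $\alpha\Gamma_{r_2}=\Gamma_{r_1}$ then forces $\alpha=\pm 1$ and consequently $r_1=r_2$.

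The main technical obstacle would be to deal with the two possible normal forms of the involution on a rectangular lattice (Theorem~\ref{thminvolution}) together with the two possible orientations of the lift $\tilde\sigma$. However, by the reduction performed at the beginning of the proof of Lemma~\ref{lemmaexg}, both Klein bottles are placed in the canonical form $(\Gamma_{r_i}, I(z)=\bar z+\tfrac12)$, so the intertwining computation above covers all cases uniformly and reduces the question to the elementary lattice identity that I already carried out.
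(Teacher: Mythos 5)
Your proof is correct, and it takes a genuinely different route from the paper. The paper's argument works on the twistor level: after reducing to $\Phi=\mathrm{id}$ and $e(\nu_i)=-4$, it observes that the holomorphic twistor lifts $F_i:M_{r_i}\to P\cong H\oplus H$ have the same image, forms the biholomorphism $G=F_1^{-1}\circ F_2$ between the tori, and then uses the natural antiholomorphic involution $J$ on $P$ to show that $G$ conjugates $I$ to itself, from which $G(z)=z+b$ with $b\in\R$ and hence $r_1=r_2$. You instead never leave the domain: the embedding property supplies a conformal diffeomorphism $\sigma$ of the abstract Klein bottles, naturality of the oriented double cover gives a conformal (hence holomorphic or antiholomorphic) $\tilde\sigma:M_{r_2}\to M_{r_1}$ intertwining the deck involutions, and the lattice arithmetic on $\Gamma_{r_i}=\Z\oplus i r_i\Z$ (using that the intertwining forces $\alpha\in\R$, hence $\alpha=\pm1$, hence $\Gamma_{r_1}=\alpha\Gamma_{r_2}=\Gamma_{r_2}$) finishes the job. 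Your route is more elementary — it bypasses twistor theory entirely and does not need the sign reduction $e(\nu)=-4$, since it only uses the embedding property and the conformal classification of the double cover — whereas the paper's route has the advantage that it produces the explicit map $G$ between the twistor lifts, which is precisely what is reused in Corollary~\ref{uniqueness up to rigid motions} to relate the meromorphic data $g_1$ and $g_2$. Both proofs are fundamentally rigid-lattice arguments that differ only in how they produce the biholomorphism between $M_{r_1}$ and $M_{r_2}$.
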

\begin{proof}
We use the notation $f_i\defi f^{r_i}_i$ and keep in mind that the double covers possibly live on different lattices. After changing the orientation of $\R^4$ we may assume w.l.o.g that $e(\nu_1) = -4$. The Willmore energy and the Euler normal number are conformally invariant hence $f_2':=\Phi\circ f_2: K \to \R^4$ is a Klein bottle with $\W(f'_2)= 8\pi$ and $\abs{e(\nu'_2)}=4$. The Euler normal number of an immersion only depends on the image and not on the particular chosen immersion. Since $f_1(K)=f'_2(K)$ we deduce $e(\nu_1)=e(\nu_2')=-4$. Hence it is sufficient to prove the statement under the assumption that $\Phi$ is the identity and $e(\nu_i)=-4, i=1,2$.\\
Let $q_i: M_{r_i}=\QR{\C}{\Gamma_i} \to K$ be the oriented double cover of the related tori. By Theorem \eqref{Unserthm} $f_i\circ q_i: M_{r_i} \to \R^4$ are twistor holomorphic with related holomorphic lifts $F_i: M_{r_i} \to P \cong H \oplus H$. By assumption we have $F_1(M_{r_1})=F_2(M_{r_2})$. We also have that $f_i\circ q_i \circ I(z) = f_i\circ q_i(z)$ $\forall z \in \C$, where $I(z)=\bar{z}+ \frac12$.  The maps $F_i$ restricted to one fundamental domain are homeomorphism onto their image. Hence $G=F_1^{-1}\circ F_2: M_{r_2} \to M_{r_1}$ is a homeomorphism between tori. As $F_1, F_2$ are conformal and orientation preserving, $G$ is conformal and orientation preserving and therefore holomorphic. As a biholomorphic map between tori $G$ is of the form $G(z)=az+b$ with inverse $G^{-1}(z)= \frac{z}{a} - \frac{b}{a}$ and $a\Gamma_2=\Gamma_1$. Furthermore, $\tilde{I}=G^{-1}\circ I \circ G$ is an antiholomorphic fixpoint-free involution on $M_{r_2}$. Arguing as in Corollary~\ref{thmgcircI} using the natural involution $J: P \to P$ we deduce
\[ J\circ F_2 \circ \tilde{I}(z) = J \circ F_1 \circ I \circ G(z) = F_1\circ G(z) = F_2(z)= J \circ F_2 \circ I (z).\]
As $J$ is an involution and $F_2$ is (restricted to a fundamental domain) a homeomorphism onto its image we hence conclude $\tilde{I}=I$. 
By direct computation following \[G^{-1}\circ I \circ G(z) = \frac{\bar{a}}{a} \bar{z} + \frac{1}{2a} + \frac{\bar{b}-b}{a} = \bar{z} + \frac12\]
we deduce $a=1$ and $\Im(b)=0$. This implies $r_1=r_2$ and $F_1(z+b)=F_2(z)$ for some $b \in [0,1)$.
\end{proof}

\begin{corollary}\label{uniqueness up to rigid motions}
Let $f_i^{r_i}: K \to \R^4$, $i=1,2$, be a pair of Klein bottles with $\W(f_i^{r_i})=8\pi$ and $e(\nu_i^{r_i})=-4$ and $f_1^{r_1}(K)=\lambda R\circ f_2^{r_2}(K)$ for a rigid motion $R(x)=O(x+v)$, $x\in\R^4$, i.e.\ $O\in SO(4)$, $v\in\R^4$, and a scaling factor  $\lambda \in \R^+$. Then we have that $r_1=r_2$. Furthermore if $g_i: M_{r_i} \to \C P^1$ are the related projections of the holomorphic lifts, there is a M\"obius transform $m_O \in G$ of Proposition~\ref{prop: SO(4) action} and $b \in [0,1)$ such that $g_1(z)=m_O\circ g_2(z+b)$.
\end{corollary}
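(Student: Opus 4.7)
The plan is to combine Corollary~\ref{cor:uniqueness of the conformal class} with the explicit description of the $SO(4)$-action on the twistor space provided by Proposition~\ref{prop: SO(4) action}. Since $\lambda R$ is a conformal diffeomorphism of $\R^4$, the previous corollary immediately gives $r_1=r_2=:r$.

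To obtain the relation between $g_1$ and $g_2$, I would first work out the effect of $\lambda R$ on the holomorphic lift. Writing $\lambda R(x)=\lambda O(x+v)$ and using the natural actions on $P=\R^4\times \QR{SO(4)}{U(2)}$, translation by $v$ sends $(y,j)\mapsto(y+v,j)$, scaling by $\lambda$ sends $(y,j)\mapsto(\lambda y,j)$, and rotation by $O\in SO(4)$ sends $(y,j)\mapsto(Oy,OjO^t)$. The first two act as the identity on the $\QR{SO(4)}{U(2)}\cong \C P^1$-factor, while the rotation descends to the M\"obius transform $m_O\in G$ of Proposition~\ref{prop: SO(4) action}. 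Consequently, if $F_2:M_r\to P$ is the holomorphic lift of the twistor holomorphic immersion $\tilde f_2=f_2\circ q_2$ furnished by Theorem~\ref{Unserthm}, then the holomorphic lift of $\lambda R\circ\tilde f_2$ is $F_2':=\lambda R\cdot F_2$ and satisfies $p\circ F_2'=m_O\circ g_2$, where $p:P\to\C P^1$ is the projection.

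Next, as in the proof of Corollary~\ref{cor:uniqueness of the conformal class}, $F_1(M_r)=F_2'(M_r)$ in $P$ and one obtains a holomorphic biholomorphism $G:=F_1^{-1}\circ F_2':M_r\to M_r$ of the form $G(z)=az+b$. Since $\lambda R\circ\tilde f_2$ is still invariant under $I(z)=\bar z+\tfrac12$, one has $J\circ F_2'\circ I=F_2'$, where $J:P\to P$ is the antiholomorphic involution of Proposition~\ref{prop:involution}; using this identity exactly as in Corollary~\ref{thmgcircI} and Corollary~\ref{cor:uniqueness of the conformal class} yields $\tilde I=I$ for $\tilde I:=G^{-1}\circ I\circ G$. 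The direct computation of $\tilde I$ carried out in the proof of Corollary~\ref{cor:uniqueness of the conformal class} then forces $a=1$ and $b\in[0,1)$. Applying $p$ to the identity $F_1\circ G=F_2'$ and using $p\circ F_2'=m_O\circ g_2$ gives $g_1(z+b)=m_O\circ g_2(z)$, from which the desired formula $g_1(z)=m_O\circ g_2(z+b)$ with $b\in[0,1)$ follows after replacing $b$ by its negative modulo~$1$.

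The main obstacle is the second step, namely correctly identifying how each factor of $\lambda R$ acts on the $\C P^1$-fiber of $P$. This rests on the translation invariance of the isomorphism $\psi:P\to H\oplus H$ noted in the remark after Proposition~\ref{prop: SO(4) action}, on the trivial action of positive scaling on the fiber (scaling is equivariant for every compatible complex structure on tangent spaces of $\R^4$), and on Proposition~\ref{prop: SO(4) action} itself, which explicitly identifies the induced rotation action on $\C P^1$ with the M\"obius transform $m_O\in G$.
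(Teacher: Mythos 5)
Your proposal is correct and follows essentially the same route as the paper's proof: you first invoke Corollary~\ref{cor:uniqueness of the conformal class} to get $r_1=r_2$, then observe that translation and scaling act trivially on the $\C P^1$-factor of the twistor space while rotation by $O$ induces $m_O\in G$ via Proposition~\ref{prop: SO(4) action}, and finally repeat the biholomorphism-and-involution argument from the previous corollary to pin down $G(z)=z+b$. The only cosmetic difference is that the paper explicitly reduces to the case $\lambda=1$, $v=0$ before applying the $SO(4)$-action, whereas you treat the whole transformation $\lambda R$ at once and strip off the fiber-trivial parts in a single step.
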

\begin{proof}
Firstly observe that scaling and rigid motions on $\R^4$ are conformal transformations. Corollary~\ref{cor:uniqueness of the conformal class} hence implies that $r_1=r_2$. We use the notation  $f_i\defi f^{r_i}_i$. \\
Secondly we can assume that $\lambda=1$ and $v=0$ i.e.\ $f_1(K)=O\cdot f_2(K)$ for some $O \in SO(4)$. Otherwise we may 
consider additionally the immersion $f'_2:= \lambda(f_2+v)$ and observe that 
\[ p\circ F_2(z) = p\circ F_2'(z) \text{ for all } z \in M_{r_2}, \] 
where $F_2, F_2': M_{r_2} \to H\oplus H$ are the related lifts and $p$ the projection $p: H \oplus H \to \C P^1$. This is easily seen because scaling and translation in $\R^4$ does not affect the tangent space and so the lift in the twistor space is unaffected.\\
Let $F_i: M_{r_i} \to H\oplus H$ be the lift of $f_i: K \to \R^4$ and $O \in SO(4)$ such that $f_1(K)=O\cdot f_2(K)$. The group $SO(4)$ acts naturally on $P$, compare Proposition~\ref{prop: SO(4) action}, hence $O\cdot F_2$ is the lift of $O\cdot f_2$. Using the proof of Corollary~\ref{cor:uniqueness of the conformal class} we conclude the existence of $b \in [0,1)$ such that 
\[ F_1(z) = O\cdot F_2(z+b) \text{ for all } z\in M_{r_1}=M_{r_2}.\]
Furthermore, Corollary~\ref{prop: SO(4) action} implies the existence of a M\"obius transform $m_O \in G$ such that $p\circ O\cdot F_2 = m_O \circ p \circ F_2$. With $g_i=p\circ F_i$ we get
\[ g_1(z) = m_O\circ g_2(z+b) \text{ for all } z \in M_{r_1}=M_{r_2}.\]
\end{proof}

\begin{remark}
Concerning the question how many surfaces we have found in Theorem~\ref{Unserthm} we can say the following: As shown in the proof of Lemma~\ref{lemmaexg} the parameter set (of $g_\lambda$ and therefore of $f^r_\lambda$) is at least of the size of $[0,1]^7$. In Corollary~\ref{uniqueness up to rigid motions} we studied how rigid motions and scaling in $\R^4$ and admissible reparametrizations of the tori affect our surfaces, in particular the ${g_\lambda}'s$. Counting dimensions we still have a parameter set $[0,1]^3$ for every torus $M_r$. 
 
 \end{remark}
 
 We finish this section with deducing the explicit formula for the double cover of the Veronese embedding. We repeat the statement from the introduction:

\begin{proposition}
 Define $f:  \Szw \to\C^2 = \R^4$ by $f(z)= \left(\bar z \frac{\abs{z}^4 - 1}{\abs{z}^6 + 1}, z^2 \frac{\abs{z}^2 +1}{\abs{z}^6 + 1}\right)$. Then $f\left(\Szw\right)$ is the Veronese surface (up to conformal transformation of $\R^4$).
\end{proposition}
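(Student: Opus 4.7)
The plan is to apply Friedrich's Weierstrass representation (Theorem~\ref{weierstr}) to construct an explicit twistor holomorphic immersion of $\Szw$ into $\R^4$ which factors through $\R P^2$ and has Willmore energy $12\pi$. Then the Li--Yau characterization \cite{LiYau}, which states that every immersed $\R P^2$ in $\R^4$ with Willmore energy $6\pi$ is the Veronese embedding up to conformal transformation of $\R^4$, will finish the proof.

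First, I observe that by the remark following Corollary~\ref{cor:eulernumber}, the oriented double cover of the Veronese surface is twistor holomorphic. Thus by Theorem~\ref{weierstr} it admits a triple $(g,s_1,s_2)$ with $\deg(g) = W/4\pi = 3$. Since the double cover factors through the antipodal involution $I(z) = -1/\bar z$ on $\Szw$, the analogue of Corollary~\ref{thmgcircI} forces $g\circ I = -1/\bar g$, and gives compatibility relations between $\varphi^1,\varphi^2,\psi^1,\psi^2$. The natural ansatz is to set
\[ g(z) = z^3, \qquad \varphi^1(z) = z^2, \qquad \psi^1(z) = z, \]
from which $\varphi^2 = \varphi^1/g = 1/z$ and $\psi^2 = \psi^1/g = 1/z^2$ are forced.

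Next I would verify that this data satisfies all required compatibility conditions: the pole/zero order constraints of Corollary~\ref{cor:prop} (only poles at $0$ and $\infty$, with orders bounded by the order of $g$), the identity $g \circ I = -1/\bar g$ (immediate from $(-1/\bar z)^3 = -1/\overline{z^3}$), and the involution relations $\varphi^1 \circ I = \bar\psi^2$ and $\psi^1 \circ I = -\bar\varphi^2$. All of these are short direct computations. The formula (\ref{imm}) guaranteeing that $F$ descends to an actual immersion is then verified by noting that the sections do not simultaneously degenerate at $0$ or $\infty$, which are the only delicate points.

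Substituting into the Weierstrass formula (\ref{weier}) yields precisely
\[ f(z) = \left(\frac{\varphi^1 \bar g - \bar\psi^1}{1+|g|^2},\ \frac{\bar\psi^1 g + \varphi^1}{1+|g|^2}\right) = \left(\bar z\,\frac{|z|^4-1}{|z|^6+1},\ z^2\,\frac{|z|^2+1}{|z|^6+1}\right), \]
matching the stated formula. By Theorem~\ref{weierstr}, $f$ is a twistor holomorphic immersion with $W(f)=4\pi\deg(g)=12\pi$. Since the compatibility relations force $f\circ I=f$ (which can also be verified directly from the stated formula using $I(z)=-1/\bar z$), $f$ factors as $f=\tilde f\circ q$ for the oriented double cover $q:\Szw\to\R P^2$, giving $W(\tilde f)=6\pi$. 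By Li--Yau, $\tilde f(\R P^2)$ is the Veronese surface up to a conformal transformation of $\R^4$, and hence so is $f(\Szw)$. The only mildly tricky step is confirming the immersion condition (\ref{imm}) at the two special fibers $z=0$ and $z=\infty$; everything else is a routine verification of the algebraic identities required by the Weierstrass representation.
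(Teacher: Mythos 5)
Your proposal is correct and follows essentially the same argument as the paper: both choose the triple $g=z^3$, $\varphi^1=z^2$, $\psi^1=z$ (hence $\varphi^2=1/z$, $\psi^2=1/z^2$), verify the compatibility relations $g\circ I=-1/\bar g$ and \eqref{phicircI} for $I(z)=-1/\bar z$, substitute into Friedrich's formula \eqref{weier} to obtain the stated map with $\W(f)=4\pi\deg(g)=12\pi$, pass to the quotient $\R P^2$ with energy $6\pi$, and invoke Li--Yau. The only difference is cosmetic: you spend a bit more time spelling out the pole-order bookkeeping and the immersion condition \eqref{imm} at $0,\infty$, which the paper disposes of in one line.
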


\begin{proof}
 Consider the triple $(g,s_1,s_2)$ where $g=z^3$, $s_1=(\varphi^1,\varphi^2)= (z^2,\frac{1}{z})$, $s_2=(\psi^1,\psi^2)=(z, \frac{1}{z^2})$. Then $g$ satisfies $g\circ I=-\frac{1}{\bar g}$ for the antiholomorphic involution $z\mapsto -\frac{1}{\bar z}$ without fixpoints on $\Szw$. Furthermore, our choice of $s_1$ and $s_2$ yields \eqref{phicircI}. The immersion $f$ is defined such that $f(z)=\left(\frac{\varphi^1 \bar g - \bar\psi^1}{1 + |g|^2} , \frac{\bar\psi^1 g + \varphi^1}{1 + |g|^2}\right)= \left(\frac{\varphi^1-\varphi^1\circ I}{g - g\circ I},\frac{\bar\psi^1-\bar \psi^1\circ I}{\bar g - \bar g\circ I}\right)$, which is \eqref{weier} and \eqref{formf}. It follows that $f\circ I= f$. As $\abs{ds_1} + \abs{ds_2}>0$ on $\Szw$ we defined a twistor holomorphic immersion with $W(f)=12\pi$, see Theorem~\ref{weierstr} or \cite{Friedrich}. As $S^2$ carries the involution $I$ we consider $q: \Szw\to \QR{\Szw}{\langle I\rangle}$ and $\tilde f\defi f\circ q^{-1}: \QR{\Szw}{\langle I\rangle} \to \R^4$. We get an $\R P^2$ with $W(\tilde f)= 6\pi$. By the work of Li and Yau \cite{LiYau} $\tilde f$ must be a conformal transformation of the Veronese embedding.
\end{proof}

\section{A Klein bottle in $\R^4$ with Willmore energy less than $8\pi$} \label{Section3}

In this final section we would like to consider the case of immersions
$f:K\rightarrow \R^4$ with Euler normal number $e(\nu)=0$. Our goal is to show the existence of the minimizer of immersed Klein bottles in $\R^n$, $n\geq 4$. For this, we need the
following theorem:
\begin{theorem} \label{lesseightpi}
Let $K=\R P^2\#\R P^2$ be a Klein bottle.
Then there exists an embedding $f:K\rightarrow \R^4$ with $e(\nu)=0$ and $\W(f)<8\pi$.
\end{theorem}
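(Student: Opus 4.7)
My plan is to repeat parts of the connect-sum construction of Bauer and Kuwert \cite{BauerKuwert}, specialized to two Veronese surfaces, and verify that the resulting closed surface has all the required properties.

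The building blocks I would take are the Veronese embedding $V_+: \R P^2 \to \R^4$ with Willmore energy $6\pi$ (the Li--Yau equality case of \cite{LiYau}) and its mirror image $V_-$ obtained by reflecting $V_+$ across a hyperplane of $\R^4$. By Theorem~\ref{Whitney} they have Euler normal numbers $+2$ and $-2$ respectively and they represent the two regular homotopy classes of embeddings of $\R P^2$ into $\R^4$. Since $K = \R P^2 \sharp \R P^2$ and Euler normal numbers add under a suitable connect sum of surfaces in $\R^4$, any reasonable smooth connect-sum of $V_+$ with $V_-$ will be an immersion of $K$ with $e(\nu) = 0$, which is the regular homotopy class we want.

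The geometric construction I would carry out is as follows. Place the images of $V_+$ and $V_-$ disjointly in $\R^4$, and fix a point $p \in V_+(\R P^2)$. Apply the inversion $I_p(x) = p + (x - p)/|x-p|^2$ to $V_-$: the image $\tilde V_- \defi I_p \circ V_-$ is still an embedded $\R P^2$, has the same Willmore energy $6\pi$ by conformal invariance, and, by choosing the position and scale of $V_-$ appropriately, can be forced to sit in an arbitrarily small ball around $p$. After a rotation one further aligns the tangent planes of $V_+$ at $p$ and of $\tilde V_-$ at a nearby point. The final step is to remove a small geodesic disk from each surface around these points and to join the two resulting boundary circles by a thin catenoidal neck, giving a closed smooth surface $f_\epsilon: K \to \R^4$ depending on a scale parameter $\epsilon > 0$.

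The three required properties then follow. That $f_\epsilon$ is an embedding is immediate, since $V_+$ and the small rescaled $\tilde V_-$ are disjoint embeddings outside the gluing region and the neck can be chosen embedded. The topology is $K = \R P^2 \sharp \R P^2$ by construction, and the matching of tangent planes at the gluing ensures that the normal bundles splice together with $e(\nu_{f_\epsilon}) = (+2) + (-2) = 0$. The hard part, and the only real obstacle, is the Willmore bound. The Bauer--Kuwert estimate yields
\begin{equation*}
\W(f_\epsilon) \le \W(V_+) + \W(V_-) - 4\pi + o(1) = 8\pi + o(1) \qquad \text{as } \epsilon \to 0,
\end{equation*}
the $-4\pi$ coming from the Li--Yau-type saving at the inverted simple point on $V_+$ and the $o(1)$ being the vanishing Willmore contribution of the thin neck. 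A quantitative inspection of the next-order terms along the lines of \cite{BauerKuwert} then upgrades this to a strict inequality $\W(f_\epsilon) < 8\pi$ for $\epsilon$ small enough, producing the embedded Klein bottle required by the theorem.
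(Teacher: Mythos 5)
Your high-level plan (glue two Veronese surfaces by the Bauer--Kuwert construction, invoke the strict energy bound $< 8\pi$, and conclude by Li--Yau that the result is embedded) is the same as the paper's, but the details you describe contain errors, and you omit the one genuinely delicate point.

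First, your Euler normal number bookkeeping is inconsistent with the construction you describe. Inversion at a sphere in $\R^4$ reverses the ambient orientation, so it flips the sign of $e(\nu)$. If you start from $V_+$ with $e(\nu)=+2$ and its mirror $V_-$ with $e(\nu)=-2$ and invert $V_-$, the inverted piece has $e(\nu)=+2$, and the glued Klein bottle would have $e(\nu)=+4$, not $0$. The paper instead glues two copies of the \emph{same} Veronese $V$; after inverting one, the normal Euler numbers are $-2$ and $+2$, and the connect sum lands at $0$. Relatedly, the inversion in Bauer--Kuwert is at a point \emph{on} $f_1(\Sigma_1)$, producing a noncompact surface with a planar end and $\W(\hat f_1)=\W(f_1)-4\pi$; inverting at a point $p \notin V_-(\R P^2)$, as you propose, gives a compact $\R P^2$ still of energy $6\pi$ and saves nothing, so a naive disk-removal-plus-neck gluing would only yield $12\pi + o(1)$.

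Second, and more importantly, you skip the step the paper actually has to prove. The strict inequality in Theorem~\ref{connectedsum} requires arranging $\langle A^\circ(p_1), B^\circ(p_2)\rangle > 0$ by rotating the first piece by some $R=(S,T)\in\mathbb{O}(2)\times\mathbb{O}(k)$. If $T\in\mathbb{O}(k)\setminus\mathbb{SO}(k)$ that rotation is orientation-reversing and flips $e(\nu)$ of the rotated piece yet again, so the final Klein bottle ends up in the wrong regular homotopy class. The Veronese sits exactly in the exceptional case of Lemma~\ref{lalemma}~b) ($\abs{A^\circ_{11}}^2=\abs{A^\circ_{12}}^2$ and $\langle A^\circ_{11},A^\circ_{12}\rangle=0$), so the sign of $T$ is not a free choice: it is forced by whether the two forms $P=A^\circ(p_1)$ and $Q=B^\circ(p_2)$ induce the same or opposite orientations. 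Because the paper glues two \emph{identical} copies of $V$, one has $P=Q$, the last condition of the exceptional case fails, and $T$ can be taken in $\mathbb{SO}(2)$, preserving $e(\nu)=0$. Had one started with $V$ and its mirror (as you propose), one would be forced into $T\in\mathbb{O}(2)\setminus\mathbb{SO}(2)$ and again obtain $e(\nu)=0$, never $\pm 4$ --- consistent with Theorem~\ref{Wintgen}. This orientation bookkeeping, not a ``quantitative inspection of next-order terms'', is what the proof turns on; the strictness itself is already given by Theorem~\ref{connectedsum} once (\ref{scalarproduct}) is secured.
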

For the proof of the preceding theorem we use a
construction by Bauer and Kuwert:
\begin{theorem} (M.~Bauer, E.~Kuwert \cite{BauerKuwert}). \label{connectedsum}
Let $f_i:\Sigma_i\rightarrow \R^n$, $i=1,2$, be two smoothly immersed, closed
surfaces. If neither $f_1$ nor $f_2$ is a round sphere (i.e.\ totally umbilical),
then there is an immersed surface $f:\Sigma\rightarrow \R^n$ with topological type of
the connected sum $\Sigma_1\#\Sigma_2$, such that \begin{align} \label{strictineq}
\W(f)<\W(f_1)+\W(f_2)-4\pi. \end{align}
\end{theorem}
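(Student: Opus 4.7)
The plan is to follow an inversion-and-glue construction: invert each $f_i$ at a carefully chosen simple non-umbilical point of its image, identify the two resulting non-compact surfaces at infinity through a narrow annular transition, and show that the total Willmore energy of the glued surface undercuts $\W(f_1)+\W(f_2)-4\pi$. The M\"obius invariance of $\abs{\mathring A}^2\, d\mu$ together with Gauss-Bonnet furnishes an exact $4\pi$ ``discount'' per inversion, and the strict inequality is bought from the non-umbilical hypothesis.

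First I would choose on each $\Sigma_i$ a point $p_i$ such that $x_i\defi f_i(p_i)$ is a simple point of $f_i$ (i.e.\ $f_i^{-1}(x_i)=\{p_i\}$) and such that $\mathring A_{f_i}(p_i)\neq 0$. Non-umbilical points exist because $f_i$ is not totally umbilical, and simple points are open and dense in $f_i(\Sigma_i)$, so both conditions can be arranged simultaneously. I would then invert: $\tilde f_i\defi I_{x_i}\circ f_i:\Sigma_i\setminus\{p_i\}\to\R^n$, where $I_{x_i}$ is the inversion of $\R^n$ centered at $x_i$. Each $\tilde f_i$ is a complete immersion with one asymptotically flat end at infinity and square-integrable second fundamental form. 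The key identity
\[ \W(\tilde f_i)=\W(f_i)-4\pi \]
follows from the pointwise decomposition $\tfrac{1}{4}\abs{H}^2 d\mu=\tfrac{1}{2}\abs{\mathring A}^2 d\mu+K d\mu$, the conformal invariance of $\abs{\mathring A}^2 d\mu$ (so its integrals over $f_i$ and $\tilde f_i$ coincide), and Gauss-Bonnet: the closed $f_i$ contributes $\int K d\mu=2\pi\chi(\Sigma_i)$, whereas $\tilde f_i$ with one asymptotically flat end contributes $\int K d\mu=2\pi\chi(\Sigma_i)-4\pi$ (the $-2\pi$ for removing a point plus the $-2\pi$ limit of the geodesic-curvature boundary term at infinity).

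Next I would glue. After Willmore-preserving rigid motions and dilations of $\R^n$, arrange $\tilde f_1$ and $\tilde f_2$ so their ends face each other across a common asymptotic plane $\Pi$; cut off each $\tilde f_i$ at a large scale $R$; and replace the two removed tails by a single transition annulus, constructed graphically over $\Pi$ by a convex-combination interpolation of the two end height functions smoothed by a radial cut-off, possibly with a perturbation tuned to the end asymptotics. The tails beyond $R$ contribute $o_R(1)$ to the Willmore energy because $\abs{A}^2\in L^1$ on the ends, so the only remaining task is to control the transition annulus. This is the main obstacle: a naive planar cap-off between two nearly flat ends is essentially a small round sphere contributing exactly $4\pi$, so the construction must exploit the nontrivial second-fundamental-form data at $p_i$ (guaranteed by the non-umbilical hypothesis) to perturb the neck in a direction that saves a fixed positive amount of Willmore energy. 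Once this is achieved,
\[ \W(f)<\W(\tilde f_1)+\W(\tilde f_2)+4\pi=\W(f_1)+\W(f_2)-4\pi, \]
as required. This is also precisely why the hypothesis excludes round spheres: if $f_i$ were a round sphere, $\tilde f_i$ would be a literal plane with no nontrivial asymptotic data, and the strict inequality would indeed fail, consistently with $\W(S^2\# S^2)=\W(S^2)=4\pi=\W(f_1)+\W(f_2)-4\pi$ for a pair of round spheres.
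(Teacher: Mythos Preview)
The paper does not prove this theorem; it quotes it from Bauer--Kuwert and gives only a sketch of their construction (the details are then invoked again in the proof of Theorem~\ref{lesseightpi}). That said, your outline deviates from Bauer--Kuwert at a structural level, and the deviation hides exactly the step where the strict inequality is won.

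The construction sketched in the paper is \emph{asymmetric}: only $f_1$ is inverted, producing $\hat f_1$ with a planar end and $\W(\hat f_1)=\W(f_1)-4\pi$; a small disk is then deleted from $f_2$ near $p_2$ and a suitably scaled copy of $\hat f_1$ is implanted there. The interpolation takes place on a small annulus about $p_2$, where $f_2$ is approximated by its osculating quadric and the end of the scaled $\hat f_1$ still carries second-order data determined by $A^\circ(p_1)$. The leading correction in the expansion of $\W(f)$ is governed by the pairing $\langle A^\circ(p_1),B^\circ(p_2)\rangle$; once this is made positive by rotating $f_1$ in $\mathbb{O}(2)\times\mathbb{O}(k)$ (Lemma~\ref{lalemma}), the interpolation contributes with the right sign and \eqref{strictineq} follows. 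This is the actual mechanism for the strict inequality.

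Your symmetric scheme---invert both surfaces and glue the two planar ends across a common asymptotic plane---pushes the entire burden onto the transition annulus, for which you need $\W(\text{annulus})<4\pi$. You flag this as ``the main obstacle'' and then say only that the non-umbilical data should allow a perturbation saving a fixed amount of energy; but you give no mechanism, and in your setup there is no visible place where $A^\circ(p_1)$ and $B^\circ(p_2)$ interact---at the scale $R\to\infty$ where you glue, each end is seen only through its leading flat asymptotics. Without isolating that interaction term you cannot get strictly below the $4\pi$ budget, so as written this is a genuine gap rather than a different route to the same conclusion.
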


\begin{remark}
 Notice that the strategy for the gluing construction implemented by Bauer and Kuwert was proposed by R.~Kusner \cite{Kusner3}.
\end{remark}

A rough sketch of the construction of the connected sum in Theorem \ref{connectedsum} is
as follows: The surface $f_1$ is inverted at an appropriate sphere in order to
obtain a surface $\hat{f_1}$ with a planar end and energy
$\W(\hat{f_1})=\W(f_1)-4\pi$. Then a small disk is deleted from $f_2$ and
a suitably scaled copy of $\hat{f_1}$ is implanted.
An interpolation yields the strict inequality (\ref{strictineq}). For the details
we refer to \cite{BauerKuwert}.
\\ \\
For the Veronese embedding $V:\R P^2\rightarrow \R^4$ we have $\W(V)=6\pi$. Hence
we can connect two Veronese surfaces and obtain a new surface $f$ with $\W(f)<8\pi$.
However, by the previous sections we know that there is no Klein bottle in $\R^4$
with Euler normal number $4$ or $-4$ and Willmore energy less than $8\pi$.
In order to obtain a better understanding of this situation we have to take a closer
look on the construction of Bauer and Kuwert: \\ \\
For that let $f_i:\Sigma_i\rightarrow \R^n=\R^{2+k}$, $i=1,2$, be two immersions
that are not totally umbilical (i.e.\ no round spheres). Let $A,B$ denote the
second fundamental forms of $f_1$, $f_2$ respectively. Moreover let $p_i\in \Sigma_i$
be two points, such that $A^\circ(p_1)$, $B^\circ(p_2)$ are both nonzero. After a translation
and a rotation we may assume \begin{align*}
f_i(p_i)=0, \hspace{1cm} \text{im}\;Df_i(p_i)=\R^2\times\{0\} \text{ for } i=1,2. \end{align*}
Then $A^\circ(p_1),B^\circ(p_2):\R^2 \times \R^2\rightarrow \R^k$ are symmetric, tracefree, nonzero
bilinear forms. \\ \\
In \cite{BauerKuwert}, p.\ 574, (4.34), it is shown that Theorem \ref{connectedsum} is true provided \begin{align}
\label{scalarproduct} \langle A^\circ(p_1),B^\circ(p_2)\rangle > 0. \end{align}
In order to achieve inequality (\ref{scalarproduct}) one exploits the freedom to rotate the surface $f_1$ by
an orthogonal transformation $R\simeq (S,T)\in \mathbb{O}(2)\times \mathbb{O}(k)\subset \mathbb{O}(n)$
before performing the connected sum construction. \\ \\
The second fundamental form $A_{S,T}$ of the rotated surface $Rf_1$ at the origin is given by
\begin{align*}
A_{S,T}(\zeta,\zeta)=TA(S^{-1}\zeta,S^{-1}\zeta) \hspace{1cm} \text{for all }\zeta\in \R^2.
\end{align*}
For the tracefree part we obtain
\begin{align*}
A_{S,T}^\circ(\zeta,\zeta)=TA^\circ(S^{-1}\zeta,S^{-1}\zeta) \hspace{1cm} \text{for all }\zeta\in \R^2.
\end{align*}
We need the following linear algebra fact that will be applied to $A^\circ$, $B^\circ$:
\begin{lemma} \label{lalemma}
Let $P,Q:\R^2\times\R^2\rightarrow \R^k$ be bilinear forms that are symmetric, tracefree and both nonzero.
\begin{itemize}
\item[a)] There exist orthogonal transformations $S\in \mathbb{SO}(2)$ and $T\in \mathbb{O}(k)$,
such that the form $P_{S,T}(\zeta,\zeta)=TP(S^{-1}\zeta,S^{-1}\zeta)$ satisfies
$\langle P_{S,T},Q\rangle > 0$.
\item[b)] We can choose $S\in\mathbb{SO}(2)$ and $T\in\mathbb{SO}(k)$ such that
$\langle P_{S,T},Q\rangle > 0$, except for the case that all of the following properties
are satisfied:
\begin{itemize}
\item[$\bullet$] $k=2$,
\item[$\bullet$] $|P(e_1,e_1)|^2=|P(e_1,e_2)|^2$\;\; and \;\;$|Q(e_1,e_1)|^2=|Q(e_1,e_2)|^2$,
\item[$\bullet$] $\langle P(e_1,e_1),P(e_1,e_2)\rangle=0$ \hspace{1.3mm} and \; $\langle Q(e_1,e_1),Q(e_1,e_2)\rangle=0$,
\item[$\bullet$] $\{P(e_1,e_1),P(e_1,e_2)\}$ and $\{Q(e_1,e_1),Q(e_1,e_2)\}$ determine opposite orientations of $\R^2$.
\end{itemize}
In this case, if $S\in \mathbb{SO}(2)$ and $T\in \mathbb{O}(2)$ with
$\langle P_{S,T},Q\rangle >0$, we have $T\in \mathbb{O}(2)\setminus \mathbb{SO}(2)$.
\end{itemize}
In b), the ordered set $\{e_1,e_2\}$ is any positively oriented orthonormal basis of $\R^2$.
If all four properties in b) are satisfied for one such basis, they are also satisfied for any other
positively oriented orthonormal basis of $\R^2$.
\end{lemma}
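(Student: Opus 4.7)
The plan is to reduce the lemma to a linear algebra statement about a single $\mathbb{C}$-linear functional on $\mathbb{R}^{k\times k}$. Encode $P$ and $Q$ by the four vectors $v\defi P(e_1,e_1)$, $w\defi P(e_1,e_2)$, $a\defi Q(e_1,e_1)$, $b\defi Q(e_1,e_2)$ in $\mathbb{R}^k$ (the trace-free condition forces $P(e_2,e_2)=-v$ and $Q(e_2,e_2)=-a$). A direct computation, exploiting that rotating the domain by $S=R_\theta$ shifts the complex coefficient $v-iw\in\mathbb{C}^k$ by the phase $e^{-2i\theta}$, yields
\[ \tfrac{1}{2}\langle P_{S,T},Q\rangle \;=\; \operatorname{Re}\!\bigl(e^{-2i\theta}\,F(T)\bigr), \qquad F(T) := \operatorname{tr}\bigl(T\cdot N\bigr),\]
where $N\defi(v-iw)(a+ib)^t\in\mathbb{C}^{k\times k}$ has complex rank at most one. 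For each fixed $T$ the angle $\theta$ can be chosen to make the right-hand side strictly positive precisely when $F(T)\neq 0$, so both parts reduce to finding an admissible $T$ with $F(T)\neq 0$.

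For part \emph{a)}, $F$ is $\mathbb{C}$-linear in $T\in\mathbb{R}^{k\times k}$, and $\mathbb{O}(k)$ spans $\mathbb{R}^{k\times k}$ as a real vector space (via the singular value decomposition, any matrix is a finite $\mathbb{R}$-linear combination of products $UDV^t$ with $U,V\in\mathbb{O}(k)$ and $D$ a diagonal $\pm 1$ matrix). Hence vanishing of $F$ on $\mathbb{O}(k)$ would force $N=0$, contradicting $v-iw\neq 0$ (from $P\neq 0$) and $a+ib\neq 0$ (from $Q\neq 0$).

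For part \emph{b)}, the same spanning argument applies whenever $\mathbb{SO}(k)$ still spans $\mathbb{R}^{k\times k}$: trivially for $k=1$, and for $k\geq 3$ by combining the rotations $R_{ij}(\pi)$ and $R_{ij}(\pi/2)$. The genuine obstruction is $k=2$, since $\operatorname{span}(\mathbb{SO}(2))=\mathbb{R}I+\mathbb{R}J$ is only two-dimensional. There $F$ vanishes identically on $\mathbb{SO}(2)$ iff both traces $\operatorname{tr}(N)$ and $\operatorname{tr}(JN)$ are zero, and---after identifying $\mathbb{R}^2\cong\mathbb{C}$ on the range---these two real conditions combine into the complex system $v\bar a+w\bar b=0$ and $v\bar b-w\bar a=0$, whose coefficient matrix has determinant $-(\bar a^2+\bar b^2)$. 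A nontrivial $(v,w)$ thus exists exactly when $a=\pm ib$ (equivalently $|a|=|b|$, $a\perp b$); the corresponding kernel enforces $w=\pm iv$ (equivalently $|v|=|w|$, $v\perp w$); and the two sign choices translate into $(v,w)$ and $(a,b)$ carrying opposite orientations, precisely the four bulleted properties.

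To close part \emph{b)}, in the exceptional case any reflection $T\in\mathbb{O}(2)\setminus\mathbb{SO}(2)$ may be written as $T(z)=e^{i\varphi}\bar z$ in the identification $\mathbb{R}^2\cong\mathbb{C}$, and a short calculation using that $T$ anticommutes with the $\pi/2$-rotation of $\mathbb{R}^2$ gives $|F(T)|^2=4|v|^2|b|^2>0$. Hence every such $T$ has $F(T)\neq 0$, and the recipe of the first paragraph provides the desired $\theta$. Basis-invariance of the four bulleted conditions is routine: they are equivalent to the vanishing of the complex scalars $|v|^2-|w|^2-2i\langle v,w\rangle$ and $|a|^2-|b|^2-2i\langle a,b\rangle$ together with a sign condition on $v\wedge w$ relative to $a\wedge b$, and a rotation of $\{e_1,e_2\}$ only multiplies those scalars by a unimodular factor while leaving $v\wedge w$ invariant. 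The main difficulty throughout is the $k=2$ dichotomy: isolating the exceptional set as the zero locus of two explicit trace functionals and then explicitly producing a reflection to rescue the statement.
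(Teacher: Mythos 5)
Your proof is correct and, unlike the paper, which simply defers to Bauer and Kuwert (\cite{BauerKuwert}, Lemma~4.5) rather than re-proving the statement, is self-contained. The central device---encoding the domain rotation $S=R_\theta$ as a phase $e^{-2i\theta}$ acting on the complex vector $v-iw\in\C^k$, and packaging the entire $T$-dependence into the rank-one trace functional $F(T)=\operatorname{tr}(TN)$ with $N=(v-iw)(a+ib)^t$---reduces both parts to the single question of whether $F$ can vanish identically on $\mathbb{O}(k)$ (resp.\ $\mathbb{SO}(k)$). The SVD spanning argument handles part~a), and the dichotomy at $k=2$, where $\operatorname{span}\,\mathbb{SO}(2)=\R I\oplus\R J$ is only two-dimensional, is exactly the source of the exceptional case; your solution of the two trace equations recovers the four bulleted conditions, and the identity $|F(T)|^2=4|v|^2|b|^2$ for reflections $T$ confirms the closing claim of b). Bauer and Kuwert argue instead by putting the trace-free second fundamental forms into a normal form and treating cases directly (the paper's remark ``$|b|=a$, $|d|=c$, $b$, $d$ opposite signs'' refers to the coefficients of that normal form), so your route is genuinely different and arguably more conceptual: it isolates the obstruction as a representation-theoretic fact about $\mathbb{SO}(k)$. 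Two small caveats, neither affecting validity: ``$F$ is $\C$-linear in $T\in\R^{k\times k}$'' should read ``$\R$-linear with values in $\C$'' (or: extends $\C$-linearly to $\C^{k\times k}$); and the assertion that $\mathbb{SO}(k)$ spans $\R^{k\times k}$ for $k\ge 3$ is stated a bit tersely, since $R_{ij}(\pi)$ and $R_{ij}(\pi/2)$ alone yield the diagonal and antisymmetric parts but not $E_{ij}+E_{ji}$. One clean fix: $\operatorname{span}\,\mathbb{SO}(k)$ is a subalgebra of $\R^{k\times k}$ containing $\mathfrak{so}(k)$, and for $k\ge 3$ the associative algebra generated by $\mathfrak{so}(k)$ is all of $\R^{k\times k}$ because $\R^k$ is absolutely irreducible; equivalently, $\R^k\otimes\R^k$ is irreducible under $\mathbb{SO}(k)\times\mathbb{SO}(k)$.
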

\begin{proof}
See \cite{BauerKuwert}, p.\ 574, Lemma 4.5 The exceptional case in b) is the case in \cite{BauerKuwert} in which $k=2$,
$|b|=a$, $|d|=c$ and $b$, $d$ have opposite signs. \end{proof}
\begin{proof}[Proof of Theorem \ref{lesseightpi}]
Let $V:\R P^2\rightarrow \R^4$ be the Veronese embedding. Consider two copies
$f_i:\Sigma_i\rightarrow \R^4$ of $V$, i.e.\ $\Sigma_i=\R P^2$ and $f_i=V$ for $i=1,2$.
Let $A,B$ denote the second fundamental forms of $f_1$, $f_2$ respectively. Of course,
$A=B$ and $A^\circ=B^\circ$. Let $p\in \R P^2$ such that
$A^\circ(p)$ is nonzero (in fact, this is satisfied for any $p\in \R P^2$)
and set $P:=A^\circ(p)$, $Q:=B^\circ(p)$. Then $P,Q$ are two
bilinear forms as in Lemma \ref{lalemma} with $P=Q$. Surely, the last condition in the exceptional
case of Lemma \ref{lalemma} b) fails to be true. Hence, by Lemma \ref{lalemma} we can rotate
$f_1$ by rotations $S,T\in \mathbb{SO}(2)$
such that (\ref{scalarproduct}) is satisfied. Now we are able to perform the connected sum construction:
Inverting $f_1$ and connecting $f_1$ and $f_2$ as described in \cite{BauerKuwert} yields
a surface $f:K\rightarrow \R^4$ with $e(\nu)=0$ and $\W(f)<8\pi$.
As any closed surface with Willmore energy less than $8\pi$ is injective, $f$ is an
embedding.
\end{proof}
Let us finally explain why it is not possible to construct an immersion $f:K\rightarrow\R^4$
with $|e(\nu)|=4$ with the method above:
A direct calculation shows that the Veronese embedding $V$ satisfies
$|A^\circ_{11}|^2=|A^\circ_{12}|^2=1$ and $\langle A^\circ_{11},A^\circ_{12}\rangle=0$ in any
point of $\R P^2$. Let $P,Q$ be defined as in the preceding paragraph.
Then $P$, $Q$ satisfy the second and the third condition of the exceptional case in Lemma 
\ref{lalemma}. In order to obtain
a surface with $|e(\nu)|=4$ we have to reflect one of the Veronese surfaces before
rotating $f_1$ and performing the gluing construction. But then, also the last condition
of the exceptional case in Lemma \ref{lalemma} b) is satisfied. Hence we cannot choose
$T\in \mathbb{SO}(2)$, i.e.\ $f_1$ has to be reflected another time. But then,
after inverting $f_1$ and connecting the surfaces,
 $e(\nu)=0$ for the new surface. Hence, in this very special case, the construction above fails.
\begin{remark} We can also argue the other way round: Theorem \ref{Wintgen} implies that we cannot
choose $S\in\mathbb{SO}(2)$ in Lemma \ref{lalemma}. This implies
$|A^\circ_{11}|^2=|A^\circ_{12}|^2$ and $\langle A^\circ_{11},A^\circ_{12}\rangle=0$ for the 
Veronese embedding. Moreover, the surface $f:K\rightarrow\R^4$ that we obtain from
Theorem \ref{connectedsum} must have Euler normal number $0$ as $\W(f)<8\pi$.
\end{remark}

\begin{remark}
 As we can add arbitrary dimensions to $\R^4$ we get by Theorem~\ref{lesseightpi} that every Klein bottle can be embedded into $\R^n$, $n\geq 4$, with $\W(f)<8\pi$.
\end{remark}

The existence of a smooth embedding $\tilde{f}_0: K \to \R^n$, $n \ge 4$, minimizing the Willmore energy in the class of all immersions $\tilde f: K \to \R^n$ can be deduced by a compactness theorem of E.~Kuwert and Y.~Li, \cite[Proposition~4.1, Theorem~4.1]{KuwertLi} and the regularity results of E.~Kuwert and R.~Sch\"atzle \cite{KuwertSchaetzle} or T.~Rivi{\`e}re \cite{Riviere2, Riviere} if one can rule out diverging in moduli space. We note that T.~Rivi{\`e}re showed independently a compactness theorem similar to the one of Kuwert and Li, see \cite{Riviere}.
The non-de\-gene\-rating property is shown combining the subsequent Theorem~\ref{theo:divergimoduli} and Theorem~\ref{lesseightpi}. We get the following theorem which is Theorem~\ref{thm0}:

 \begin{theorem} 
  Let $S$ be the class of all immersions $f:\Sigma \to\R^n$ where $\Sigma$ is a Klein bottle. Consider
 \begin{align*}
  \beta_2^n\defi \inf \left\{\W(f): f\in S\right\}.
 \end{align*}
Then we have that $\beta_2^n< 8\pi $ for $n\geq 4$. Furthermore, $\beta_2^n$ is attained by a smooth embedded Klein bottle for $n\geq 4$.
\end{theorem}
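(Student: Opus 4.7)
The plan is to split the claim into the upper bound $\beta_2^n<8\pi$ and the existence of a minimizer. The upper bound is essentially given: Theorem~\ref{lesseightpi} produces an embedded Klein bottle $f_0:K\to\R^4$ with $\W(f_0)<8\pi$, and composing with the inclusion $\R^4\hookrightarrow\R^n$ yields an embedded Klein bottle in $\R^n$ with the same Willmore energy for every $n\geq 4$. Hence $\beta_2^n<8\pi$ for all $n\geq 4$, and we may from now on work with a minimizing sequence $f_k:K\to\R^n$ of smooth immersions with $\W(f_k)\to \beta_2^n<8\pi$.

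To extract a limit, I would pass to the oriented double covers $\tilde f_k:M_k\to\R^n$, where $M_k$ is a torus carrying a fixpoint-free antiholomorphic involution $I_k$ with $\tilde f_k\circ I_k=\tilde f_k$ and $\W(\tilde f_k)=2\W(f_k)\to 2\beta_2^n<16\pi$. The first key step is to show that, after a suitable conformal reparametrization (using Proposition~\ref{biholom} to put $M_k$ into standard form with modulus $\tau_k$) and an ambient rescaling and translation in $\R^n$, the sequence $(M_k,\tilde f_k)$ stays in a compact region of moduli space: if $\tau_k$ diverged we would contradict $\W(\tilde f_k)<16\pi$ by Theorem~\ref{theo:divergimoduli}. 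Passing to a subsequence, $M_k$ converges to a limit torus $M$ with an induced fixpoint-free antiholomorphic involution $I$ (the convergence of the $I_k$ is forced by the classification in Theorem~\ref{thminvolution}: modulo M\"obius transformations they are one of two explicit normal forms, and both forms depend continuously on $\tau_k$).

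Next I would apply the Kuwert-Li compactness theorem for $W^{2,2}$-conformal immersions (\cite[Proposition~4.1, Theorem~4.1]{KuwertLi}; alternatively Rivi\`ere's compactness, \cite[Theorem~III.1]{Riviere}). This yields, up to subsequence and normalization, weak $W^{2,2}$ convergence of $\tilde f_k$ to a (possibly branched) $W^{2,2}$-conformal immersion $\tilde f_\infty:M\to\R^n$ with $\W(\tilde f_\infty)\leq 2\beta_2^n$. Since the $I_k$ converge to $I$ and $\tilde f_k\circ I_k=\tilde f_k$, the limit satisfies $\tilde f_\infty\circ I=\tilde f_\infty$, so $\tilde f_\infty$ descends to a branched conformal immersion $f_\infty:K\to\R^n$ of the Klein bottle $K=M/\langle I\rangle$ with $\W(f_\infty)\leq \beta_2^n<8\pi$; combined with $\W(f_\infty)\geq\beta_2^n$ this gives equality, and $f_\infty$ is a Willmore minimizer. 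Finally, the Li-Yau inequality $\W\geq 4\pi\theta$ combined with $\W(f_\infty)<8\pi$ forces $\theta\equiv 1$, so $f_\infty$ is injective, and the point-singularity-removability together with the regularity theorems of Kuwert-Sch\"atzle \cite{KuwertSchaetzle} or Rivi\`ere \cite{Riviere2,Riviere} promote $f_\infty$ to a smooth embedding.

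The principal obstacle is the control of the conformal structure: without Theorem~\ref{theo:divergimoduli} a minimizing sequence could degenerate in moduli space, leaving no Klein bottle on which to take the limit. Once non-degeneracy is secured, the remaining difficulty is verifying that the antiholomorphic involution converges together with the tori and the immersions, so that the limiting object is genuinely a Klein bottle rather than an $I$-invariant map on a different Riemann surface; this is where the explicit normal forms from Theorem~\ref{thminvolution} are decisive. The rest of the argument is by now standard combination of Kuwert-Li compactness with the regularity theory for Willmore surfaces.
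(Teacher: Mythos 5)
Your proposal follows the same route as the paper: the upper bound comes from Theorem~\ref{lesseightpi} with dimension padding, non-degeneration in moduli space is ruled out by combining this bound with Theorem~\ref{theo:divergimoduli} (after passing to the oriented double covers with the factor-of-two in Willmore energy), and the limit is then extracted via Kuwert--Li compactness and upgraded to a smooth embedding via Li--Yau plus the regularity theory of Kuwert--Sch\"atzle or Rivi\`ere. The paper states this chain of implications tersely, and your write-up correctly fills in the intermediate steps — in particular the convergence of the involutions forced by the normal forms of Theorem~\ref{thminvolution}, and the $I$-invariance of the limit — without introducing any new method.
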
 

Before we prove that a sequence of degenerating Klein bottles always has $8\pi$ Willmore energy we explain how we apply certain techniques from \cite{KuwertLi} to non-orientable closed surfaces. \\
We repeat our general set-up from the beginning of Section~\ref{Section1}: Let $N$ be a non-orientable closed manifold of dimension two and $\tilde f: N\to\Rn$ ($n \geq 3$) an immersion. 
Consider $q:M\to N$, the conformal oriented 
two-sheeted cover of $N$, and define $f\defi \tilde f\circ q$. 
As every $2-$dimensional 
oriented manifold can be locally conformally reparametrized $M$ is a Riemann surface that is conformal to $(M, f^\ast \delta_{\text{eucl}})$.
Let $I:M\to M$ 
be the antiholomorphic order two deck transformation for $q$. The map $I$ is an 
antiholomorphic involution without fixpoints such that $ f\circ I = f$.
From now on we will work with the immersion $f$ on the Riemann surface $M$ equipped with an antiholomorphic involution $I$. We are not arguing on the quotient space $N= \QR{M}{\langle I \rangle}$.\\
For the Willmore energy of the immersion $f$ we have: 
\[ \W(f) = 2 \W(\tilde{f}).\]
If $p \in f^{-1}(y)$ then $I(p) \in f^{-1}(y)$ i.e.\ the number of pre-images of $f$ is always even. We describe this in other words: Consider $M$ as a varifold and consider the push-forward of $M$ via $f$ i.e.\ $f_\sharp M$. Then $f_\sharp M$ is a compactly supported  rectifiable varifold with at least multiplicity $2$ at every point.\\
We now consider the case that $f$ is a proper branched conformal immersion, \label{pres} compare \cite[page 323]{KuwertLi} i.e.\ there exists $\Sigma \subset M$ discrete such that $f \in W^{2,2}_{conf, loc}(M\setminus \Sigma, \R^n)$ and 
\[ \int_{U} \abs{A}^2 d\mu_{f^*\delta_{eucl}} < \infty \text{ and } \mu_{f^*\delta_{eucl}}(U)< \infty \text{ for all } U \subset \subset M. \]
We note once again that  we have $I(\Sigma) = \Sigma$ since $f\circ I = f$. If $\varphi: B_\sigma \to M$ is a local conformal parametrization around $\varphi(0) \in \Sigma$ such that $\varphi(B_\sigma) \cap \Sigma = \varphi(0)$ we may apply the classification of isolated singularities result of Kuwert and Li, \cite[Theorem 3.1]{KuwertLi}, to $f\circ \varphi$ and conclude that 
\[ \theta^2( f\circ \varphi_\sharp \a{B_\sigma}, f\circ \varphi(0)) = m +1 \text{ for some } m \ge 0. \]
In here, we considered $\a{B_\sigma}$ as an varifold itself. 
Furthermore $I\circ\varphi : B_\sigma \to M$ is an antiholomorphic parametrization around the point $I\circ\varphi(0)$. Applying once more \cite[Theorem~3.1]{KuwertLi} ($I\circ \varphi(B_\sigma)\cap \varphi(B_\sigma)=\emptyset$ by the choice of $\sigma$) 
\[ \theta^2( f\circ I\circ \varphi_\sharp \a{B_\sigma}, f\circ I\circ \varphi(0)) = m' +1 \text{ for some } m' \ge 0.\]
We have $m=m'$ since $f\circ I = f$. Combining both local estimates with the monotonicity formula of Simon (that extends to branched conformal immersions) we obtain for $q=f\circ\varphi(0) = f\circ I\circ \varphi(0)$. 
\begin{align}\begin{split}
 \W(f) &\ge \theta^2(f_\sharp M, q) \\
 &\ge \theta^2( f\circ \varphi_\sharp \a{B_\sigma}, q)+ \theta^2( f\circ I \circ \varphi_\sharp \a{B_\sigma}, q)\\
 & \ge 2(m+1) \,4\pi.\end{split} \label{unglLiYau}
\end{align}

We remark that in general we could have started working on $N= \QR{M}{\langle I \rangle}$ with the associated varifold $\tilde{f}_\sharp N$ which has density $1$ at most points. But we decided to stick to the oriented double cover $M$ since all theorems in the literature are proven on orientable Riemann surfaces. \\

The following theorem can be considered as the analog of \cite[Theorem~5.2]{KuwertLi} for the non-orientable situation. Our argumentation is inspired by the arguments of Kuwert and Li. 

\begin{theorem}\label{theo:divergimoduli}
Let $K_m$ be a sequence of Klein bottles diverging in moduli space. Then for any sequence of conformal immersions $\tilde{f}_m\in W^{2,2}_{\conf}(K_m, \R^n)$ we have 
\[ \liminf_{m \to \infty} \W(\tilde{f}_m) \ge 8\pi.\]
\end{theorem}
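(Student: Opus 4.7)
The proof adapts the cylinder-collapse strategy of Kuwert and Li (used in \cite[Theorem~5.2]{KuwertLi} for tori) to the Klein-bottle setting, using the extra structure provided by the fixpoint-free antiholomorphic involution on the oriented double cover.

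First, lift to the oriented double cover. Set $f_m = \tilde f_m \circ q_m: M_m \to \R^n$, so $\W(f_m) = 2\, \W(\tilde f_m)$. By Proposition~\ref{biholom} and Theorem~\ref{thminvolution}, each $M_m$ can be normalized to a rectangular torus $M_{r_m} = \QR{\C}{\langle 1, ir_m\rangle}$ with fixpoint-free antiholomorphic involution $I(z) = \bar z + \tfrac12$. Since $K_m$ diverges in the moduli space of Klein bottles, the parameter $r_m$ escapes every compact subset of $(0,\infty)$, and after possibly interchanging the two generators and invoking the second normal form of Theorem~\ref{thminvolution} we may assume $r_m \to \infty$. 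In particular the tori $M_m$ themselves diverge in the moduli space of tori.

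Argue by contradiction: assume $\liminf \W(\tilde f_m) < 8\pi$ and pass to a subsequence with $\W(f_m) \leq 16\pi - 2\delta$ for some $\delta > 0$. The identity $f_m \circ I = f_m$ together with the fixpoint-freeness of $I$ shows that the multiplicity $\theta^2(f_{m\sharp} M_m, \cdot)$ is everywhere even. Combined with the Li--Yau inequality $\theta^2 \leq \W(f_m)/(4\pi) < 4$ for $f_m$ (applied as on page~\pageref{pres} of this manuscript), this forces $\theta^2 \leq 2$, so each $\tilde f_m$ is actually an embedding and $f_{m\sharp} M_m = 2\, \tilde f_{m\sharp} K_m$ as integer rectifiable varifolds.

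It remains to rule out the existence of such embedded Klein bottles diverging in moduli space with uniformly bounded Willmore energy. The torus $M_m$ contains a flat cylindrical region $C_m$ of conformal modulus tending to $\infty$. After normalising $f_m$ by a sequence of M\"obius transformations of $\R^n$, apply the $\varepsilon$-regularity and no-neck analysis of \cite[Proposition~4.1, Theorem~4.1]{KuwertLi} together with Rivi\`ere's bubble extraction \cite{Riviere2,Riviere} to produce a bubble-tree limit. The new input is that $I$ restricts to $C_m$ as an isometry of the flat metric reversing the long direction and satisfying $f_m \circ I = f_m$. Consequently, a concentration point $p_m$ of $|A|^2$ on $C_m$ and its image $I(p_m)$ drift arbitrarily far apart in $M_m$ yet coincide in $\R^n$; the standard rescaling at $p_m$ yields a limit bubble, and the symmetric rescaling at $I(p_m)$ yields a second bubble with the same limit image. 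The combined limit varifold carries density at least $4$ at this common image point, so by lower semicontinuity of the Willmore energy and Li--Yau, $\liminf \W(f_m) \geq 16\pi$, contradicting $\W(f_m) \leq 16\pi - 2\delta$.

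The main obstacle is keeping track of the involution under the normalisations needed to extract a bubble limit: one must choose the M\"obius renormalisations so that the pair of rescaled bubbles at $p_m$ and $I(p_m)$ converge to a single limit varifold in $\R^n$ rather than escaping to infinity in different directions. This can be arranged because $I$ is a flat-metric isometry of $M_m$ independent of $r_m$, and because the relation $f_m \circ I = f_m$ is preserved by any post-composition with M\"obius or rigid motions of $\R^n$.
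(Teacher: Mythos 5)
Your overall framing---lift to the oriented double cover, exploit $f_m\circ I=f_m$ and even multiplicity, and feed into the Kuwert--Li/Rivi\`ere convergence machinery---is the correct starting point, and matches the paper. However, several steps do not hold up and the argument differs from the paper's in ways that introduce genuine gaps.

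\textbf{Normalization of the involution.} You claim that ``after possibly interchanging the two generators and invoking the second normal form'' one can always assume $I(z)=\bar z+\tfrac12$ on a lattice generated by $(1,ir_m)$ with $r_m\to\infty$. This is not correct: for a \emph{fixed} torus $\QR{\C}{\langle 1,ib\rangle}$ with $b>1$ the only biholomorphisms fixing $0$ are $z\mapsto\pm z$ (Lemma~\ref{lemma2}), which cannot conjugate $\bar z+\tfrac12$ to $-\bar z+i\tfrac{b}{2}$. The two involution types give non-isomorphic Klein bottle structures on the same torus, and both families degenerate as $b\to\infty$. The paper's proof keeps both cases and, in fact, Case~1 ($I_m(z)=-\bar z+i\tfrac{b_m}{2}$) admits a cleaner argument (translate to minimize circle diameters without disturbing $I$), while Case~2 requires an entirely different treatment because translations in the long direction do not commute with $I$.

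\textbf{The ``density~$4$'' step.} This is the central gap. After a single conformal renormalization, the rescaled surfaces around $p_m$ and $I(p_m)$ are related by $z\mapsto\bar z$ composed with $f_m\circ I=f_m$, hence they produce \emph{the same} image varifold (only with reversed orientation). The combined contribution to the limit varifold at the common image point is therefore $2\theta$, where $\theta$ is the density of a single bubble; for a simply-covered sphere this is $2$, not $4$, and Li--Yau then only yields $\ge 8\pi$ for the double cover, i.e.\ $\ge 4\pi$ for the Klein bottle---far short of the claim. To reach $16\pi$ you need more: either a branch point so that the local sheet number $m+1\ge 2$ (as in the paper's \eqref{unglLiYau}), or the limit surface must be seen to be a double cover of an $\R P^2$ so that Li--Yau gives $\ge 12\pi$, with the remaining $4\pi$ supplied by the second limit piece after carefully subtracting the $8\pi$ absorbed by an inversion. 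Your argument as written does not establish any of these.

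\textbf{The no-concentration case.} Your reasoning requires ``a concentration point $p_m$ of $|A|^2$,'' but nothing guarantees one exists. If the curvature does not concentrate, there is no bubble to extract, and the density argument has no starting point. The paper's proof explicitly covers this: if $\Sigma_1=\Sigma_2=\emptyset$, the locally uniform limits $h$, $\hat k$ are unbranched, $I$-invariant, and extend across $\pm\infty$; they are therefore double covers of immersed $\R P^2$'s, and Li--Yau gives $\W(h),\W(\hat k)\ge 12\pi$. This structural identification of the limits as $\R P^2$'s is the key insight driving the $16\pi$ bound in the paper, and it is absent from your argument.

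In summary: the correct lower bound does not come from a quadruple point in a bubble limit, but from recognizing that each degenerating half of the Klein bottle must, in the limit, close up to an $\R P^2$ (whose double cover has $\W\ge 12\pi$ by Li--Yau), and from an energy accounting across the two halves of the cylinder after inverting at a point through which the second limit passes with multiplicity $\le 2$, giving $12\pi+(12\pi-8\pi)=16\pi$.
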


\begin{proof}
Let $q_m: T^2_m \to K_m$ be the two sheeted oriented double cover and $I_m: T^2_m \to T^2_m$ the associated antiholomorphic order two deck transformation. By Theorem~\ref{thminvolution} we may assume that $T^2_m=\QR{\C}{\Gamma_m}$, where $\Gamma_m$ is a lattice generated by $(1, ib_m)$ with $b_m \ge 1$ and $I_m$ is given by
\begin{align}\label{eq:invol}
 \text{either } I_m(z)= \bar{z} + \frac12 \text{ or } I_m(z)= - \bar{z} + i \frac{b_m}{2}.
\end{align}
Diverging in moduli space implies $\lim_{m \to \infty} b_m = \infty$.
We lift the maps $\tilde{f}_m$ to the double cover $T^2_m$ and then to $\Gamma_m$-periodic maps from $\C$ into $\R^n$ and denote the lifted maps by $f_m$, i.e.\ $f_m\circ I_m = f_m$. By Gauss-Bonnet we may also assume   that the maps $f_m: \C \to \R^n$ satisfy 
\[ \limsup_{m \to \infty} \frac14 \int_{T^2_m} \abs{A_{f_m}}^2 \, d\mu_{g_m} = \limsup_{m\to \infty} \W(f_m) \le W_0 < \infty. \]
The theorem is proven if we show that 
\begin{equation}\label{eq:16pibound}
\liminf_{m \to \infty} \W(f_m) \ge 16 \pi.
\end{equation}
We have to distinguish two cases. They are determined by the form of the involution. After passing to a subsequence the involution is either of the second kind in \eqref{eq:invol} for all $m$ (Case 1) or it is the involution $I(z)=\bar z + \frac12$ for all $m$ (Case 2).\\
Following the notation of \cite{KuwertLi} we will denote by $I_{p}$ the inversion at $\partial B_1(p)$ in $\R^n$, i.e.\ $I_p(x)=p + \frac{ x-p}{\abs{x-p}^2}$ for $x\in \R^n$. Furthermore, for $\delta \in \R$ we define the translations $\eta_\delta(z):=z + i \delta$ for $z\in\C$.\\
\emph{Case 1:} $I_m(z)= -\bar{z} + i \frac{b_m}{2}$ for all $m$. \\
\emph{Proof of \eqref{eq:16pibound} in Case 1:} The local $L^\infty$-bound of the conformal factor \cite[Corollary~2.2]{KuwertLi} implies that $f_m$ is not constant on any circle $C_v=[0,1]\times\{v\}$. As $f_m\circ I_m = f_m$ we have that $f_m( [0,1] \times [0, \frac{b_m}{2}]) = f_m( [0,1] \times [\frac{b_m}{2}, b_m])$. Thus, there exists $v_m \in [0, \frac{b_m}{2})$ s.t.\ \[ \lambda_m:=\diam ( f_m(C_{v_m})) \le \diam ( f_m(C_{v})) \text{ for all } v \in \R. \]
As already mentioned in Lemma~\ref{lemma3} the involution is not affected by these translations because $\eta_\delta^{-1}\circ I_m \circ \eta_\delta = I_m - 2 \Re( i \delta) = I_m$. Consider the two sequences 
\begin{align*} h_m(z)&= \lambda_m^{-1} \left( f_m\circ\eta_{v_m}(z) - f_m\circ\eta_{v_m}(0) \right) \text{ and } \\
k_m(z)&= \lambda_m^{-1} \left( f_m\circ\eta_{\frac{b_m}{2} + v_m}(z) - f_m\circ\eta_{\frac{b_m}{2} + v_m}(0) \right).
\end{align*}
We have that $1=\diam ( h_m(C_0))= \diam (k_m(C_0))$, $0 =h_m(0)=k_m(0)$ for all $m$ and $h_m(z)=k_m(-\bar{z})$. The immersions $h_m$ and $k_m$ are immersed tori diverging in moduli space. We can therefore repeat the proof of E.~Kuwert and Y.~Li from \cite[Theorem~5.2]{KuwertLi}. We find a suitable inversion $I_{x_0}$ at a  sphere $\partial B_1(x_0)$ and deduce that $\hat{h}_m\defi I_{x_0} \circ h_m$, $\hat{k}_m \defi I_{x_0}\circ k_m$   converge locally uniformly to branched conformal immersions $\hat{h}$ and $\hat{k}$ satisfying $\W(\hat{h}) \ge 8\pi$ and $\W(\hat{k})\ge 8\pi$. Observe that 
\begin{align*}
&\W(f_m)= \W(I_{x_0}\circ f_m)\\
&=\frac14 \int_{[0,1] \times [ v_m - \frac{b_m}{4}, v_m + \frac{b_m}{4}]}  \abs{H_{I_{x_0}\circ f_m}}^2 d\mu_{\hat{g}_m} + \frac14 \int_{[0,1] \times [ v_m + \frac{b_m}{4}, v_m + \frac{3b_m}{4}] } \abs{H_{I_{x_0}\circ f_m}}^2 d\mu_{\hat{g}_m}\\
&= \W\left(\hat{h}_m|_{[0,1]\times[-\frac{b_m}{4}, \frac{b_m}{4}]}\right)+\W\left(\hat{k}_m|_{[0,1]\times[-\frac{b_m}{4}, \frac{b_m}{4}]}\right).
\end{align*}
We pass to the limit and get
\[ \liminf_{m \to \infty} \W(f_m) \ge \liminf_{m \to \infty} \W(\hat{h}_m) + \liminf_{m\to \infty} \W(\hat{k}_m) \ge \W(\hat{h})+\W(\hat{k}) \ge 16 \pi.\]
Note that $\hat h$ and $\hat k$ parametrize the same sphere because of $\hat{h}(z)=\hat{k}(-\bar{z})$. This sphere has a double point as shown in the proof of \cite[Theorem~5.2]{KuwertLi}.\\
\emph{Case 2:} $I_m(z)=\bar{z} + \frac{1}{2}$ for all $m$. \\
\emph{Proof of \eqref{eq:16pibound} in Case 2:} Observe that we cannot translate into ``imaginary direction'' without changing the involution because $\eta_{\delta}^{-1} \circ I \circ \eta_\delta(z) = I(z) - 2 i \delta$. Another delicate point is that the form of the involution does not help to find a ``second'' torus.\\
We fix a large integer $M\in \N$ such that $4\pi M \ge W_0$.\\
For each $m\in \N$ pick $u_m \in \{-M, ..., M \}$ such that
\begin{equation}\label{eq:diamfirstrp2} \lambda_m=\diam(f_m(C_{u_m}))= \min_{u \in\{ -M,...,M\}}\diam(f_m(C_u)).\end{equation}
By passing to a subsequence we may assume that $u_m=u_0$ for all $m$. Furthermore, arguing as in \cite[Propositon~4.1]{KuwertLi} we obtain $B_1(x_1)\subset \R^n$ such that $f_m(T^2_m) \cap B_1(x_1)= \emptyset$ for all $m$. We consider the sequence
\begin{equation}\label{eq:firstrp2} h_m(z) \defi I_{x_1}\left( \lambda_m^{-1}( f_m(z) - f_m\circ\eta_{u_0}(0))\right). \end{equation}
Repeat the procedure and fix $v_m \in \{-M, ..., M \}$ such that
\begin{equation}\label{eq:diamsecondrp2} \mu_m=\diam(h_m(C_{\frac{b_m}{2}+v_m}))= \min_{v \in\{ -M,...,M\}}\diam(h_m(C_{\frac{b_m}{2}+v})).\end{equation}
By passing to a subsequence we may assume $v_m=v_0$ for all $m$ and define
\[ k_m(z) \defi \mu_m^{-1} \left( h_m \circ \eta_{ \frac{b_m}{2}}(z) - h_m \circ \eta_{ \frac{b_m}{2} + v_0}(0)\right).\]
The translations were chosen such that we still have $h_m\circ I=h_m$ and $k_m\circ I = k_m$ for all $m$.
As before we find $x_2 \in \R^n$ with $k_m(T^2_m)\cap B_1(x_2) = \emptyset$ and consider $\hat{k}_m = I_{x_2}(k_m)$.
We have achieved that $h_m(T^2_m) \subset \overline{B_1(x_1)}$ and $\hat{k}_m(T^2_m) \subset \overline{B_1(x_2)}$. Lemma~1.1 from \cite{Simon} implies  area bounds $\mu_{g_m}(T^2_m)\leq C$ for both sequences. Up to a subsequence, we have $\abs{A_{h_m}}^2\, d\mu_{g_m} \to \alpha_1$ and $\abs{A_{\hat{k}_m}}^2\, d\mu_{g_m} \to \alpha_2$ as Radon measures on the cylinder $C=[0,1]\times \R$. The sets $\Sigma_i:=\{z \in \C \colon \alpha_i(\{z\}) \ge 4\pi \}$, $i=1,2$ are discrete. Theorem~5.1 in \cite{KuwertLi} yields that $h_m$ and $\hat{k}_m$ converge locally uniformly on $C \setminus \Sigma_1$ and $C\setminus \Sigma_2$, respectively. The limits either are conformal immersions $h: C \setminus \Sigma_1 \to \R^n$, $\hat{k}: C \setminus \Sigma_2\to\R^n$ or points $p_1, p_2$. Note that by construction
\begin{align}\label{eq:distanceorigine}
h_m(C_{u_0})\subset I_{x_1}(\overline{B_1(0)}) \subset \R^n \setminus B_{\theta_1}(x_1) \text{ with } \theta_1=\frac{1}{1+\abs{x_1}} \\ \nonumber
\hat{k}_m(C_{v_0})\subset I_{x_2}(\overline{B_1(0)}) \subset \R^n \setminus B_{\theta_2}(x_2) \text{ with } \theta_2=\frac{1}{1+\abs{x_2}}
\end{align}
Assume the second alternative holds for $h_m$ i.e.\ $h_m \to p_1$ locally uniformly. Observe that $C_u \cap \Sigma_1 = \emptyset$ for at least one $u_{*}\in\{-M,...,M\}$. Otherwise there would be points $z_u \in C_u$ with $\alpha_1(B_{\frac14}(z_u))> 4\pi$ for each $u \in \{-M, ..., M \}$ contradicting $\sum_{u=-M}^M \alpha_1(B_{\frac{1}{4}}(z_u)) \le \alpha_1(C) \le W_0$. \\
Due to \eqref{eq:distanceorigine} we have $\abs{p_1-x_1} \ge \theta_1> 0$ and hence $I_{x_1}(h_m(C_{u_*}))\to I_{x_1}(p_1)$ uniformly. But $\diam(I_{x_1}(h_m(C_{u_*})))\geq 1$ by \eqref{eq:diamfirstrp2} and \eqref{eq:firstrp2}, a contradiction. In the same way we exclude $\hat{k}_m \to p_2$.\\ 
By uniform local convergence we get 
\begin{align*}
 h & =\lim_{m\to\infty}h_m=\lim_{m\to\infty} h_m\circ I = h\circ I,\\
 \hat k&=\lim_{m\to\infty}\hat k_m=\lim_{m\to\infty} \hat k_m\circ I = \hat k\circ I,
\end{align*}
and we are in the situation of branched $W^{2,2}$-conformal immersions that are invariant under $I$. 
We now investigate the behavior of $h, \hat{k}$ at the ends $\{\pm \infty\}$ of the cylinder $C$. 
We present the argument for $h$, the argument for $\hat{k}$ works analogously. 
We note that $\varphi_+(z):=\frac{-i}{2\pi} \ln(z)$ is a holomorphic chart around $+\infty$ and $I\circ\varphi_+(z)= \frac{i}{2\pi} \ln(\bar{z})+ \frac12$ is an antiholomorphic chart around $-\infty$.  Since $\int_{C} \abs{A_h}^2 \, d\mu_g \le \alpha_1(C)< \infty$ the map $h_{+}(z):=h\circ \varphi_+(z)$ is a $W^{2,2}_{loc}(B_\sigma\setminus\{0\}, \R^n)$-conformal immersion with $\Sigma_1 \cap \varphi_+(B_\sigma\setminus\{0\}) = \emptyset$ for $\sigma>0$ sufficiently small. 
We follow the explanations presented in front of Theorem~\ref{theo:divergimoduli} (p.~\pageref{pres}) to conclude that the varifold ${h_+}_\sharp \a{B_\sigma}$ extends continuously to $0$. This implies that $h(C_v) \to q_1$ for $\nu \to \pm \infty$ using the fact that $I(C_v)=C_{-v}$. 
Furthermore, applying the Li-Yau inequality (a version for branched immersion can be found in \cite[formula~(3.1)]{KuwertLi}) yields the following lower bound of the Willmore energy of $h$. A detailed explanation how we apply this inequality to the oriented double covers was done on page~\pageref{pres}, see (\ref{unglLiYau}). We have that
\begin{align*}
 \W(h) &\ge \theta^2(h_\sharp C, q_1) \\
 &\ge \theta^2( {h_+}_\sharp \a{B_\sigma}, q)+ \theta^2( {h\circ I \circ \varphi_+}_\sharp \a{B_\sigma}, q)\\
 & \ge 2(m(+\infty)+1) \, 4\pi. 
\end{align*}
Thus, if $m(+\infty)\ge 1$ we have that $\W(h) \ge 4 \cdot 4\pi$.
The very same argument applies to $\hat{k}$.\\
Similarly we exclude branch points for the maps $h, \hat{k}$ in the interior  of the cylinder $C$ as follows. Suppose that the application of the classification theorem of isolated singularities \cite[Theorem~3.1]{KuwertLi} to a point $z \in \Sigma_1$ reveals a point with branching order $m(z) \ge 1$ then by \eqref{unglLiYau} we conclude $\W(h)\ge 4 \cdot 4\pi = 16 \pi$. In the same way we can assume that all points $z \in \Sigma_2$ are removable singularities, i.e.\ $m(z)=0$ and $\hat{k}$ has removable singularities in $\pm \infty$. \\
It remains the situation where $h$ and $\hat{k}$ are unbranched. Since $h\circ I =h$, $\hat{k}\circ I =\hat k$ and $h, \hat{k}$ extend smoothly to $\pm \infty$ they are double covers of immersions of $\R P^2$'s into $\R^n$. By the work of Li and Yau \cite{LiYau} we get $\min\{ \W(h), \W(\hat{k})\} \ge 2\cdot 6 \pi= 12 \pi$ as $h$ and $\hat k$ are the oriented double covers of the unoriented surfaces. Recall once more that this also implies that $\# h^{-1}(\{x\})$ and $\# \hat{k}^{-1}(\{x\})$ are even for all $x \in \R^n$. \\
If $\# \hat{k}^{-1}(\{x_1\}) >2 $ the Li-Yau inequality implies $\W(\hat{k}) \ge  4\pi \cdot \# \hat{k}^{-1}(\{x_1\})  \ge 16 \pi$. Otherwise let $C_m:=[0,1]\times [-\frac{b_m}{4}, \frac{b_m}{4}]$. We observe that $k_m= I_{x_2}\circ \hat{k}_m \to I_{x_2}\circ \hat{k}$ and 
\[ \W(h_m) = \W(h_m|_{C_m}) + \W(h_m|_{\eta_{ \frac{b_m}{2}}(C_m)}) = \W(h_m|_{C_m}) + \W(k_m|_{C_m}).\]
With $\#\hat{k}(x_1)\le 2$ we conclude by \cite[Formula~(3.1)]{KuwertLi} that 
\begin{align*} \liminf_{m \to \infty} W(h_m) &\ge \liminf_{m \to \infty} \W(h_m|_{C_m}) + \liminf_{m \to \infty}\W(k_m|_{C_m})\\
&\ge \W(h) + \W(I_{x_1}(\hat{k})) \ge 12 \pi + (12 \pi - 8 \pi) = 16 \pi.
\end{align*}

\end{proof}

\bibliographystyle{plain}
\bibliography{Lit}

\end{document}